\providecommand{\U}[1]{\protect\rule{.1in}{.1in}}
\theoremstyle{plain}
\newtheorem{corollary}{Corollary}
\newtheorem{lemma}{Lemma}
\newtheorem{theorem}{Theorem}
\theoremstyle{remark}
\newtheorem{remark}{Remark}
\numberwithin{equation}{section}
\DeclareMathOperator{\supp}{supp}
\newcommand{\eps}{\varepsilon}
\newcommand{\R}{\ensuremath{\mathbb{R}}}
\begin{document}

\title[]
{Spectral inequalities for Schr\"odinger equations with  various potentials }
\author{ Jiuyi Zhu}
\address{
Department of Mathematics\\
Louisiana State University\\
Baton Rouge, LA 70803, USA\\
Email:  zhu@math.lsu.edu }

\date{}
\subjclass[2010]{35J10, 35P99, 47A11, 93B05.} \keywords {Spectral inequality, Schr\"odinger equations, Carleman
estimates}
%\dedicatory{}

\begin{abstract}
We study the spectral inequalities of  Schr\"odinger operators in the whole space for different potentials, which can be polynomial type growth or  vanishing at infinity. The spectral inequalities quantitatively depend on the density of the sensor sets with positive measure, growth rate of the potentials and  spectrum (or eigenvalues). One important component in the poof is the adaptation of propagation of smallness  argument for  gradients in \cite{LM18}. As an application, we apply the spectral inequalities to obtain quantitative observability inequalities for heat equations.
\end{abstract}

\maketitle
\section{Introduction}

The paper is devoted to the study of spectral inequalities for the Schr\"odinger operator $H=-\triangle +V(x)$ for different types of potentials $V(x)$ in $\mathbb R^n$. The spectral inequality is concerned with the control estimates for a linear combination of eigenfunctions. Let us start by discussing the spectral inequalities in a compact Riemannian manifold. The Laplace eigenfunction $\phi_k$ is given as
\begin{equation}
-\triangle_g \phi_{k} = \lambda_k\phi_{k} \quad \mbox{on} \ \mathcal{M},
\label{classs}
\end{equation}
where $(\mathcal{M},g)$ is a Riemannian manifold and $-\triangle_g$ is the Laplace-Beltrami operator on $\mathcal{M}$. We consider a linear combination of Laplace eigenfunctions $\phi_k$ with $\lambda_k\leq \lambda$ for some $\lambda>0$. That is, $\phi =\sum_{\lambda_k\leq \lambda} \alpha_k\phi_{k}$ for $\alpha_k\in \mathbb R$.
The following sharp version of spectral inequality 
\begin{align}
 \|\phi\|_{L^2(\mathcal{M})}\leq C_0 e^{C_1 \lambda^\frac{1}{2}}  \|\phi\|_{L^2(\Omega)}
 \label{spec-in-1}
\end{align}
was shown in \cite{LR}, \cite{JL}, where $\Omega$ is an open subset, and $C_0$ and $C_1$ depend on $\mathcal{M}$ and $\Omega$.
The spectral inequality  (\ref{spec-in-1})  was used to study the null-controllability problem for the corresponding heat equation in \cite{LR}, the Hausdorff measure of nodal sets for finite sums of eigenfunctions in \cite{JL}, and the null-controllability of thermoelasticity system in \cite{LZ}. See the book \cite{LLR22} for more extensive literature on the proof and applications of the spectral inequality (\ref{spec-in-1}).

In the context of control theory, it is also important to study 
the spectral inequalities for a nonnegative self-adjoint elliptic operator $H$ in $\mathbb R^n$. Then the spectral inequality takes the following form
\begin{align}
 \|\phi\|_{L^2(\mathbb R^n)}\leq C_0 e^{C_1 \lambda^\gamma}  \|\phi\|_{L^2(\Omega)} \quad \text{for any } \phi\in  Ran(P_\lambda(H)),
 \label{spec-in}
\end{align}
with some universal constants $\gamma\in (0,1), C_0>0, C_1>0$, where  $\Omega \subset \mathbb R^n$ is a measurable subset, $P_\lambda(H) = \chi_{(-\infty, \lambda)}(H)$ is the  spectral projection of $H$ in $\mathbb R^n$ and $Ran(P_\lambda(H))$ consists of finite sums of  eigenfunctions.

The spectral inequality (\ref{spec-in}) is in the same spirit of the following sharp doubling inequality of eigenfuntion $\phi_k$ in (\ref{classs})
\begin{equation}\|\phi_{k}\|_{L^2(\mathbb
B_{2r}(x))}\leq  e^{C{\sqrt{\lambda_k}}}\|\phi_{k}\|_{L^2(\mathbb
B_{r}(x))} \quad \text{for all } \mathbb B_{2r}(x)\subset \mathcal{M},
\label{doub}
\end{equation}
which quantitatively controls global information from local data. The inequality (\ref{doub}) was obtained in \cite{DF}  for Laplace eigenfunctions in (\ref{classs}), where $C$ depends only on $\mathcal{M}$. The doubling inequality (\ref{doub}) and its variants play an essential role in the study of the measure of nodal sets, see e.g. \cite{DF}, \cite{Lin}, \cite{L18} and the study of quantitative unique continuation properties, see e.g. \cite{Zh1}, \cite{K07}.

We study the spectral inequalities for the Schr\"odinger operator $H=-\triangle +V(x)$ in $ \R^n$ for potential $V(x)$ with different properties and $n\geq 1$. If
$\lim_{|x|\to \infty} V(x)=+\infty$, the inverse operator $H^{-1}$ is compact in $L^2(\R^n)$. Hence the spectrum of $H$ are discrete (called eigenvalues) with finite multiplicities and  $\lim_{k\to \infty} \lambda_k=\infty$. That is, 
$\phi_k$ is the eigenfunction of $H$ corresponding to the eigenvalue $\lambda_k$ satisifying
\begin{align}
-\triangle \phi_k +V(x)\phi_k=\lambda_k \phi_k \quad \text{ in } \mathbb{R}^n.
\label{eigen-k}
\end{align}
For $\phi\in Ran(P_\lambda(H))$, we can write
\begin{align}
\phi=\sum_{\lambda_k\leq \lambda} \alpha_k \phi_k, \quad \mbox{with}\ \ \alpha_k=\langle \phi_k, \phi\rangle.
\label{phi-s}
\end{align}
Note that $\{ \phi_k: \lambda_k\leq \lambda\}$ forms an orthogonal basis of $ Ran(P_\lambda(H))$. If the potential $V(x)$ grows as polynomials at infinity, the eigenfunctions are well localized and decaying exponentially. 

The spectral inequality (\ref{spec-in}) relies on the potential $V(x)$ and a certain given measurable set $\Omega$.
A measurable {sensor set} $\Omega \subset \R^n$ is called {efficient} if the spectral inequality \eqref{spec-in} holds with some $ \gamma\in (0,1)$. It was shown in e.g., \cite[Theorem 2.8] {NTTV1} that an efficient sensor set would guarantee the null-controllability for the corresponding heat equation in $\R^n$. 
Denote by $\Lambda_L(j)=j+(-\frac{L}{2}, \frac{L}{2})^n$ the cube with side length $L>0$ for $j=(j_1, j_2, \cdots, j_n)\in \mathbb Z^n$ and $\mathcal{B}_r(z)$ the ball centered at $z$ with radius $r$ in $\mathbb R^n$. $\mathcal{B}_r$ is denoted as the ball centered at origin with radius $r$. Let $\delta \in (0,1)$ and $\sigma\in [0,1)$. 
We introduce one type of the sensor sets $\Omega$ as
\begin{align}
   \mathcal{B}_{\delta^{1+|j|^{\sigma}}L}(z_j)\subset \Omega \cap \Lambda_L(j)
   \label{sensor-open}
\end{align}
for all $j\in \mathbb{Z}^n$ and  well distributed sequence $\{z_j\}$. A simple example of this type of sensor sets is
 \begin{align}
 \Omega=\bigcup_{j\in \mathbb Z^n} \mathcal{B}_{2^{-(1+|j|^\sigma)}}(j)
 \label{example}
 \end{align}
 for $\delta=\frac{1}{2}$ and $L=1$.
 If $H=-\triangle +V(x)$ with $V(x)\in C_{loc}^{0,1}(\mathbb R^n)$ satisfying the assumptions $$c_1(|x|-1)^{\beta_1}\leq V(x)+|DV|\leq c_2(|x|+1)^{\beta_2}$$ for some positive constants $c_1, c_2$ and $\beta_1\leq \beta_2$, we proved in \cite{ZZ23} the following spectral inequality 
\begin{align}
\|\phi\|_{L^2(\mathbb R^n)}\leq (\frac{1}{\delta})^{C\lambda^{\frac{\sigma}{\beta_1}+\frac{\beta_1}{2\beta_2}}} \|\phi\|_{L^2(\Omega)} \quad \text{for all } \phi \in Ran(P_\lambda(H))
\label{old-zz}
\end{align}
on the sensor sets $\Omega$ in (\ref{sensor-open}). We developed some new quantitative Carleman estimates that make use of the gradients of the potential $V(x)$. It turns out that sensor sets $\Omega$ in (\ref{sensor-open}) is efficient if 
$\frac{\sigma}{\beta_1}+\frac{\beta_1}{2\beta_2}<1$. In particular, if $V(x)=|x|^{\beta_1}$ for $\beta_1>0$, the sharp spectral inequality is shown
\begin{align}
\|\phi\|_{L^2(\mathbb R^n)}\leq (\frac{1}{\delta})^{C\lambda^{\frac{\sigma}{\beta_1}+\frac{1}{2}}} \|\phi\|_{L^2(\Omega)} \quad \text{for all } \phi \in Ran(P_\lambda(H)),
\label{sharp-spe}
\end{align}
which answered an open question in \cite{DSV} where a sub-optimal constant $(\frac{1}{\delta})^{C\lambda^{\frac{\sigma}{\beta_1}+\frac{\beta_1}{2\beta_2}}}$ in (\ref{old-zz}) was obtained. Note that $V(x)=|x|^{\beta_1}$ for $0<\beta_1<1$ is not Lipschitz continuous at origin, but its $L^\infty$ norm is bounded near the origin.

Instead of containing a ball in each $\Lambda_L(j)$, we study the sensor sets $\Omega$ with  positive measure 
\begin{align}
\frac{|\Omega\cap \Lambda_L(j) |}{|\Lambda_L(j)|}\geq \delta^{1+|j|^\sigma}
\label{geom-om}
\end{align}
for all $j\in \mathbb Z^n$ and $\sigma\in [0, 1)$ in this paper, where $|\cdot|$ denotes the Lebesgue measure. Without loss of generality, we may assume $L=1$. Note that the volume of sensor sets $\Omega$ in (\ref{geom-om}) is bounded, because of the presence of $\sigma>0$. If $\sigma =0$, the sensor sets are called thick sets, which has been used previously to study spectral inequalities for harmonic oscillator $H = -\triangle + |x|^2$ in e.g. \cite{MPS,BJP}.  The spectral inequality (\ref{spec-in}) was proved in \cite{DSV1} for harmonic oscillator  with $\gamma = \frac{\sigma}{2} + \frac12$.
These proofs rely on the real analyticity of $\phi_k$ and complex analysis estimates (i.e. Bernstein-type inequality). Later on, some similar  spectral inequalities \label{sharp-spe} were shown in \cite{AS} and \cite{M22}
for the Schr\"odinger operator $H=-\triangle +|x|^{\beta_1}$ with even integers $\beta_1\in 2\mathbb N$, which relies on the real analyticity of $\phi_k$ and analyticity arguments as well. 

In this paper, we aim to study the spectral inequality (\ref{spec-in}) under some general assumption of the potential $V(x)$ on the sensor sets (\ref{geom-om}) without using the analyticity of $\phi_k$.
Let us state our assumption on $V$. 

\textbf{Assumption (A)}: Assume the real-valued function  $V\in L^\infty_{\rm loc}(\R^n)$ and $V\in C^{0, 1}_{\rm loc}(\R^n\backslash \mathcal{B}_R)$ for some $R>0$. Furthermore,  $V$ satisfies the following two conditions:
\begin{itemize}
    \item There exist positive constants $c_1$, $C^0$, $\beta_1$ such that for all $x\in \R^n$,
    \begin{equation}\label{V.growth}
    c_1|x|^{\beta_1} -C^0\le V(x).
    \end{equation}
    \item  there exist positive constants $c_2$ and $\beta_2\geq \beta_1$ such that 
    \begin{equation}\label{V.bound}
    |V(x)| + |D V(x)| \le c_2(|x| + 1)^{\beta_2}.
\end{equation}
\end{itemize}

Under the assumption of (A), we are able to show the following theorem.
\begin{theorem}
Let $H=-\triangle +V(x)$. Assume  that $V$ satisfies Assumption (A) and $\Omega$ satisfies (\ref{geom-om}) with $L=1, \sigma\in [0, 1)$ and $\delta\in (0, \frac{1}{2})$. Then there exists a constant $C$ depending only on $\beta_1, \beta_2, C^0, c_1, c_2, R, \sigma$ and $n$ such that for $\lambda\geq 1$,
\begin{align}
\|\phi\|_{L^2(\mathbb R^n)}\leq (\frac{1}{\delta})^{C\lambda^{\frac{\sigma}{\beta_1}+\frac{\beta_2}{2\beta_1}}} \|\phi\|_{L^2(\Omega)} \quad \text{for all } \phi\in Ran(P_\lambda(H)).
\label{aim-res}
\end{align}
\label{th1}
\end{theorem}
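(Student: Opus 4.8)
The plan is to combine three ingredients: (i) a Carleman-type elliptic estimate / propagation-of-smallness argument for the eigenfunction equation that gives an $L^2$-to-$L^2$ interpolation across scales, (ii) a reduction from a positive-measure sensor set to a ball-containing sensor set via the Logunov–Malinnikova propagation of smallness for gradients of solutions of elliptic PDEs (the result of \cite{LM18} flagged in the abstract), and (iii) a summation over the lattice cubes $\Lambda_1(j)$ that tracks the growth of the constant in $|j|$ and converts it into the factor $\lambda^{\sigma/\beta_1}$. Concretely, I would first lift the finite sum $\phi = \sum_{\lambda_k \le \lambda} \alpha_k \phi_k$ to a solution $u(x,y)$ of an auxiliary elliptic equation in $\R^{n+1}$ — the standard Lebeau–Robbiano "ghost variable" trick, setting $u(x,y) = \sum_k \alpha_k \phi_k(x)\,\frac{\sinh(\sqrt{\lambda_k}\,y)}{\sqrt{\lambda_k}}$ or a cosh-variant — so that $u$ solves $-\Delta_x u + V(x)u - \partial_y^2 u = 0$ and $\partial_y u|_{y=0} = \phi$. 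This replaces "finite linear combination with spectral parameter $\lambda$" by a genuine second-order elliptic equation whose coefficient $V$ obeys Assumption (A), at the cost of carrying the scale $e^{C\sqrt\lambda}$ in the $y$-direction.

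**Next**, on each cube $\Lambda_1(j)$ I would run a three-ball / Carleman inequality for the operator $-\Delta + V$ (or its $(n+1)$-dimensional lift), exactly in the spirit of the Carleman estimates "that make use of the gradients of $V$" from \cite{ZZ23}, to bound $\|\phi\|_{L^2(\Lambda_1(j))}$ by a quantity of the form $\|\phi\|_{L^2(\omega_j)}^{\theta}\,\|\phi\|_{L^2(\tilde\Lambda_j)}^{1-\theta}$ where $\omega_j$ is any open ball inside $\Lambda_1(j)$ and $\tilde\Lambda_j$ a slightly enlarged cube; the exponent $\theta$ and the multiplicative constant degrade like $e^{C(|j|^{\beta_1/2} + \sqrt\lambda)}$ because the zeroth-order coefficient on $\Lambda_1(j)$ has size $\sim |j|^{\beta_1}$ by \eqref{V.bound}. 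To pass from "a ball $\omega_j$" to "the positive-measure piece $\Omega \cap \Lambda_1(j)$ of relative measure $\ge \delta^{1+|j|^\sigma}$" I invoke the propagation-of-smallness-for-gradients result of \cite{LM18}: it yields $\|\phi\|_{L^2(\omega_j)} \le C\,\|\phi\|_{L^2(\Omega\cap\Lambda_1(j))}^{\kappa}\,\|\phi\|_{L^2(\tilde\Lambda_j)}^{1-\kappa}$ with $\kappa$ a power of $\delta^{1+|j|^\sigma}$ (the dependence on the doubling index of $\phi$ on $\Lambda_1(j)$, which is again controlled by $|j|^{\beta_1/2} + \sqrt\lambda$ via a local order-of-vanishing / frequency-function estimate). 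Chaining these two interpolation inequalities gives, on each cube, a bound of $\|\phi\|_{L^2(\Lambda_1(j))}$ in terms of $\|\phi\|_{L^2(\Omega\cap\Lambda_1(j))}$ and a Neumann/larger-neighborhood term, with constants of the shape $(1/\delta)^{C(1+|j|^\sigma)(|j|^{\beta_1/2}+\sqrt\lambda)}$.

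**Then** comes the global patching. One squares and sums over $j \in \Z^n$, absorbing the "larger neighborhood" terms by finite overlap; to close the estimate and remove the right-hand $\R^n$-norm, I use that the eigenfunctions decay: for $|x| \gtrsim \lambda^{1/\beta_1}$ the potential $V(x) - \lambda_k \gtrsim |x|^{\beta_1}$ is positive and large, so an Agmon-type estimate forces $\|\phi\|_{L^2(|x| > A\lambda^{1/\beta_1})}$ to be exponentially small compared to $\|\phi\|_{L^2(\R^n)}$. Hence only the cubes with $|j| \lesssim \lambda^{1/\beta_1}$ matter, on which $|j|^{\beta_1/2} \lesssim \lambda^{1/2}$ and $|j|^\sigma \lesssim \lambda^{\sigma/\beta_1}$; the worst constant across these cubes is therefore $(1/\delta)^{C\lambda^{\sigma/\beta_1}\cdot C\lambda^{1/2}}$ — wait, that is too large, so the bookkeeping must be done more carefully: the $|j|^\sigma$ from the sensor geometry and the $\sqrt\lambda$ from the spectral scale should enter \emph{additively in the exponent} $(1/\delta)^{C(|j|^\sigma + \sqrt\lambda)}$ after optimizing the interpolation exponents (the classical mechanism by which $\sigma=0$ gives $\gamma = 1/2$), and then $\max_{|j|\lesssim \lambda^{1/\beta_1}}|j|^\sigma \sim \lambda^{\sigma/\beta_1}$ yields precisely the target exponent $\lambda^{\sigma/\beta_1 + 1/2}$ in \eqref{aim-res}.

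**The main obstacle** I anticipate is making this last additivity precise and uniform in $j$: one must run the Carleman estimate with a parameter calibrated so that the \emph{same} large parameter simultaneously handles the spectral scale $\sqrt\lambda$ and the cube-dependent coefficient size $|j|^{\beta_1/2}$, while the propagation-of-smallness-for-gradients step contributes its $\delta$-power with an exponent that is only polynomially bad in the local doubling index — and the local doubling index of $\phi$ on $\Lambda_1(j)$ must be shown to be $O(|j|^{\beta_1/2} + \lambda^{1/2})$ rather than something worse. This local frequency/doubling bound for finite spectral sums (not single eigenfunctions) under Assumption (A), together with the careful tuning that avoids multiplying $\lambda^{\sigma/\beta_1}$ by $\lambda^{1/2}$ instead of adding them, is where the real work lies; the rest (Agmon decay, lattice summation, the ghost-variable lift) is standard and will follow the template of \cite{ZZ23} and \cite{DSV1}.
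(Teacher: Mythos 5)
Your plan correctly identifies the ghost--variable lift, the lattice summation, the Agmon--type decay, and the use of the Logunov--Malinnikova propagation--of--smallness--for--gradients result from \cite{LM18}. But there are two concrete gaps, one structural and one arithmetic, that keep this outline from being a proof of Theorem~\ref{th1}.

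\textbf{Structural gap: you cannot apply \cite{LM18} to $\phi$ or $\hat\Phi$ as written.} The propagation--of--smallness result you cite (Lemma~5.3 of \cite{LM18}) is stated for solutions of $\operatorname{div}(A\nabla W)=0$ with Lipschitz leading coefficients and \emph{no} lower--order terms. After the Lebeau--Robbiano lift, $\hat\Phi$ satisfies $-\Delta\hat\Phi + V(x)\hat\Phi = 0$ in $\R^{n+1}$, which still carries the zeroth--order coefficient $V$; and $\phi$ itself satisfies an eigenvalue equation, not a divergence--form equation. Your proposed chain (Carleman gives cube $\to$ ball, then \cite{LM18} gives ball $\to$ measurable set) never explains where the required divergence--form, lower--order--free equation comes from. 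The paper's actual proof entirely avoids a Carleman step (it says explicitly that its strategy differs from the Carleman approach of \cite{ZZ23}, \cite{DSV}) and instead performs additional exponential lifts $e^{\sqrt{C_0}y}$, $e^{i(\|V\|_{C^{0,1}(\omega)}+C_0)^{1/2}t}$ to absorb $V$ into the \emph{leading} coefficients of a second--order divergence--form operator in $\R^{n+3}$, and near the origin (where $V\in L^\infty$ may fail to be Lipschitz, e.g.\ $V(x)=|x|^{\beta_1}$ for $0<\beta_1<1$) it divides by a positive auxiliary solution $w$ of $-\Delta w + (V+C_0)w=0$ built by sub/super--solutions, yielding $\operatorname{div}(w^2\nabla\bar\Phi)=0$. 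It then applies \cite{LM18} to the \emph{gradient} of that lifted solution, whose restriction to $\{s=0\}$ is precisely $\phi$ up to unit--modulus factors; the sensor set $\Omega$ appears as a positive--measure subset of the hyperplane $\{s=0\}$. This ``incorporate $V$ into the leading coefficients'' device is the heart of the argument and is absent from your proposal.

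\textbf{Arithmetic gap in the final bookkeeping.} You compute the worst constant over $|j|\lesssim\lambda^{1/\beta_1}$ as $(1/\delta)^{C\lambda^{\sigma/\beta_1}\cdot C\lambda^{1/2}}$, declare it ``too large,'' and then replace the multiplicative structure by an additive one $(1/\delta)^{C(|j|^\sigma+\sqrt\lambda)}$. But $\lambda^{\sigma/\beta_1}\cdot\lambda^{1/2}=\lambda^{\sigma/\beta_1+1/2}$, so the multiplicative expression is \emph{exactly} the target exponent in \eqref{aim-res}; it is not too large. Meanwhile the additive version you substitute gives $\lambda^{\sigma/\beta_1}+\lambda^{1/2}$, which for $\sigma>0$ is strictly smaller than $\lambda^{\sigma/\beta_1+1/2}$ and does \emph{not} reproduce the claimed bound, contradicting your own assertion that it ``yields precisely the target exponent.'' In the paper's proof the $\delta$--power $(1/\delta)^{1+|j|^\sigma}$ enters through $\gamma^{-1}-1\sim(1+|j|^\sigma)\ln(1/\delta)$ from Lemma~\ref{lemm-5}, and the $\lambda^{1/2}$ enters through the doubling estimate~\eqref{kao-niu}; chaining them gives $e^{C\sqrt\lambda(1-\gamma)/\gamma}\le(1/\delta)^{C\lambda^{1/2}\lambda^{\sigma/\beta_1}}$. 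So the two scales really do multiply, and that is consistent with the statement. Your ``careful tuning'' step is chasing a phantom problem and lands on an incorrect formula.
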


%Throughout, we will use the notations $D_i = \frac{\partial }{\partial x_i}$ for partial derivatives and $D = (D_1,D_2,\cdots)$ for gradients.  
Obviously, our assumption (A) includes the particular case $V(x) = |x|^{\beta_1}$ for any $\beta_1>0$, and requires much less regularity assumptions of $V$ than the previous literature.
%This flexible assumption is sufficient for many applications, particularly 
%We should point out that the exponent $\frac43$ for $V_2$ in \eqref{V.bound} is consistent with literature; see \cite{NTTV,DSV}.
Our proof is different from the strategy in e.g. \cite{LR}, \cite{JL}, \cite{DSV}, \cite{ZZ23}, which apply certain local and global Carleman estimates, and is not the same as  e.g. \cite{DSV1}, \cite{AS} and \cite{M22}, which rely on analyticity of $\phi_k$ and Bernstein inequalities. We make use of the propagation of smallness arguments for the gradients on the sets of positive measure in \cite{LM18}. This idea has been used in \cite{BM21} for spectral inequality on the sets with positive measure on a compact manifold $\mathcal{M}$. See also \cite{BM21} for spectral inequalities in $\mathbb R^n$ without the potential $V(x)$. Our new difficulty is to take care of the presence of growing potential $V(x)$. Especially, $V(x)$ grows to infinity as $|x|\to \infty$. To overcome the difficulty, we apply the idea of exponential decay of eigenfunctions to show a doubling type inequality for $\phi$ in a large ball depending on $\lambda$. The assumption (A) includes the interesting case of polynomial growth potential $V(x)=|x|^{\beta_1}$ for $0<\beta_1<1$. However, it's Lipschitz norm blows up at the origin. We introduce a new way to tackle the singularity of Lipschitz norm in any local region and are still able to construct a second elliptic equation with Lispchitz leading coefficients. Then we incorporate the potential $V(x)$ into the leading coefficients of a second order elliptic equation without lower order terms. We obtain the propagation of smallness for the gradient for the elliptic equation in each cubes as the leading coefficients of the new second order elliptic equations will be Lipschtiz continuous and  well behaved in the cubes. We sum up the estimates on each cube to have the global estimates.

%An interesting case of polynomial growth potential is $V(x)=|x|^{\beta_1}$ for $0<\beta_1<1$. In this case, $V(x)=|x|^{\beta_1}$ satisfies the Assumption (A) for $|x|\geq 1$. However, it's Lipschitz norm blows up at the origin. We introduce a new way to tackle the singularity and are still able to construct a second elliptic equation with Lispchitz leading coefficients. Using the strategy in the proof of Theorem \ref{th1}, we can show that
%\begin{theorem}
%Let $H=-\triangle +|x|^{\beta_1}$ for $\beta_1>0$. Assume that $\Omega$ satisfies (\ref{geom-om}) with $L=1, \sigma\in [0, 1)$ and $\delta\in (0, \frac{1}{2})$. Then there exists a constant $C$ depending only on $\beta_1, \sigma$ and $n$ such that for $\lambda\geq 1$,
%\begin{align}
%\|\phi\|_{L^2(\mathbb R^n)}\leq (\frac{1}{\delta})^{C\lambda^{\frac{\sigma}{\beta_1}+\frac{1}{2}}} \|\phi\|_{L^2(\Omega)} \quad \text{for all } \phi\in Ran(P_\lambda(H)).
%\label{second-type}
%\end{align}
%\label{th1-1}
%\end{theorem}

%\begin{remark}
%Note that the strategy in Theorem \ref{th1-1} is used to tackle the failure of Lipscthiz norm  for $V(x)$ in any compact subset in $\mathbb R^n$. Combining the ideas in the proof of Theorem \ref{th1}) and Theorem  \ref{th1-1}, we can relax the Assumption (A) for $V(x)$. 
 %   Instead of having $V\in C^{0, 1}_{\rm loc}(\R^n)$, we can assume For such a $V(x)$, under the assumptions of (\ref{V.growth}), (\ref{V.bound})  and $\Omega$ in (\ref{geom-om}), the spectral inequality (\ref{aim-res}) holds.
%\end{remark}

In the second part of the paper, we study the spectral inequalities for 
\begin{align}
-\triangle \phi+V(x)\phi=\lambda \phi \quad \mbox{in} \quad \mathbb R^n
\end{align}
with a bounded potential $V(x)$. 
The assumption of the potential $V(x)$ is stated in the following.

\textbf{Assumption (B)}: Assume that the real-valued function  $V(x)$ is bounded and $\lim_{|x|\to \infty} V(x)=0$.
Thus, for some constant $C^0$, we have
\begin{align}
\|V\|_{L^\infty}\leq C^0.
\end{align}

Unlike the sensor sets assumption in (\ref{geom-om}), we study the measurable sensor sets $\Omega\subset \mathbb R^n$ satisfying the following property:
\begin{align}
\frac{|\Omega\cap \Lambda_L(j) |}{|\Lambda_L(j)|}\geq \delta
\label{geom-om-2}
\end{align}
for all $j\in \mathbb{Z}^n$. The sensor sets in (\ref{geom-om-2}) can be deduced from  (\ref{geom-om}) with $\sigma=0$. It is called the thick sets as $\Omega$ has infinite volume. 

The spectrum of $H=-\triangle +V(x)$ in Assumption (B) behaves in a different way from the polynomial growth potential $V(x)$.
The spectrum of $H=-\triangle +V(x)$ is of the form 
\begin{align}
    \sigma(H)=\sigma_{disc}(H) \cup \sigma_{ess}(H),
    \label{spec-c}
\end{align}
where $\sigma_{ess}(H)=[0, \infty)$ is the essential spectrum, $\sigma_{disc}(H)$ is the discrete eigenvalues which is of finite number and lies in $(-\infty, 0)$. Note the condition $\lim_{|x|\to \infty} V(x)=0$ in Assumption (B) is not used in the proof of Theorem \ref{th2} below. It is only used for the characterization of the spectrum in (\ref{spec-c}).
Since $V(x)$ is bounded with a lower bound, then there exists a positive constant $E_0$ such that $\inf\sigma(H)\geq -E_0$. See e.g. \cite{K18}. We are able to show the following spectral inequalities.
\begin{theorem}
Assume  that $V$ satisfies Assumption (B) and $\Omega$ satisfies (\ref{geom-om-2}) with $L=1$ and $\delta\in (0, \frac{1}{2})$. For any $f\in L^2(\mathbb R^n)$, there exists a constant $C$ depending only on $C^0$ and $n$ such that
\begin{align}
\|\mathbb{I}_{\mu}(f)\|_{L^2(\mathbb R^n)}\leq (\frac{1}{\delta})^{C(\sqrt{|\mu|}+1)} \|\mathbb{I}_{\mu}(f)\|_{L^2(\Omega)},
\label{spectral-conti}
\end{align}
where the spectral projection $\mathbb{I}_{\mu}(f)$ is given in (\ref{proj-mu}).
\label{th2}
\end{theorem}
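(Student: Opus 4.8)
The plan is to reduce the continuous spectral inequality for the bounded potential of Assumption (B) to the setting already handled in Theorem \ref{th1}, exploiting the fact that a bounded potential is a harmless perturbation that can be absorbed either into the spectral parameter or into a standard elliptic inequality. First I would make precise the definition of $\mathbb{I}_\mu(f)$: writing $\{E_t\}$ for the spectral resolution of the self-adjoint operator $H = -\triangle + V$, set $\mathbb{I}_\mu(f) = E_\mu f$ (or $\chi_{(-\infty,\mu)}(H) f$), so that $u := \mathbb{I}_\mu(f)$ lies in the range of the spectral projector up to level $\mu$. The key structural point is that, unlike the power-growth case, the function $u$ need not be a finite sum of genuine eigenfunctions, since $\sigma_{\mathrm{ess}}(H) = [0,\infty)$; but it still satisfies the spectral localization $\|H u\|_{L^2} \le |\mu|\,\|u\|_{L^2}$ together with, more usefully, the estimate $\| (H+E_0+1)^{1/2} u\|_{L^2}^2 = \int_{-E_0}^{\mu}(t+E_0+1)\,d\|E_t u\|^2 \le (|\mu|+E_0+1)\|u\|_{L^2}^2$. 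Equivalently, using $\|Vu\|_\infty \le C^0\|u\|$ fails pointwise, but in $L^2$ one has $\|\triangle u\|_{L^2}\le (|\mu| + C^0)\|u\|_{L^2}$, which is the Sobolev-norm control that drives everything.

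Next I would set up a Fourier–Bernstein / propagation-of-smallness argument directly on $u$, mirroring the structure used for Theorem \ref{th1} but now on the thick sets \eqref{geom-om-2}. The cleanest route is an approximation/frequency-cutoff step: because $\widehat{Hu} = |\xi|^2 \hat u + \widehat{Vu}$ and $Hu$ has spectral support in $(-\infty,\mu)$, while $V$ is merely bounded (hence does not compactify frequency), one should instead argue via the elliptic equation $-\triangle u = (\mu - V)u - (H-\mu)u$ restricted to the portion $E_{(-\infty,\mu)}$, or—more robustly—use the Gearhart–Prüss/telescoping idea: approximate $u$ by $u_N := \chi_{(-\infty,\mu)}(-\triangle)\,u$ (low-frequency truncation in $|\xi|^2 \le \mu$ when $\mu>0$), show $\|u-u_N\|$ is controlled, and apply to $u_N$ the Logunov–Malinnikova propagation of smallness for gradients on positive-measure sets in each cube $\Lambda_1(j)$ exactly as in the proof of Theorem \ref{th1}. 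In each unit cube, $u_N$ solves an elliptic equation with the bounded potential folded into zeroth-order terms of size $O(|\mu|+C^0)$; the doubling/three-ball constant on $\Lambda_1(j)$ is therefore $e^{C(\sqrt{|\mu|}+1)}$ uniformly in $j$ (no growth in $|j|$, since $\delta$ does not degrade), and summing over $j\in\mathbb{Z}^n$ with the thickness hypothesis $|\Omega\cap\Lambda_1(j)|\ge\delta$ yields $\|u_N\|_{L^2(\mathbb{R}^n)} \le (1/\delta)^{C(\sqrt{|\mu|}+1)}\|u_N\|_{L^2(\Omega)}$. Propagating the truncation error back gives the claimed bound for $u = \mathbb{I}_\mu(f)$.

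A subtle point to get right is the direction and handling of the truncation when $\mu < 0$ (only finitely many discrete eigenvalues below $0$): there $u$ is genuinely a finite linear combination of exponentially decaying bound states, and the argument simplifies — one may invoke the doubling inequality of Theorem \ref{th1}-type directly, or even note that for $\mu<0$ the factor $(\sqrt{|\mu|}+1)$ is harmless and a single large ball plus interior elliptic estimates suffice. For $\mu\ge 0$ one genuinely needs the frequency picture: the operator $H$ and $-\triangle$ have the \emph{same} essential spectrum $[0,\infty)$, and the perturbation $V\to 0$ at infinity is relatively compact, so $\chi_{(-\infty,\mu)}(H)$ and $\chi_{(-\infty,\mu)}(-\triangle)$ differ by a compact operator; quantifying this difference in operator norm on $\mathrm{Ran}\,\mathbb{I}_\mu$ is the technical crux. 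The main obstacle I anticipate is precisely this comparison step — controlling $\|(\,\chi_{(-\infty,\mu)}(H)-\chi_{(-\infty,\mu)}(-\triangle)\,)u\|$ with a constant of the form $e^{C(\sqrt{|\mu|}+1)}$, uniformly in $f$ — because naively a spectral-gap argument would need a gap of $H$ at $\mu$, which is absent in the essential spectrum. I would resolve it by working with smoothed spectral projectors $\psi(H/\mu)$ for a fixed bump $\psi$ and using the Helffer–Sjöstrand formula together with the resolvent bound $\|(H-z)^{-1}-(-\triangle-z)^{-1}\| \lesssim \|V\|_\infty |\mathrm{Im}\,z|^{-2}$, absorbing the loss into the already-present exponential factor; alternatively, one bypasses the comparison entirely by running the Logunov–Malinnikova machinery on $u$ itself after a single Calderón–Zygmund / Caccioppoli estimate that converts the $L^2$ bound on $\triangle u$ into the local gradient-growth input the propagation-of-smallness lemma requires, which is probably the shortest path.
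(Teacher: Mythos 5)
The central device your proposal is missing is the ghost-dimension lifting. The paper does not work with $u=\mathbb{I}_\mu(f)$ directly; it builds
\[
\check f(x,s)=\int_{-\infty}^{\mu}\mathcal{S}_\lambda(s)\,dP_\lambda f,
\]
which is an honest solution of the homogeneous elliptic equation $-\triangle\check f-\partial_{ss}\check f+V\check f=0$ in $\mathbb R^{n+1}$, with $\partial_s\check f(\cdot,0)=\mathbb{I}_\mu(f)$. This is precisely what makes the Logunov--Malinnikova propagation-of-smallness lemma applicable: that lemma requires $\nabla W$ for $W$ solving a divergence-form second-order elliptic equation with Lipschitz coefficients, not a function that merely satisfies a spectral-localization bound such as $\|\triangle u\|_{L^2}\le(|\mu|+C^0)\|u\|_{L^2}$. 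Your ``bypass'' --- run Logunov--Malinnikova on $u$ itself after a Caccioppoli estimate --- does not go through, because $u$ only solves the inhomogeneous equation $-\triangle u+Vu=Hu$, and $Hu$ is an unknown function that is spectrally, not spatially, localized; there is no monotone doubling index to exploit.

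Your other proposed route, comparing $\chi_{(-\infty,\mu)}(H)$ to $\chi_{(-\infty,\mu)}(-\triangle)$ via relative compactness and Helffer--Sj\"ostrand, is not what the paper does and faces the exact obstruction you flag yourself: there is no spectral gap at $\mu$ in $[0,\infty)$, and the resolvent-difference bound degrades as $|\mathrm{Im}\,z|^{-2}$ in a way you do not show how to control. The paper avoids this entirely. After the ghost-dimension lift, it eliminates the zeroth-order term by an explicit conjugation: a further exponential lift $\tilde f=e^{\sqrt{C_0}t}\check f$ turns $V$ into the nonnegative potential $C_0+V$; then a positive solution $w$ of $-\triangle w+(C_0+V)w=0$ in each block $\mathcal{B}_{30}(j)\times(-2,2)\times(-2,2)$, produced by a sub/super-solution construction and uniformly bounded above and below, converts the equation for $f_1=\tilde f/w$ into the pure divergence form $\mathrm{div}(w^2\nabla f_1)=0$ with Lipschitz coefficient $w^2$. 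After two more auxiliary variables (to control the norm of $w$), the propagation-of-smallness estimate for gradients (Lemma~\ref{lemm-5}) gives a three-ball inequality on each unit cube with a uniform constant, and summing over $j\in\mathbb Z^n$ with the thickness hypothesis, together with Lemma~\ref{lem-com} relating $\|\check f\|_{H^1}$ to $\|\mathbb{I}_\mu(f)\|_{L^2}$, yields the result. None of the lifting, the $w$-conjugation, or the $H^1$/$L^2$ comparison lemma appears in your proposal, so as written it does not constitute a proof.
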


By assuming $V(x)$ is analytic and has some decay estimates at infinity for the holomorphical extension of $V(x)$, a similar result as (\ref{spectral-conti}) was obtained in \cite{LeMo19}, which relies on complex analysis techniques. Compared with \cite{LeMo19}, we get rid of the analyticity assumptions for $V(x)$ and require much weaker assumptions for $V(x)$ in Theorem \ref{th2}. Our proof is based on 
the consequence of propagation of smallness in \cite{LM18} and the strategy in the proof of Theorem \ref{th1}.

The spectral inequality (\ref{spectral-conti}) is closely connected to the uncertainty principle, which says that a function can not be localized both in space and in the frequency variable. 
Under the condition (\ref{geom-om-2}), the Logvenenko-Sereda inequality states that, for any $g\in L^2(\mathbb R^n)$,
\begin{align}
\|g\|_{L^2(\mathbb R^n)}\leq (\frac{1}{\delta})^{C(\sqrt{\mu}+1)} \|g\|_{L^2(\Omega)}, \quad \mbox{if} \ \supp\hat{g}\subset \mathcal{B}_{\mu}
\label{LoSe}
\end{align}
for $\mu>0$.
The original Logvenenko-Sereda inequality with unknown dependence of $\delta, \mu$ was shown in \cite{LoSe74}. The sharp constant  with explicit dependence of $\delta, \mu$ as in (\ref{LoSe}) was obtained in \cite{K}.  The proof of (\ref{LoSe}) relies on analyticity of $g$ and analytic estimates (i.e. Bernstein-type inequality).

To show this connection between spectral inequalities (\ref{spectral-conti}) and uncertainty inequalities  (\ref{LoSe}), let us introduce some notations in spectral geometry. We denote $dP_{\lambda}$  the spectral measure of the operator $H=-\triangle +V(x)$. Using  spectral analysis, see e.g. \cite{RS81},
 we have
\begin{align*}
f=\int_{-\infty}^\infty dP_{\lambda}f,
\end{align*}
and
\begin{align*}
H (f)=\int_{-\infty}^\infty \lambda dP_{\lambda}f.
\end{align*}
For continuous function $F$ and $G$, we have
\begin{align*}
 F(H) f=\int_{-\infty}^\infty F(\lambda) dP_{\lambda}f  \quad  \mbox{and} \quad  (F(H) f, G(H) f)=\int_{-\infty}^\infty F(\lambda)G(\lambda) d(P_{\lambda}f, f).
\end{align*}
Then the spectral projector for the operator $H$ is given as
\begin{align}
\mathbb{I}_{\mu}(f)=\mathds{{1}}_{H\leq \mu}=\int^{\mu}_{-\infty} dP_{\lambda}f.
\label{proj-mu}
\end{align}

We consider  the Laplace operator in $\mathbb R^n$. Notice that the spectrum $\sigma(-\triangle)=[0, \infty)$ is absolutely continuous. Then \begin{align*}
\mathbb{I}_{\mu}(f)=\mathds{{1}}_{-\triangle\leq \mu}f=\int^{\mu}_{-\infty} dP_{\lambda}f
\end{align*}
for any $\mu>0$.
Using Fourier transform, we can rewrite it as
\begin{align*}
\mathds{{1}}_{-\triangle\leq \mu}f=\int_{\mathbb R^n} \hat{f}(\xi)\chi_{\mathbb B_{\mu}}(\xi) e^{2\pi i x\cdot\xi}\, d\xi,
\end{align*}
where
\begin{align*}
\hat{f}(\xi)=\int_{\mathbb R^n }f(x) e^{-2\pi x\cdot \xi} \, d\xi \quad \quad \mbox{for} \,  f\in L^2 (\mathbb R^n).
\end{align*}
Let $V(x)\equiv 0$ in Theorem \ref{th2}. One can deduce the equivalence of the spectral inequalities (\ref{spectral-conti}) and  the uncertainty inequalities  (\ref{LoSe}) by choosing $g=\mathds{{1}}_{-\triangle\leq \mu}f$. Such a equivalence between spectral inequalities  (\ref{spectral-conti}) and uncertainty inequalities  (\ref{LoSe}) for the Laplace operator $-\triangle$ has been observed and studied earlier in e.g. \cite{MV18}, \cite{WWZZ19}. It is known the thick sets in (\ref{geom-om-2}) are necessary for the validity of the uncertainty inequalities  (\ref{LoSe}). Thus, (\ref{geom-om-2}) seems to be necessary for the spectral inequality (\ref{spectral-conti}) in Theorem \ref{th2}.

The spectral inequalities imply the  observability inequality for heat equations. The observability inequality asserts that the  total energy of solutions can be estimated from above by the energy localized in a subdomain with an observability constant.
We study the following heat equation
\begin{equation}
    \left\{
    \begin{aligned}
        u_t-\triangle u +V(x)u &= 0 \quad  &\text{in }& \ \mathbb R^n \times (0, T) , \\
        u(\cdot,0) &= u_0    &\text{on }& \  \mathbb R^n .
    \end{aligned}
    \right.
    \label{nonlinear-one}
\end{equation}
 We will deduce the  observability inequality from  (\ref{aim-res}) in Theorem \ref{th1}. The observation region is restricted over the product of a subset of positive measure in time and $\Omega $ satisfying (\ref{geom-om}). The observablity inequality for the observation region on sets of positive measure in time or in a bounded domain has been studied in e.g. \cite{PW13} and \cite{AEWZ14}.  Assume that  the Assumption (A) holds for $V(x)$. Then  $\|V^-\|_{L^\infty}\leq C^0$, where $V^-(x)=\max\{ -V(x), 0\}$. We can show the following observablity inequality.

\begin{theorem}
Let $J\subset (0, T)$ be a measurable set of positive measure, $\Omega$ be in (\ref{geom-om}) and  $\sigma_2= \frac{1}{2}-\frac{\sigma}{\beta_1}>0$. Then any solution $u(x,t)$ of (\ref{nonlinear-one}) satisfies
\begin{align}
\| u(x, T)\|_{L^2(\mathbb R^n)}\leq  C(J) e^{C_1T\|V^-\|_{\infty}} e^{C(J) (\ln \frac 1 \delta )^{\frac{1}{ \sigma_2}}}
\|u\|_{L^2(\Omega\times J)},
\label{obser-1}
\end{align}
where $C(J)$ depends on $J$ and the constants in Assumption (A), and $C_1$ depends on the constants in Assumption (A).
\label{th3}
\end{theorem}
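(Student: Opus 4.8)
The plan is to derive the observability inequality~\eqref{obser-1} from the spectral inequality~\eqref{aim-res} of Theorem~\ref{th1} via the classical Lebeau--Robbiano strategy, adapted to (i) the whole space $\R^n$, (ii) sensor sets of positive measure distributed as in~\eqref{geom-om}, and (iii) an observation time set $J$ of positive measure rather than an interval. First I would reduce to the case $V\ge 0$: writing $\tilde u(x,t)=e^{-t\|V^-\|_\infty}u(x,t)$ transfers the problem to the equation $\tilde u_t-\triangle\tilde u+(V+\|V^-\|_\infty)\tilde u=0$ whose potential is nonnegative and still satisfies Assumption~(A) (with a shifted $C^0$), at the cost of the prefactor $e^{C_1 T\|V^-\|_\infty}$; equivalently one keeps the spectral data of $H$ and absorbs the non-positivity of the low eigenvalues into the same exponential factor. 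So it suffices to prove~\eqref{obser-1} with $C_1=0$ under the assumption $\inf\sigma(H)\ge 0$.

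Next I would set up the telescoping series argument. Fix the spectral projector $P_\lambda=P_\lambda(H)$ and, for a solution $u$ with data $u_0\in L^2(\R^n)$, decompose $u_0=\sum_k (P_{\lambda_{k+1}}-P_{\lambda_k})u_0$ along a dyadic-type sequence of spectral levels $\lambda_k$ to be chosen (e.g.\ $\lambda_k\sim k^{2}$ or geometric, tuned so that $\lambda_k^{\sigma/\beta_1+1/2}$ grows appropriately). The two ingredients are: (a) the \textbf{dissipation estimate} $\|P_\lambda^\perp e^{-tH}u_0\|_{L^2(\R^n)}\le e^{-\lambda t}\|u_0\|_{L^2(\R^n)}$, valid because $H\ge 0$ is self-adjoint (here $P_\lambda^\perp=I-P_\lambda$); and (b) the \textbf{spectral inequality} $\|\phi\|_{L^2(\R^n)}\le (\tfrac1\delta)^{C\lambda^{\sigma/\beta_1+1/2}}\|\phi\|_{L^2(\Omega)}$ for $\phi\in Ran(P_\lambda)$. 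The standard Lebeau--Robbiano block construction then yields, on each subinterval, a low-frequency observation estimate; summing (using that the exponential dissipation beats the spectral cost once the time steps are chosen of the right size) gives $\|u(\cdot,\tau_1)\|_{L^2(\R^n)}\le e^{C(\ln\frac1\delta)^{1/\sigma_2}}\int_{\tau_0}^{\tau_1}\|u(\cdot,t)\|_{L^2(\Omega)}^2\,dt$ for any $0\le\tau_0<\tau_1\le T$, where the exponent $1/\sigma_2=(\tfrac12-\tfrac\sigma{\beta_1})^{-1}$ arises by optimizing $\sup_k\big(\lambda_k^{\sigma/\beta_1+1/2}\ln\frac1\delta-c\,\lambda_k(\tau_1-\tau_0)\big)$ exactly as in the Lebeau--Robbiano lemma (this is where the hypothesis $\sigma_2>0$, i.e.\ $\frac{\sigma}{\beta_1}<\frac12$, is used — it makes the spectral exponent strictly sublinear in $\lambda$ so the sum converges).

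The remaining step is to upgrade the time interval $(\tau_0,\tau_1)$ to the measurable set $J$ of positive measure. Here I would invoke the telescoping-series/Lebeau--Robbiano-type argument on sets of positive time measure due to Phung--Wang and Apraiz--Escauriaza--Wang--Zhang (\cite{PW13,AEWZ14}): by a Lebesgue density point argument one picks a density point $\ell$ of $J$ and a decreasing sequence of intervals clustering at $\ell$, each meeting $J$ in a fixed fraction of its length; applying the interval-version estimate on consecutive scales and the dissipation decay between them, one telescopes to bound $\|u(\cdot,T)\|_{L^2(\R^n)}$ by $C(J)\,\|u\|_{L^2(\Omega\times J)}$ with the same structure of constant, the dependence on $J$ being collected into $C(J)$. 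Combining with the reduction of the first paragraph produces~\eqref{obser-1}.

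The main obstacle I anticipate is bookkeeping the constants so that the final exponent is exactly $(\ln\frac1\delta)^{1/\sigma_2}$ rather than something weaker: one must verify that in the Lebeau--Robbiano sum the spectral cost $(\tfrac1\delta)^{C\lambda_k^{\sigma/\beta_1+1/2}}$ and the gain $e^{-c\lambda_k(\tau_1-\tau_0)}$ balance to give a geometric series whose sum is $\exp\big(C(\tau_1-\tau_0)^{-\alpha}(\ln\frac1\delta)^{1/\sigma_2}\big)$ for the correct $\alpha$, and then that the density-point telescoping over $J$ does not destroy this dependence on $\delta$ (it should only affect $C(J)$, since $J$ and $\delta$ are independent data). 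A secondary technical point is checking that the dissipation estimate $\|e^{-tH}P_\lambda^\perp\|\le e^{-\lambda t}$ survives the reduction to $H\ge 0$; this is immediate from the spectral theorem once the shift by $\|V^-\|_\infty$ is in place, which is why that shift is done first. Everything else is a routine assembly of known arguments.
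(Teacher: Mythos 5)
Your plan correctly identifies all the main ingredients: the spectral inequality from Theorem~\ref{th1}, the high-frequency dissipation estimate $\|e^{t\tilde H}\tilde P^\lambda f\|\le e^{-\lambda(t-s)}\|e^{s\tilde H}f\|$ for $\lambda\ge 0$, the Young-type optimization $\sup_\lambda\bigl(C\lambda^{\sigma/\beta_1+1/2}\ln\tfrac1\delta-c\lambda\Delta t\bigr)$ producing the exponent $(\ln\tfrac1\delta)^{1/\sigma_2}$, and the Phung--Wang/Apraiz--Escauriaza--Wang--Zhang density-point telescoping over $J$. Your identification of where $\sigma_2>0$ is used is also exactly right. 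The paper dispenses with your explicit reduction to $V\ge 0$: it simply inserts the crude energy bound $\|e^{t_2\tilde H}f\|\le e^{T\|V^-\|_\infty}\|e^{t\tilde H}f\|$ at the moment it integrates over $J$, collecting $e^{C_1T\|V^-\|_\infty}$ as a prefactor; that is equivalent to your shift of $V$ by $\|V^-\|_\infty$.

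There is, however, a genuine gap in the sequencing of your second and third paragraphs. You propose to first establish \emph{interval observability}
$\|u(\cdot,\tau_1)\|_{L^2(\R^n)}^2\le e^{C(\tau_1-\tau_0)^{-\alpha}(\ln\frac1\delta)^{1/\sigma_2}}\int_{\tau_0}^{\tau_1}\|u(\cdot,t)\|_{L^2(\Omega)}^2\,dt$
via a Lebeau--Robbiano block iteration, and \emph{then} upgrade the time interval to the measurable set $J$ via the density-point lemma. This order does not close: the interval bound already requires the full integral over $(\tau_0,\tau_1)$, and there is no general way to discard the complement of $J$ inside a subinterval. The density-point telescoping is not a post-processing of interval observability; it has to consume a \emph{pointwise-in-time interpolation estimate}. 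What is actually needed (and what the spectral inequality plus the dissipation bound plus one optimization over $\lambda$ produce directly, without the block iteration) is
\begin{align*}
\|e^{t_2\tilde H}f\|_{L^2(\R^n)}\le 2\,e^{C_2(\tau(t_2-t_1))^{-\sigma_1/\sigma_2}(\ln\frac1\delta)^{1/\sigma_2}}\,\|e^{t\tilde H}f\|_{L^2(\Omega)}^{1-\tau}\,\|e^{t_1\tilde H}f\|_{L^2(\R^n)}^{\tau},
\qquad t_1<t<t_2,
\end{align*}
for a free H\"older exponent $\tau\in(0,1)$ (this follows from the elementary choice $e^{\lambda(t-s)}=\|e^{t\tilde H}f\|_{L^2(\R^n)}/\|e^{t\tilde H}f\|_{L^2(\Omega)}$). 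One then restricts the free time $t$ to $J\cap\bigl(t_1+\tfrac{t_2-t_1}{4},t_2\bigr)$, integrates, applies Young's inequality $AB\le(1-\tau)A^{1/(1-\tau)}+\tau B^{1/\tau}$ to split the two powers, and telescopes over the nested intervals $(k_{m+1},k_m)$ from Lemma~\ref{densi}. So the fix is minor but essential: replace your intermediate interval-observability statement by the pointwise interpolation inequality (which your own optimization already gives you) and run the Young/telescoping step directly on the $J$-dense portions of the density intervals. Your geometric-series bookkeeping and exponent $\alpha=\sigma_1/\sigma_2$ then go through unchanged, and the dependence on $J$ lands in $C(J)$ exactly as you anticipate.
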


The observability inequalities imply the null controllability of heat equations. The heat equation 
\begin{equation}
    \left\{
    \begin{aligned}
        u_t-\triangle u +V(x)u &= f(x, t)\mathds{{1}}_{\Omega\times J} \quad  &\text{in }& \ \mathbb R^n \times (0, T) , \\
        u(\cdot,0) &= u_0    &\text{on }& \  \mathbb R^n
    \end{aligned}
    \right.
    \label{null-con}
\end{equation}
is said to be null controllable from the set $\Omega$ in any given time $T$ if, for any initial data $u_0\in L^2(\mathbb R^n)$, there exists a control function $f\in L^2(\mathbb R^n\times (0, T))$ supported in ${\Omega\times J}$ such that the solution of (\ref{null-con}) satisfies $u(x,T)=0$. By Hilbert uniqueness methods (see e.g. Theorem 2.44 in \cite{C07}), under the assumptions of Theorem \ref{th3}, the observability inequality (\ref{obser-1}) shows that the heat equation (\ref{null-con}) is null controllable. The spectral inequality (\ref{spectral-conti}) in Theorem \ref{th2} also shows an observability inequality for the heat equation (\ref{null-con}) for $V(x)$ under the Assumption (B) and (\ref{geom-om-2}). Since it's proof is similar to the proof of Theorem \ref{th3}, we do not pursue the argument here.

The organization of the paper is as follows. In section 2, we obtain the proof of Theorem \ref{th1}. We also show a quantitative result on propagation of smallness for gradients in Lemma \ref{lemm-5}.  Section 3 is devoted to the proof of Theorem
 \ref{th2}. In section 3, we discuss the applications of spectral inequalities to observability inequalities.  The letters $C$, ${C}^{i}$, $\hat{C}^i$, $C_i$ denote positive constants that do not depend on $\lambda$ or $\mu$, and may vary from line to line. 

\begin{remark}
    Several related works became available simultaneously when the first version of the paper was published in arXiv. In \cite{W24}, some non-sharp spectral inequality was studied for (\ref{eigen-k}) with $n=1$ and polynomial type growth potential $V(x)$ using quasiconformal mappings.  Right after our paper, the authors in \cite{LM24} obtained the similar results as Theorem \ref{th2} using  a slightly different method from a different perspective. Compared with \cite{LM24}, our paper further quantitatively characterizes the density of sensor sets in the spectral inequality in  Theorem \ref{th2}.
\end{remark}

\textbf{Acknowledgements.} The  author is partially supported by NSF DMS-2154506.

\section{Proof of the first type spectral inequality }
In this section, we will show the proof of the spectral inequality (\ref{aim-res}) in Theorem \ref{th1}.
We first make use of the lifting argument (or so-called ghost dimension construction) to get rid of  $\lambda_k$ in (\ref{eigen-k}). Since $V(x)$ is bounded below and grows to infinity, all the eigenvalues $\lambda_k$ are bounded below, say, $\lambda_k\geq -E_0$ for some $E_0\geq 0$. Let $\phi \in Ran(P_\lambda(H))$ be given by (\ref{phi-s}) with eigenpairs $(\phi_k,\lambda_k)$ satisfying (\ref{eigen-k}).
We introduce
\begin{equation}
\mathcal{S}_{\lambda_k}(s)=\left \{
\begin{array}{lll}
\frac{\sinh(\sqrt{\lambda_k}s)}{\sqrt{\lambda_k}}, \quad &\lambda_k>0, \nonumber \\
s, &\lambda_k=0, \nonumber \\
\frac{\sinh (i\sqrt{-\lambda_k}s)}{i\sqrt{-\lambda_k}}, \quad &\lambda_k<0.
\end{array}
\right.
\end{equation}
We construct
\begin{align}
\hat{\Phi}(x, s)=\sum_{-\infty<\lambda_k\leq \lambda} \alpha_k \phi_k(x) \mathcal{S}_{\lambda_k}(s).
\label{phi-c}
\end{align}
Then $\hat{\Phi}(x, s)$ satisfies the equation
\begin{align}
-{\triangle}\hat{\Phi} +V(x) \hat{\Phi}=0 \quad \mbox{in} \ \mathbb R^{n+1}.
\label{PPhi}
\end{align}
Note that $D_{s} \hat{\Phi} (x, 0)=\phi(x)$ and $\hat{\Phi}(x, 0)=0$, where $\triangle \hat{\Phi} =\sum^{n}_{i=1} D^2_i \hat{\Phi}+D^2_{ss}\hat{\Phi}$ in (\ref{PPhi}). Also notice that the Laplace operator $\triangle$ and the gradient operator $\nabla$ may be taken on different variables as we do lifting arguments later on. For convenience, we will still use the notation $\triangle$ and $\nabla$ if the context is understood.

We need the decay estimates for linear combination of eigenfunctions $\phi$ at infinity. 
The following lemma quantifies the decay property of $\phi$, which has been studied in, e.g. \cite{GY} and \cite{DSV}. Interested readers may refer to Theorem 1.4 in \cite{DSV} for a detailed proof.
\begin{lemma}
There exists a constant $\hat{C}$, depending on $\beta_1$, $c_1$, $C^0$ and $c_2$ such that for all $\lambda \ge 1$ and $\phi\in Ran(P_\lambda(H))$, we have
\begin{align}
\|\phi\|^2_{H^1(\mathbb R^n\backslash \mathcal B_{\frac{1}{2}\hat{C}\lambda^{1/\beta_1}})} \leq \frac{1}{2} \|\phi\|^2_{L^2(\mathbb R^n)}.
\label{decay-cr}
\end{align}
\label{propo-new}
\end{lemma}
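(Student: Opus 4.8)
The plan is to establish the decay estimate eigenfunction-by-eigenfunction and then sum, using orthogonality in $L^2(\R^n)$. First I would record the basic energy identity for a single eigenpair $(\phi_k,\lambda_k)$ solving \eqref{eigen-k}: multiplying by $\phi_k$ and integrating by parts over $\R^n$ gives
\begin{align}
\int_{\R^n} |\nabla \phi_k|^2 + \int_{\R^n} V(x)|\phi_k|^2 = \lambda_k \int_{\R^n} |\phi_k|^2. \nonumber
\end{align}
More usefully, I would localize this identity: for a cutoff $\chi$ supported outside a ball $\mathcal B_\rho$ I would test \eqref{eigen-k} against $\chi^2 \phi_k$ and use the lower bound \eqref{V.growth}, $V(x)\ge c_1|x|^{\beta_1}-C^0$. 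On the region $|x|\ge \rho$ with $\rho$ comparable to $\lambda^{1/\beta_1}$ (times a large constant), the potential satisfies $V(x)-\lambda_k \ge c_1|x|^{\beta_1} - C^0 - \lambda \ge \tfrac{c_1}{2}|x|^{\beta_1}$, so the zeroth-order term has a favorable sign and dominates $\lambda$. This is the standard Agmon/Persson mechanism for exponential decay of eigenfunctions: outside the classically allowed region the eigenfunction is controlled by its values on the boundary sphere, and iterating the Caccioppoli-type inequality on dyadic annuli yields Gaussian-type (indeed $e^{-c|x|^{1+\beta_1/2}}$) decay of both $\phi_k$ and $\nabla\phi_k$ in $L^2$ of the far region, with constants uniform in $k$ provided $\lambda_k\le\lambda$. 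The cited Theorem~1.4 in \cite{DSV} gives exactly such a quantitative statement, so I would invoke it for each $\phi_k$.

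The second step is to pass from the single-eigenfunction estimate to the combination $\phi=\sum_{\lambda_k\le\lambda}\alpha_k\phi_k$. The key structural fact, already noted in the excerpt, is that $\{\phi_k:\lambda_k\le\lambda\}$ is $L^2(\R^n)$-orthogonal; moreover, since each $\phi_k$ solves an elliptic equation with the same operator, the combination $\phi$ satisfies $-\triangle\phi + V\phi = \sum \lambda_k\alpha_k\phi_k$, and one checks that the $H^1$ inner products $\int \nabla\phi_k\cdot\nabla\phi_\ell + \int V\phi_k\phi_\ell = \lambda_k\delta_{k\ell}\|\phi_k\|_{L^2}^2$ are also diagonal. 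Therefore both $\|\phi\|_{L^2(\R^n)}^2$ and the ``natural energy'' $\sum_k\lambda_k|\alpha_k|^2\|\phi_k\|_{L^2}^2$ decompose as sums over $k$, and the same localized energy identity applied to $\phi$ directly (testing $-\triangle\phi+V\phi=\sum\lambda_k\alpha_k\phi_k$ against $\chi^2\phi$) reproduces the one-eigenfunction argument with $\lambda_k$ replaced by the supremum $\lambda$: on $|x|\ge \rho$ with $c_1\rho^{\beta_1}-C^0-\lambda\ge \tfrac{c_1}{2}\rho^{\beta_1}$ one gets
\begin{align}
\int_{|x|\ge \rho}\!\!\Big(|\nabla\phi|^2 + \tfrac{c_1}{2}|x|^{\beta_1}|\phi|^2\Big) \ \le\ (\text{boundary layer terms on } \rho\le|x|\le 2\rho). \nonumber
\end{align}
Iterating over the dyadic annuli $\{2^m\rho \le |x| \le 2^{m+1}\rho\}$ and absorbing, the boundary contribution is a geometric series bounded by a fixed fraction of $\|\phi\|_{L^2(\R^n)}^2$ once $\rho = \tfrac12\hat C\lambda^{1/\beta_1}$ with $\hat C$ chosen large enough (depending on $\beta_1,c_1,C^0,c_2$); this is where the precise constant $\tfrac12$ on the right-hand side of \eqref{decay-cr} is arranged. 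The upper bound \eqref{V.bound} is used only to control the cutoff commutator and cross terms so that the absorption is legitimate.

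The main obstacle I anticipate is purely bookkeeping rather than conceptual: making the dyadic iteration and the absorption step genuinely uniform in $\lambda\ge 1$, so that the threshold radius scales exactly like $\lambda^{1/\beta_1}$ and the accumulated constant stays below $\tfrac12$. One must be careful that when testing against $\chi^2\phi$ the term $\int |\nabla\chi|^2|\phi|^2$ — which lives in the annulus $\rho\le|x|\le 2\rho$ and carries a factor $\rho^{-2}$ — is dominated by the gain $c_1|x|^{\beta_1}$ from the potential there, which holds precisely because $\rho^{\beta_1}\gtrsim \lambda \ge 1$ so $\rho^{-2}\ll \rho^{\beta_1}$ for $\rho$ large. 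Everything else (coercivity, Cauchy--Schwarz on the cross term $\sum\lambda_k\alpha_k\phi_k$ against $\chi^2\phi$, and reassembling via orthogonality) is routine. Since the statement is attributed to \cite{GY} and \cite{DSV} with a reference to a detailed proof there, I would present this as a sketch and defer the technical iteration to those sources.
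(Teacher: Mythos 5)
The paper does not actually prove this lemma: it simply cites \cite{GY} and points the reader to Theorem~1.4 of \cite{DSV}, which is exactly what you ultimately decide to do. Your Agmon-type sketch (coercivity of $V-\lambda$ outside a ball of radius $\sim\lambda^{1/\beta_1}$, localized energy identity with a cutoff, absorption of commutator terms) correctly identifies the mechanism underlying the cited estimate, so this is consistent with the paper's treatment.
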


We can compare the $L^2$ norm of $\phi$ and $H^1$ norm of $\hat{\Phi}$. The estimate is standard. We refer the interested readers to consult e.g. \cite{JL} or \cite{ZZ23} for a detailed proof. The readers may also check the proof of Lemma \ref{lem-com} for the proof in the same spirit. 
\begin{lemma}
Let $\phi\in Ran (P_\lambda(H))$ and $\hat{\Phi}$ be given in (\ref{phi-c}). For any $\lambda\ge 1$ and small $\rho>0$, we have
\begin{align}
2\rho \|\phi\|^2_{L^2(\mathbb R^n)}\leq \|\hat{\Phi}\|^2_{H^1(\mathbb R^{n}\times (-\rho, \rho))}\leq 2\rho(1+\frac{\rho^2}{3}(1+\lambda) ) e^{2\rho \sqrt{\lambda}} \|\phi\|^2_{L^2(\mathbb R^n)}.
\label{com-phi}
\end{align}
\label{lemma-1}
\end{lemma}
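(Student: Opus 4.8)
The plan is to expand $\hat\Phi$ and $\nabla\hat\Phi$ in the orthonormal eigenbasis and reduce everything to one-dimensional integrals in the ghost variable $s$. Since the $\phi_k$ with $\lambda_k\le\lambda$ are orthonormal in $L^2(\R^n)$ and satisfy $-\triangle\phi_k+V\phi_k=\lambda_k\phi_k$, Parseval gives, for each fixed $s$,
\[
\|\hat\Phi(\cdot,s)\|_{L^2(\R^n)}^2=\sum_k\alpha_k^2\,\mathcal S_{\lambda_k}(s)^2,\qquad
\|\nabla_x\hat\Phi(\cdot,s)\|_{L^2(\R^n)}^2=\sum_k\alpha_k^2\,\mathcal S_{\lambda_k}(s)^2\!\int_{\R^n}|\nabla\phi_k|^2,
\]
and $\|D_s\hat\Phi(\cdot,s)\|_{L^2(\R^n)}^2=\sum_k\alpha_k^2\,\mathcal S_{\lambda_k}'(s)^2$, where $\mathcal S_{\lambda_k}'(s)=\cosh(\sqrt{\lambda_k}\,s)$ (interpreted as $\cos(\sqrt{-\lambda_k}\,s)$ when $\lambda_k<0$, and $1$ when $\lambda_k=0$). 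Thus $\|\hat\Phi\|_{H^1(\R^n\times(-\rho,\rho))}^2=\sum_k\alpha_k^2\int_{-\rho}^{\rho}\big[(1+a_k)\mathcal S_{\lambda_k}(s)^2+\mathcal S_{\lambda_k}'(s)^2\big]\,ds$, with $a_k:=\int_{\R^n}|\nabla\phi_k|^2=\lambda_k-\int V\phi_k^2$ obtained by testing \eqref{eigen-k} against $\phi_k$. Since $\sum_k\alpha_k^2=\|\phi\|_{L^2(\R^n)}^2$, the whole lemma reduces to a scalar two-sided bound on $m(\lambda_k,\rho):=\int_{-\rho}^{\rho}[(1+a_k)\mathcal S_{\lambda_k}(s)^2+\mathcal S_{\lambda_k}'(s)^2]\,ds$ that is uniform over the relevant $k$, and then summing against $\alpha_k^2$.

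For the lower bound, drop the nonnegative $(1+a_k)\mathcal S_{\lambda_k}(s)^2$ term and use $\mathcal S_{\lambda_k}'(0)=1$, so near $s=0$ we have $\mathcal S_{\lambda_k}'(s)^2\ge$ something; more simply, $\mathcal S_{\lambda_k}'(s)^2\ge1$ on all of $(-\rho,\rho)$ when $\lambda_k\ge 0$ (since $\cosh\ge1$), giving $m(\lambda_k,\rho)\ge 2\rho$. When $\lambda_k<0$ one must be a touch more careful because $\cos$ dips below $1$, but $\lambda_k\ge-E_0$ and $\rho$ is small, so $\cos(\sqrt{-\lambda_k}\,s)^2\ge\cos(\sqrt{E_0}\rho)^2$ is bounded below; alternatively, keep the $(1+a_k)\mathcal S_{\lambda_k}^2$ contribution, which is $\ge 0$ after noting $1+a_k=1+\lambda_k-\int V\phi_k^2\ge 0$ from the spectral bound $\lambda_k\ge\inf V$ — in fact one verifies $(1+a_k)\mathcal S_{\lambda_k}(s)^2+\mathcal S_{\lambda_k}'(s)^2\ge 1$ pointwise, which handles all sign cases at once and yields $2\rho\|\phi\|_{L^2}^2\le\|\hat\Phi\|_{H^1}^2$.

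For the upper bound, I would bound each ingredient by its value in the $\lambda_k=\lambda$ (worst) case: $\mathcal S_{\lambda_k}'(s)^2\le\cosh(\sqrt\lambda\,\rho)^2\le e^{2\rho\sqrt\lambda}$ for $|s|\le\rho$ and all $\lambda_k\le\lambda$ (the map $t\mapsto\cosh(\sqrt{t}\,s)$ is increasing in $t$, with the $t<0$ oscillatory case dominated by $1\le e^{2\rho\sqrt\lambda}$); similarly $\mathcal S_{\lambda_k}(s)^2\le\rho^2 e^{2\rho\sqrt\lambda}/3\cdot 3=\ldots$ — more precisely $|\mathcal S_{\lambda_k}(s)|\le|s|\cosh(\sqrt\lambda|s|)$ so $\int_{-\rho}^{\rho}\mathcal S_{\lambda_k}(s)^2ds\le e^{2\rho\sqrt\lambda}\int_{-\rho}^{\rho}s^2ds=\tfrac{2\rho^3}{3}e^{2\rho\sqrt\lambda}$, and $1+a_k\le 1+\lambda$ since $a_k=\lambda_k-\int V\phi_k^2\le\lambda_k+\|V^-\|_\infty\le\lambda+C^0$ — here I would use $1+\lambda$ as in the statement, absorbing $C^0$ into the constant or noting $\lambda\ge 1$. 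Collecting, $m(\lambda_k,\rho)\le 2\rho\,e^{2\rho\sqrt\lambda}+\tfrac{2\rho^3}{3}(1+\lambda)e^{2\rho\sqrt\lambda}=2\rho\big(1+\tfrac{\rho^2}{3}(1+\lambda)\big)e^{2\rho\sqrt\lambda}$, and summing against $\alpha_k^2$ gives the claimed upper bound. The only mildly delicate point — the "main obstacle," such as it is — is the uniform monotonicity in $\lambda_k$ of the hyperbolic/trigonometric kernels across the sign change at $\lambda_k=0$, and correctly accounting for the finitely many negative eigenvalues bounded below by $-E_0$; once $\mathcal S_{\lambda_k}$ and $\mathcal S_{\lambda_k}'$ are written via the power series $\mathcal S_{\lambda_k}(s)=\sum_{m\ge0}\lambda_k^m s^{2m+1}/(2m+1)!$ and $\mathcal S_{\lambda_k}'(s)=\sum_{m\ge0}\lambda_k^m s^{2m}/(2m)!$, the termwise comparison $|\lambda_k|^m\le\lambda^m$ (valid since $|\lambda_k|\le\lambda$ once $\lambda\ge E_0$, else absorb into $C$) makes both monotonicity claims transparent and the rest is the routine computation above.
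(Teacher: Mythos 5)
Your overall plan — reduce to one-dimensional integrals over the ghost variable $s$ by expanding in the orthonormal eigenbasis — matches the spirit of the paper's own approach (the paper delegates the detailed proof to \cite{JL}, \cite{ZZ23} and to the analogous Lemma~\ref{lem-com}). However, there is a genuine error in your first display that propagates through both bounds.

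You assert that, for fixed $s$,
\[
\|\nabla_x\hat\Phi(\cdot,s)\|_{L^2(\R^n)}^2=\sum_k\alpha_k^2\,\mathcal S_{\lambda_k}(s)^2\int_{\R^n}|\nabla\phi_k|^2,
\]
but this is \emph{not} a Parseval identity: the gradients $\nabla\phi_k$ are not mutually orthogonal in $L^2(\R^n)$. Indeed, for $k\ne l$, integration by parts and \eqref{eigen-k} give
\[
\int_{\R^n}\nabla\phi_k\cdot\nabla\phi_l=-\int_{\R^n}\phi_k\,\triangle\phi_l
=-\int_{\R^n}V\,\phi_k\,\phi_l+\lambda_l\int_{\R^n}\phi_k\,\phi_l=-\int_{\R^n}V\,\phi_k\,\phi_l,
\]
which has no reason to vanish. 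What \emph{is} diagonal is the form $\int\nabla\phi_k\cdot\nabla\phi_l+\int V\phi_k\phi_l=\lambda_k\delta_{kl}$. This is exactly the observation the paper exploits in the proof of Lemma~\ref{lem-com}: instead of expanding $\|\nabla_x\hat\Phi(\cdot,s)\|^2$ term by term, test the equation satisfied by $g(x):=\hat\Phi(x,s)$ against $g$ itself, giving
\[
\|\nabla_x\hat\Phi(\cdot,s)\|^2_{L^2(\R^n)}+\int_{\R^n}V\,|\hat\Phi(\cdot,s)|^2
=\sum_k\alpha_k^2\,\lambda_k\,\mathcal S_{\lambda_k}(s)^2,
\]
and then use only the one-sided bound $V\ge -C^0$ to control the gradient from above by $\sum_k\alpha_k^2(\lambda_k+C^0)\mathcal S_{\lambda_k}(s)^2$. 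You in fact perform this very computation at the level of a single $\phi_k$ when you derive $a_k=\lambda_k-\int V\phi_k^2$; the fix is to do it against the full sum $\hat\Phi(\cdot,s)$ rather than termwise, because the cross terms in $\int V\,\phi_k\phi_l$ exactly cancel the cross terms in $\int\nabla\phi_k\cdot\nabla\phi_l$.

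Two smaller points. First, your pointwise claim that $(1+a_k)\mathcal S_{\lambda_k}(s)^2+\mathcal S_{\lambda_k}'(s)^2\ge 1$ ``handles all sign cases at once'' is not justified: for $\lambda_k<0$ one has $\mathcal S_{\lambda_k}'(s)^2=\cos^2(\sqrt{-\lambda_k}\,s)=1-(-\lambda_k)\mathcal S_{\lambda_k}(s)^2$, so the claim is equivalent to $1+a_k\ge-\lambda_k$, which requires $a_k\ge -\lambda_k-1$ and is not automatic. The paper's Lemma~\ref{lem-com} instead restricts $\rho$ small enough (using that the finitely many negative eigenvalues are bounded below by $-E_0$) so that $\cos(\sqrt{-\lambda_k}\,\rho)\ge c_0>0$; that route loses the sharp constant $2\rho$ and yields $c\rho$, but that suffices for how the lemma is used. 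Second, you note that $a_k\le\lambda+C^0$ and suggest absorbing $C^0$ ``into the constant'' — but the stated upper bound has no free constant, so either the $C^0$ needs to be tracked or one should say explicitly that the estimate holds up to a harmless constant depending on $C^0$, $E_0$ (which is all that Lemma~\ref{lem-2} actually uses). With the gradient term handled by the integration-by-parts identity above, the rest of your computation of the scalar integrals $\int_{-\rho}^\rho\mathcal S_{\lambda_k}^2$, $\int_{-\rho}^\rho(\mathcal S_{\lambda_k}')^2$ using $|\sinh(\sqrt{\lambda}s)|\le|\sqrt{\lambda}s|\cosh(\sqrt{\lambda}s)$ and $1\le\cosh(\sqrt{\lambda}s)\le e^{\sqrt{\lambda}|s|}$ is sound and in line with the paper.
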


Relied on the decay estimates in the last lemma, we are able to show some  doubling type estimates for $\hat{\Phi}$.
\begin{lemma}
Let $\hat{\Phi}$ be in (\ref{PPhi}). For any $\lambda\geq 1$ and  $\rho>0$, we have
\begin{align}
\|\hat{\Phi}\|_{H^1 (\mathbb R^n\times (-\frac{\rho}{2}, \frac{\rho}{2}))}^2 \leq C(1+\lambda)\|\hat{\Phi}\|_{L^2 (\mathcal B_{\hat{C}\lambda^{1/\beta_1}}\times (-{\rho}, {\rho}))}^2
\label{crucial-cut}
\end{align}
and 
\begin{align}
\|\hat{\Phi}\|^2_{H^1(\mathbb R^{n}\times (-{4\rho}, {4\rho}))}\leq C e^{9\rho \sqrt{\lambda}}\| \hat{\Phi}\|_{H^1(\mathcal B_{\frac{1}{2}\hat{C}\lambda^{1/\beta_1}}\times (-\frac{\rho}{2}, \frac{\rho}{2}))}^2.
\label{kao-niu}
\end{align}
\label{lem-2}
\end{lemma}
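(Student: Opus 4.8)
The plan is to derive both inequalities from the estimates already assembled, namely the decay estimate (\ref{decay-cr}), the comparison (\ref{com-phi}), and the elementary behavior of the slice functions $\mathcal{S}_{\lambda_k}$. The essential feature is that $\hat{\Phi}(x,s)$ is, for each fixed $s$, a linear combination of the $\phi_k$ weighted by $\mathcal{S}_{\lambda_k}(s)$, so that $\|\hat{\Phi}(\cdot,s)\|_{L^2(\R^n)}^2 = \sum_{\lambda_k\le\lambda}\alpha_k^2 \mathcal{S}_{\lambda_k}(s)^2$ and similarly for $H^1$ norms, with the $\lambda_k$-weight always lying between constants times $s^2$ and $s^2 e^{2\sqrt{\lambda}|s|}$ (since $(\sinh t)/t$ is increasing and $\mathcal{S}_{\lambda_k}(s)^2 \le s^2\cosh^2(\sqrt{\lambda_k}|s|)\le s^2 e^{2\sqrt{\lambda}|s|}$ for $|\lambda_k|\le\lambda$). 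First I would record these slice-wise two-sided bounds; together with Parseval in the $\phi_k$-basis they convert questions about $\hat{\Phi}$ into questions about $\phi$.

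For (\ref{crucial-cut}): the point is to replace the full spatial domain $\R^n$ on the left by the ball $\mathcal B_{\hat C\lambda^{1/\beta_1}}$ on the right. I would integrate the slice-wise identities over $s\in(-\frac\rho2,\frac\rho2)$ on the left and over $s\in(-\rho,\rho)$ on the right, so that the $s$-integrals contribute a harmless factor (bounded by $C$, using $\rho$ small if needed, or absorbed into $C(1+\lambda)$), and reduce to showing $\|\phi\|_{H^1(\R^n)}^2 \le C \|\phi\|_{L^2(\mathcal B_{\hat C\lambda^{1/\beta_1}})}^2$ up to the slice factors. This in turn follows from the decay estimate (\ref{decay-cr}) — which gives $\|\phi\|_{H^1(\R^n\setminus \mathcal B_{\frac12\hat C\lambda^{1/\beta_1}})}^2 \le \frac12\|\phi\|_{L^2(\R^n)}^2$, hence $\|\phi\|_{H^1(\R^n)}^2 \le 2\|\phi\|_{H^1(\mathcal B_{\frac12\hat C\lambda^{1/\beta_1}})}^2$ — combined with a Caccioppoli/elliptic-regularity estimate on the equation $-\triangle\phi_k + V\phi_k = \lambda_k\phi_k$ to pass from the $H^1$ norm on $\mathcal B_{\frac12\hat C\lambda^{1/\beta_1}}$ to the $L^2$ norm on the slightly larger ball $\mathcal B_{\hat C\lambda^{1/\beta_1}}$, the extra factor of $1+\lambda$ accounting for the $H^1$-vs-$L^2$ and for $\lambda_k$ and the growth of $V$ on that ball (which is polynomially bounded in $\lambda$ by Assumption (A)). One must be slightly careful to do the Caccioppoli estimate for $\hat{\Phi}$ itself on $\R^{n+1}$ using equation (\ref{PPhi}), rather than slice by slice, to keep the $s$-derivative under control; this is the natural route since (\ref{PPhi}) has no lower-order term.

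For (\ref{kao-niu}): here I would use (\ref{com-phi}) twice. The left side $\|\hat{\Phi}\|_{H^1(\R^n\times(-4\rho,4\rho))}^2$ is bounded above by the upper half of (\ref{com-phi}) with $\rho$ replaced by $4\rho$, giving $\le C\rho(1+\rho^2(1+\lambda))e^{8\rho\sqrt\lambda}\|\phi\|_{L^2(\R^n)}^2$; the right side contains $\|\hat{\Phi}\|_{H^1(\mathcal B_{\frac12\hat C\lambda^{1/\beta_1}}\times(-\frac\rho2,\frac\rho2))}^2$, which by the decay estimate (\ref{decay-cr}) applied slice-wise is comparable to $\|\hat{\Phi}\|_{H^1(\R^n\times(-\frac\rho2,\frac\rho2))}^2$ (losing at most a factor $2$), and that is bounded below by the lower half of (\ref{com-phi}) with $\rho$ replaced by $\frac\rho2$, i.e. $\ge \rho\|\phi\|_{L^2(\R^n)}^2$ up to a constant. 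Dividing, the ratio is at most $C(1+\rho^2(1+\lambda))e^{8\rho\sqrt\lambda}$, and for $\lambda\ge1$ and the relevant (bounded) range of $\rho$ this is absorbed into $Ce^{9\rho\sqrt\lambda}$. The main obstacle I anticipate is the first inequality: cleanly justifying the Caccioppoli step for $\hat{\Phi}$ with the $s$-direction included, and bookkeeping the powers of $\lambda$ coming from $\|V\|_{L^\infty(\mathcal B_{\hat C\lambda^{1/\beta_1}})}\le c_2(\hat C\lambda^{1/\beta_1}+1)^{\beta_1}\sim \lambda$ so that they really do fit inside the stated factor $C(1+\lambda)$ rather than a larger power.
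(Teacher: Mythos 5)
Your plan matches the paper's proof essentially exactly: inequality (\ref{crucial-cut}) is obtained by applying the decay estimate (\ref{decay-cr}) slice-wise in $s$ (using that both $\hat\Phi(\cdot,s)$ and $D_s\hat\Phi(\cdot,s)$ lie in $Ran(P_\lambda(H))$, which is cleaner than invoking Parseval since $\nabla_x\phi_k$ are not orthogonal), then a Caccioppoli inequality for $\hat\Phi$ using equation (\ref{PPhi}) in $\R^{n+1}$, and inequality (\ref{kao-niu}) comes from applying the two-sided bound (\ref{com-phi}) at scales $\rho/2$ and $4\rho$ and then restricting to the ball via (\ref{decay-cr}). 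The one step worth making explicit is that the Caccioppoli estimate must be carried out over a finite-overlap cover of $\mathcal B_{\frac12\hat C\lambda^{1/\beta_1}}\times(-\tfrac{\rho}{2},\tfrac{\rho}{2})$ by fixed-size cylinders $\mathcal B_1(z_i)\times(-\tfrac{\rho}{2},\tfrac{\rho}{2})$ with $\cup_i\mathcal B_2(z_i)\subset\mathcal B_{\hat C\lambda^{1/\beta_1}}$, so that the local constants remain independent of $\lambda$ and the factor $C(1+\lambda)$ arises solely from $\|V\|_{L^\infty(\mathcal B_{\hat C\lambda^{1/\beta_1}})}\le C(1+\lambda)$, exactly as the paper does.
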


\begin{proof}

Based on (\ref{decay-cr}), we can show that the global $H^1$ norm of $\Phi$ can be controlled by its local norm.
In fact, it follows from (\ref{decay-cr}) that
\begin{align*}
\|\phi\|^2_{H^1(\mathbb R^n\backslash \mathcal B_{\frac{1}{2}\hat{C}\lambda^{1/\beta_1}})}\leq  \|\phi\|^2_{L^2(\mathcal B_{\frac{1}{2}\hat{C}\lambda^{1/\beta_1}})}.
\end{align*}
This yields that
\begin{align}
\|\phi\|^2_{H^1(\mathbb R^n)}\leq 2 \|\phi\|^2_{H^1(\mathcal B_{\frac{1}{2}\hat{C}\lambda^{1/\beta_1}})}
\label{cut-off}
\end{align}
and
\begin{align}
\|\phi\|^2_{L^2 (\mathbb R^n)}\leq 2 \|\phi\|^2_{L^2(\mathcal B_{\frac{1}{2}\hat{C}\lambda^{1/\beta_1 }})}.
\label{L2}
\end{align}

Since $\hat{\Phi}(\cdot, s)\in Ran(P_\lambda(H))$, the estimate (\ref{cut-off}) implies that
\begin{align}
\|\hat{\Phi}\|^2_{H^1(\mathbb R^n)}\leq 2 \|\hat{\Phi}\|^2_{H^1(\mathcal B_{\frac{1}{2}\hat{C}\lambda^{1/\beta_1}})}.
\label{inte-x}
\end{align}
Since $D_{s} \hat{\Phi}(\cdot, s)\in Ran(P_\lambda(H))$ as well, the estimate (\ref{L2}) shows that
\begin{align}\label{inte-xn+1}
\|D_{s} \hat{\Phi}\|^2_{L^2 (\mathbb R^n)}\leq 2 \|D_{s} \hat{\Phi}\|^2_{L^2(\mathcal B_{\frac{1}{2}\hat{C}\lambda^{1/\beta_1}})}.
\end{align}
Combining both \eqref{inte-x} and \eqref{inte-xn+1} and integrating in $s$ over $(-\frac{\rho}{2}, \frac{\rho}{2})$, we obtain that
\begin{align}
\|\hat{\Phi}\|_{H^1 (\mathbb R^n\times (-\frac{\rho}{2}, \frac{\rho}{2}))}^2 \leq 2 \|\hat{\Phi}\|_{H^1(\mathcal B_{\frac{1}{2}\hat{C}\lambda^{1/\beta_1}}\times (-\frac{\rho}{2}, \frac{\rho}{2}))}^2.
\label{cut-est}
\end{align}

As $\hat{\Phi}$ satisfies  the elliptic equation (\ref{PPhi}), we apply the following Caccioppoli inequality
\begin{align}
 \| D \hat{\Phi}\|_{L^2(\mathcal{B}_{1}(z)\times (-\frac{\rho}{2},\frac{\rho}{2} ))}^2 & \le C  \| \hat{\Phi} \|_{L^2(\mathcal{B}_{2}(z)\times (-{\rho},{\rho})}^2 + C \| |V|^{\frac12} \hat{\Phi}\|_{L^2(\mathcal{B}_{2}(z)\times (-{\rho},{\rho}))}^2.
 \label{cappo}
\end{align}
 We cover $\mathcal B_{\frac{1}{2}\hat{C}\lambda^{1/\beta_1}}\times (-\frac{\rho}{2}, \frac{\rho}{2})$ by a finite number of $\mathcal{B}_1(z_i)\times (-\frac{\rho}{2}, \frac{\rho}{2})$ with finite overlaps.  The union of these balls also satisfies 
 \begin{align*}
 \cup_{i} \mathcal{B}_2(z_i)\times (-{\rho}, {\rho})\subset \mathcal{B}_{\hat{C}\lambda^{1/\beta_1}}\times (-{\rho}, {\rho} ).
 \end{align*}
 Because of the finite overlaps, using (\ref{cappo}), we have
\begin{align*}
 \| D \hat{\Phi}\|_{L^2( \mathcal{B}_{\frac{1}{2}\hat{C}\lambda^{1/\beta_1}}\times (-\frac{\rho}{2},\frac{\rho}{2} )}^2 \leq C(1+\lambda)
 \| \hat{\Phi}\|_{L^2( \mathcal{B}_{\hat{C}\lambda^{1/\beta_1}}\times (-{\rho}, {\rho} ))}^2,
\end{align*}
where we used $\|V\|_{L^\infty(\mathcal{B}_{\hat{C}\lambda^{1/{\beta_1}}})}\leq C(1+\lambda)$ which is from in Assumption (A).
It follows from (\ref{cut-est}) that
\begin{align}
\|\hat{\Phi}\|_{H^1 (\mathbb R^n\times (-\frac{\rho}{2}, \frac{\rho}{2}))}^2 \leq C(1+\lambda)\|\hat{\Phi}\|_{L^2 (\mathcal B_{\hat{C}\lambda^{1/\beta_1}}\times (-{\rho}, {\rho}))}^2.
\end{align}
This completes the proof of (\ref{crucial-cut}).
%\end{proof}

%We establish a doubling inequality for $\Phi$.
%\begin{lemma}
%Let $\Phi$ be given in (\ref{PHHI}). For any $\lambda\ge 1$ and small $\rho>0$, we have
%\begin{align}
%\|\nabla {\Phi}\|_{L^2(\mathcal{B}_{4\hat{C}\lambda^{1/\beta_1}}\times ({-4\rho}, {4\rho} )\times ({-4\rho}, {4\rho} )\times %({-4\rho}, {4\rho} ) )}^2 \leq C(1+\lambda) e^{6\rho \sqrt{\lambda}}
% \| \nabla {\Phi}\|_{L^2(\mathcal{B}_{\hat{C}\lambda^{1/\beta_1}}\times (-{\rho}, {\rho} )\times (-{\rho},{\rho} )\times ({-\rho}, {\rho} ))}^2.
% \label{double-phi}
%\end{align}
%\end{lemma}

%\begin{proof}
Thanks to (\ref{com-phi}), we have
\begin{align}
\rho \|\phi\|^2_{L^2(\mathbb R^n)}\leq \|\hat{\Phi}\|^2_{H^1(\mathbb R^{n}\times (-\frac{\rho}{2}, \frac{\rho}{2}))}\leq \rho(1+\frac{\rho^2}{12}(1+\lambda) ) e^{2\rho \sqrt{\lambda}} \|\phi\|^2_{L^2(\mathbb R^n)}
\label{camp-1}
\end{align}
and
\begin{align}
8\rho \|\phi\|^2_{L^2(\mathbb R^n)}\leq \|\hat{\Phi}\|^2_{H^1(\mathbb R^{n}\times (-{4\rho}, {4\rho}))}\leq 8\rho(1+\frac{16\rho^2}{3}(1+\lambda) ) e^{8\rho \sqrt{\lambda}} \|\phi\|^2_{L^2(\mathbb R^n)}.
\label{camp-2}
\end{align}
The combination of (\ref{camp-1}) and (\ref{camp-2}) yields that
\begin{align}
\|\hat{\Phi}\|^2_{H^1(\mathbb R^{n}\times (-{4\rho}, {4\rho}))}\leq 8(1+\frac{16\rho^2}{3}(1+\lambda) ) e^{8\rho \sqrt{\lambda}}
\|\hat{\Phi}\|^2_{H^1(\mathbb R^{n}\times (-\frac{\rho}{2}, \frac{\rho}{2}))}.
\label{half-cut}
\end{align}
Then it follows from (\ref{cut-est}) and (\ref{half-cut})  that
\begin{align}
\|\hat{\Phi}\|^2_{H^1(\mathbb R^{n}\times (-{4\rho}, {4\rho}))}\leq C e^{9\rho \sqrt{\lambda}}\| \hat{\Phi}\|_{H^1(\mathcal B_{\frac{1}{2}\hat{C}\lambda^{1/\beta_1}}\times (-\frac{\rho}{2}, \frac{\rho}{2}))}^2.
\label{kao-niu-1}
\end{align}
Therefore, the lemma is arrived.
\end{proof}

Next we apply the propagation of smallness results for gradients in \cite{LM18} to derive a refined three-ball type inequality with an explicit exponent. For a second order uniformly elliptic equation  
\begin{align}
{\rm div}(  A(\bar x)\nabla W)=0  
\label{elliptic}
\end{align}
in a bounded domain $\Omega_0 \subset \mathbb R^d$,  we assume that
\begin{align}
M^{-1}_1|\xi|^2 \leq  \langle A\xi, \xi \rangle\leq M_1 |\xi|^2,
\label{metric-1}
\end{align}
and
\begin{align}
|a_{ik}(\bar x_1)-a_{ik}(\bar x_2)|\leq M_2|\bar x_1-\bar x_2|,
\label{metric}
\end{align}
where $A=(a_{ik}(\bar x))_{d\times d}$ is a positive definite symmetric matrix and  $M_1, M_2$ are positive constants.
We define the doubling index for a non-trivial solution $W$ as
\begin{align*}
N(W, \tilde{B})=\log \frac{\sup_{2\tilde{B}}|W|}{\sup_{\tilde{B}}|W|},      
\end{align*}
where $\tilde{B}=\tilde{B}_r(\bar x)\subset \mathbb R^d$ is a ball centered at $\bar x$ with radius $r$,  $m\tilde{B}$ is the ball with the same center as $\tilde{B}$ and $m$ times the radius of $\tilde{B}$. The same notation applies for the cubes, e.g. $m Q^d$, in the later presentation.
We write $N( \tilde{B})$ instead of $N(W, \tilde{B})$ if the context is understood.
For the second order elliptic equation, the doubling index is almost monotonic
\begin{align*}
N(t \tilde{B})\leq N( \tilde{B})(1+c)+C,
\end{align*}
where $c$ and $C$ depend only on ${A}$, and $0<t\leq\frac{1}{2}$.
The Hausdorff content of a measurable set $E$ is defined as
\begin{align*}
\mathcal{C}^d(E)=\inf\{ \sum_{j} r_j^d: \  E\subset \cup \tilde{B}_{ r_j}(x_j)\}.
\end{align*}

We denote by $|E|$ the Lebesgue measure of the set $E$. We recall the Hausdorff content of order $d$ is comparable with the Lebesgue measure in the $d$-dimensional Euclidean space. That is,
\begin{align}
c_d|E|\leq \mathcal{C}^d(E)\leq C_d |E| \quad \mbox{for some } \ C_d, c_d>0.
\end{align}

To indicate the dimension in the estimates, we denote $Q^{d}$ as a $d$-dimensional cube with size $2$ centered at the origin. The following three-ball type inequality is given in $L^2$ norm with an explicit exponent.
\begin{lemma}
Let $W$ be the solution of (\ref{elliptic}) in $4Q^{n+3}$ satisfying (\ref{metric-1}), (\ref{metric}) with $\bar x=(x, s, t, y)$. There exist $C_1$ and $C_2$ depending on $M_1$, $M_2$, $n$ and $0<\gamma<1$ such that
\begin{align}
\|\nabla W\|_{L^\infty ( Q^{n+3})}\leq (\frac{2}{|E|})^{\frac{\gamma}{2}} \|\nabla W\|^\gamma _{L^2(E)} \|\nabla W \|^{1-\gamma}_{L^\infty( 2Q^{n+3})},
\label{logunov-mal-2-1}
\end{align}
where $\gamma=\frac{1} {\frac{1}{C_2} \ln \frac{ C_1|Q^{n+2}|  }{|E|}+1}$ and the measurable set $E\subset\frac{1}{2}Q^{n+3}\cap \{s=0\}$.
\label{lemm-5}
\end{lemma}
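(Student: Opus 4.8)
The plan is to deduce Lemma \ref{lemm-5} from the propagation-of-smallness result of Logunov--Malinnikova \cite{LM18} by a dimension-lifting trick: I cannot directly feed the bulk set $E$ (which lives on the hyperplane $\{s=0\}$, hence has zero $(n+3)$-dimensional Lebesgue measure) into \cite{LM18}, so I first thicken $E$ in one fresh variable and then relate the gradient of $W$ on the thickened set to the gradient on $E$ itself. Concretely, I would work in $4Q^{n+3}$ with coordinates $\bar x=(x,s,t,y)$ and single out, say, the variable $t$ as the ``ghost'' direction. Since $\operatorname{div}(A\nabla W)=0$ with $A$ satisfying \eqref{metric-1}--\eqref{metric}, the components $\partial_i W$ are themselves (vector) solutions of a related elliptic system, but to stay within the scalar framework of \cite{LM18} I would rather introduce the auxiliary functions and apply the LM propagation-of-smallness estimate for the \emph{gradient} directly: their theorem states that for solutions of uniformly elliptic equations with Lipschitz coefficients, $\|\nabla W\|_{L^\infty(Q)} \le C\|\nabla W\|_{L^2(\widetilde E)}^{\gamma}\|\nabla W\|_{L^\infty(2Q)}^{1-\gamma}$ for any $\widetilde E$ of positive Lebesgue measure in $\tfrac12 Q$, with $\gamma$ quantitatively controlled by $\log(|Q|/|\widetilde E|)$.

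So the first step is: apply \cite[the quantitative gradient propagation-of-smallness]{LM18} on the cube $4Q^{n+3}$ (after the usual rescaling reducing cube size $4$ to a normalized size, which only changes constants through $M_1,M_2,n$), to a set $\widetilde E$ that I will build from $E$. The second step is the construction of $\widetilde E$: given $E\subset \tfrac12 Q^{n+3}\cap\{s=0\}$, I would \emph{not} thicken in $s$ (that would interact with the equation in an uncontrolled way through the coefficients); instead I note that $E$ is already an $(n+2)$-dimensional set sitting inside the slice $\{s=0\}$, and I enlarge it by a small fixed amount, say set $\widetilde E = \{(x,s,t,y): (x,0,t,y)\in E,\ |s|<\eta\}$ for a fixed $\eta$ — no, the cleaner route is: use that $W$ and hence $\nabla W$ are real-analytic in the interior away from coefficient singularities only if coefficients are analytic, which they are not, so analyticity is unavailable; therefore the honest argument must keep $E$ flat. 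The resolution is to recall that in \cite{LM18} the bulk set is allowed to be any measurable set of positive \emph{Hausdorff content} of the top order, and more importantly the statement here has $|Q^{n+2}|$ (not $|Q^{n+3}|$) in the logarithm, which is exactly the signature that $E$ is being treated as a full-measure subset of an $(n+2)$-dimensional cube $Q^{n+2}$, after one applies the LM theorem in $\R^{n+2}$ to a function obtained from $W$ by freezing or integrating out one variable. Thus the actual plan: fix the $s$-variable to $0$; the restriction $W(\cdot,0,\cdot,\cdot)$ does not solve a nice equation, so instead I would use the lifted picture already present in the paper — $\hat\Phi$ was built precisely so that restricting to $\{s=0\}$ recovers $\phi$ and $D_s\hat\Phi(\cdot,0)=\phi$ — meaning $W$ here is to be taken as the lift $\hat\Phi$ (or a further lift), for which $\nabla W$ restricted to $\{s=0\}$ carries the information we want, and the $(n+2)$-dimensional LM estimate applies on the slice.

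Given these considerations, the cleanest writeup: (i) rescale $4Q^{n+3}$ to a unit-size configuration, absorbing scaling factors into $C,C_1,C_2$ via $M_1,M_2,n$; (ii) invoke the Logunov--Malinnikova propagation of smallness for gradients, in the form that for a uniformly elliptic divergence-form equation with Lipschitz coefficients, and for any measurable $E$ contained in a fixed sub-cube, one has the three-term inequality \eqref{logunov-mal-2-1} with $\gamma = \bigl(\tfrac{1}{C_2}\ln\tfrac{C_1|Q^{n+2}|}{|E|}+1\bigr)^{-1}$ — this is essentially the statement of their main theorem specialized to a set living on a codimension-one slice, where the relevant ``ambient measure'' is the $(n+2)$-dimensional measure of the slice cube $Q^{n+2}$; (iii) check that the uniform ellipticity constant and the Lipschitz constant of $A$ on $4Q^{n+3}$ are exactly $M_1$ and $M_2$, so all constants depend only on $M_1,M_2,n$; (iv) note $0<\gamma<1$ follows since $|E|\le |\tfrac12 Q^{n+3}|$ forces the logarithm to be bounded below appropriately, and $\gamma\to 0$ as $|E|\to 0$. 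The main obstacle is step (ii): making precise in what exact form \cite{LM18} is being quoted, since their published theorem is stated for bulk sets of positive Lebesgue measure in the full-dimensional ball, and here the set sits on a hyperplane; the honest fix is that the lemma is applied later with $W$ a \emph{lift} of $\hat\Phi$ into extra ghost variables $(t,y)$, so that $E\subset\{s=0\}$ is genuinely full-dimensional inside the slice $\{s=0\}\cong \R^{n+2}$ and one applies the $(n+2)$-dimensional LM theorem there, using elliptic estimates (Caccioppoli, as in Lemma \ref{lem-2}) to pass between the $L^\infty$ bulk norm on $4Q^{n+3}$ and the $L^\infty$ norm controlling the slice. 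I would flag that the quantitative exponent $\gamma$ is the one extracted from the effective bound on the doubling index via the almost-monotonicity $N(t\widetilde B)\le (1+c)N(\widetilde B)+C$ recorded above, iterated $\sim \ln(|Q^{n+2}|/|E|)$ times, which is precisely where the logarithmic dependence in $\gamma$ originates.
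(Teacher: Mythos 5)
Your proposal correctly identifies the right tool — the gradient propagation-of-smallness from \cite{LM18}, applied so that $E$ is treated as a full-dimensional subset of the slice $\{s=0\}$ — and you correctly flag the dimension issue (that $E$ has zero $(n+3)$-dimensional Lebesgue measure). But there is a genuine gap: you never perform the conversion from a supremum on $E$ to an $L^2$ norm on $E$, which is the actual technical content of the lemma. The result you attribute to \cite{LM18} in step (ii) — ``$\|\nabla W\|_{L^\infty(Q)}\le C\|\nabla W\|_{L^2(\widetilde E)}^{\gamma}\|\nabla W\|_{L^\infty(2Q)}^{1-\gamma}$ with $\gamma$ explicitly governed by $\log(|Q^{n+2}|/|E|)$'' — is not a statement of \cite{LM18}; it is precisely what Lemma \ref{lemm-5} asserts, so your proposal is circular at the decisive step. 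What \cite{LM18} actually gives (Lemma 5.3 there, which is what the paper quotes) is a bound in terms of $\sup_E|\nabla W|$ against a Hausdorff-content ratio raised to the power $C_2/N(\nabla W, Q^{n+3})$. To get from $\sup_E$ to $\|\cdot\|_{L^2(E)}$ the paper runs a Chebyshev-type level-set argument: from the Hausdorff-content inequality one deduces that the set of points in the slice where $|\nabla W|$ dips below a certain explicit threshold has $(n+2)$-measure $<\eps$; choosing $\eps=|E|/2$ forces at least half of $E$ to lie above the threshold, which gives a lower bound on $\int_E|\nabla W|^2$; taking square roots and then logarithms yields the explicit $\gamma$. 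None of this appears in your proposal.

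A secondary but real error is your attribution of the $\log(|Q^{n+2}|/|E|)$ dependence to ``iterating the almost-monotonicity $N(t\widetilde B)\le (1+c)N(\widetilde B)+C$ about $\ln(|Q^{n+2}|/|E|)$ times.'' In the paper's proof the almost-monotonicity of the doubling index is not invoked at all; the logarithm in $\gamma$ arises algebraically from rearranging the Chebyshev lower bound and taking logs, with the doubling index $N(\nabla W, Q^{n+3})=\log(\|\nabla W\|_{L^\infty(2Q^{n+3})}/\|\nabla W\|_{L^\infty(Q^{n+3})})$ then being absorbed into the $\|\nabla W\|_{L^\infty(4Q^{n+3})}^{1-\gamma}$ factor. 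Finally, the long digression in which you consider and reject thickening $E$ in $s$, analyticity, etc., does not ultimately commit to the concrete mechanism — positive $(n+2)$-Hausdorff content on the slice, as used in \cite[Lemma 5.3]{LM18} — that makes the argument go through.
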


\begin{proof}
We apply the propagation  of smallness of the gradient results, i.e.  Lemma 5.3 in \cite{LM18}, to have
\begin{align}
\mathcal{C}^{n+2}(E)\leq C_1\big( \frac{\sup_{E}|\nabla W|}{\sup_{Q^{n+3}}|\nabla W|}\big)^{\frac{C_2} {N(\nabla W, {Q^{n+3}})}}\mathcal{C}^{n+2}(Q^{n+3}),
\label{logu-mal}
\end{align}
where $C_1, C_2$ depend on $M_1, M_2$ and $n$.
A direct consequence is the following three-ball type inequality,
\begin{align}
\sup_{Q^{n+3}}|\nabla W|\leq C\sup_{E}|\nabla W|( \frac{C_1 \mathcal{C}^{n+2}(Q^{n+3})}{ \mathcal{C}^{n+2}(E)})^{\frac{N(\nabla W, {Q^{n+3}})}{C_2}}.
\label{logu-mal-1}
\end{align}
Since  Hausdorff content of order $n+2$ is comparable with $n+2$ dimensional Lebesgue measure, we may identify $|Q^{n+2}|=\mathcal{C}^{n+2}(Q^{n+3})$.
From (\ref{logu-mal}),  we have
\begin{align*}
|\{(x, 0, t, y)\in Q^{n+3} | \ |\nabla W|< (\frac{\eps }{C_1 |Q^{n+2}|})^{\frac{1}{C_2}\ln \frac{\|\nabla W\|_{L^\infty(2 Q^{n+3})}}{ \|\nabla W\|_{L^\infty( Q^{n+3})}}} \|\nabla W\|_{L^\infty(Q^{n+3})} \}| <\eps
\end{align*}
for any $\eps>0$.
Let $E\subset\frac{1}{2}Q^{n+3}\cap\{s=0\}$ and $\eps=\frac{|E|}{2}$.
We have
\begin{align*}
|\{(x, 0, t, y)\in Q^{n+3} | \ |\nabla W|< (\frac{|E| }{2 C_1 |Q^{n+2}|})^{\frac{1}{C_2}\ln \frac{\|\nabla W\|_{L^\infty(2 Q^{n+3})}}{ \|\nabla W\|_{L^\infty( Q^{n+3})}}} \|\nabla W\|_{L^\infty(Q^{n+3})} \}| <\frac{|E|}{2}.
\end{align*}
Then
\begin{align*}
\int_{E}|\nabla W|^2 &\geq \int_{E} \chi_{|\nabla W|\geq (\frac{|E| }{2 C_1 |Q^{n+2}|})^{\frac{1}{C_2}\ln \frac{\|\nabla W\|_{L^\infty(2 Q^{n+3})}}{ \|\nabla W\|_{L^\infty( Q^{n+3})}}} \|\nabla W\|_{L^\infty(Q^{n+3})}} |\nabla W|^2 \nonumber \\
&\geq \frac{|E| }{2} (\frac{|E| }{2 C_1 |Q^{n+2}|})^{\frac{2}{C_2}\ln \frac{\|\nabla W\|_{L^\infty(2 Q^{n+3})}}{ \|\nabla W\|^2_{L^\infty( Q^{n+3})}}} \|\nabla W\|^2_{L^\infty(Q^{n+3})}.
\end{align*}
Therefore, we have
\begin{align*}
\frac{\|\nabla W\|_{L^2(E)}} {\|\nabla W\|_{L^\infty( Q^{n+3})} }\geq  (\frac{|E| }{2C_1|Q^{n+2}|})^{\frac{1}{C_2}\ln \frac{\|\nabla W\|_{L^\infty(2 Q^{n+3})}}{ \|\nabla W\|_{L^\infty( Q^{n+3})}} +\frac{1}{2}} (C_1|Q^{n+2}|)^{\frac{1}{2}}.
\end{align*}

Taking logarithms to both sides of  the last inequality, we arrive at
\begin{align*}
\ln \frac{\|\nabla W\|_{L^\infty( Q^{n+3})}} { \|\nabla W\|_{L^2(E)} }\leq \ln \frac{ C_1|Q^{n+2}|  }{|E|} \frac{1}{C_2}\ln \frac{ \|\nabla W\|_{L^\infty(2 Q^{n+3})}}{ \|\nabla W\|_{L^\infty( Q^{n+3})}}+{\frac{1}{2}} \ln \frac{2}{|E|}.
\end{align*}
That is,
\begin{align}
\frac{\|\nabla W\|_{L^\infty( Q^{n+3})}} { \|\nabla W\|_{L^2(E)}}\leq (\frac{\|\nabla W\|_{L^\infty(2 Q^{n+3})}}{ \|\nabla W\|_{L^\infty( Q^{n+3})}})^{\frac{1}{C_2} \ln \frac{ C_1|Q^{n+2}|  }{|E|}} (\frac{2}{|E|})^{\frac{1}{2}}.
\label{gamma-i}
\end{align}
We obtain that
\begin{align*}
\|\nabla W\|^{\frac{1}{C_2} \ln \frac{ C_1|Q^{n+2}|  }{|E|}+1}_{L^\infty( Q^{n+3})}\leq  \|\nabla W\|_{L^2(E)}( \|\nabla W\|)^{\frac{1}{C_2} \ln \frac{ C_1|Q^{n+2}|  }{|E|}}_{L^\infty( 2Q^{n+3})}(\frac{2}{|E|})^{\frac{1}{2}}.
\end{align*}
Therefore, we obtain the inequality
\begin{align*}
\|\nabla W\|_{L^\infty ( Q^{n+3})}&\leq (\frac{2}{|E|})^{\frac{\gamma}{2}}\|\nabla W\|^\gamma _{L^2(E)} \|\nabla W\|^{1-\gamma}_{L^2( 2Q^{n+3})}, \nonumber\\
%& \leq C \|\nabla W\|^\gamma _{L^2(E)} \|\nabla W\|^{1-\gamma}_{L^\infty( 2Q^{n+3})} ,
\end{align*}
where $\gamma=\frac{1} {\frac{1}{C_2} \ln \frac{ C_1|Q^{n+2}|  }{|E|}+1}$. This completes the proof of the lemma.
%and $C$ depends on $M_1$, $M_2$, $n$, and $diam(Q^{n+3})$.
\end{proof}

\begin{remark}
Compared with the three-ball type inequality for gradients (i.e. Theorem 5.1) in \cite{LM18}, which was derived from Lemma 5.3, we quantitatively show how $\gamma$ depends on the measure of the sets $E$  with positive measure. The explicit form of $\gamma$ is essential in showing how the spectral inequalities rely on the density of the positive measure sets.
\end{remark}

\begin{remark}
We can decrease the value of $\gamma$ in Lemma \ref{lemm-5}. For example, 
from (\ref{gamma-i}), it holds that
\begin{align}
\frac{\|\nabla W\|_{L^\infty( Q^{n+3})}} { \|\nabla W\|_{L^2(E)}}\leq (\frac{\|\nabla W\|_{L^\infty(2 Q^{n+3})}}{ \|\nabla W\|_{L^\infty( Q^{n+3})}})^{\frac{1}{C_2} \ln \frac{ 2C_1|Q^{n+2}|  }{|E|}}(\frac{2}{|E|})^{\frac{1}{2}}.
\end{align}
Thus, we can choose $\gamma=\frac{1} {\frac{1}{C_2} \ln \frac{ 2C_1|Q^{n+2}|  }{|E|}+1}$ such that the inequality (\ref{logunov-mal-2-1}) holds. This observation will help to choose a uniform $\gamma$ in the proof of Theorem \ref{th1}.
    \label{rem-3}
\end{remark}

We will perform two strategies to incorporate $V(x)$ into the leading coefficients of some second order elliptic equations without the lower order terms. Without loss of generality, we may assume $R=20$ in Assumption (A). We first consider the region away from $\mathcal{B}_{20}$

Let $\|V\|_{C^{0, 1}(\omega)}=\|V\|_{L^\infty(\omega)}+ \|DV\|_{L^\infty(\omega)}$ for a bounded domain $\omega\subset \mathbb R^n\backslash \mathcal{B}_{20},$ which will be determined later on. We want to incorporate $V(x)$ into the leading coefficients of a second order elliptic equation. 
%See \cite{LZ22} for another strategy in getting rid of a possible large parameter. 
As the potential $V(x)\geq -C^0$. We choose some $C_0>0$ such that $V(x)\geq -C^0\geq -C_0+1$. Recall $\hat{\Phi}$ satisfies (\ref{PPhi}).
We  work on the equation
\begin{align*}
-{\triangle}\hat{\Phi} +(V(x)+5C_0+4\|V\|_{C^{0, 1}(\omega)}) \hat{\Phi}=(5C_0+4\|V\|_{C^{0, 1}(\omega)} )\hat{\Phi} \quad \mbox{in} \ \mathbb R^{n+1}.
\end{align*}
Let $$\tilde{{\Phi}}(x, s, t)= e^{i2{(\|V\|_{C^{0, 1}(\omega)}+C_0)^{\frac{1}{2}}} t } \hat{\Phi}=\sum_{-\infty<\lambda_k\leq \lambda} \alpha_k \phi_k(x) \mathcal{S}_{\lambda_k}(s)
e^{i2{(\|V\|_{C^{0, 1}(\omega)}+C_0)^{\frac{1}{2}}} t }.$$ Then
\begin{align*}
-{\triangle}\tilde{\Phi} -\frac{V(x)+5C_0+4\|V\|_{C^{0, 1}(\omega)}}{4(\|V\|_{C^{0, 1}(\omega)}+C_0)}  \partial^2_{tt}\tilde{\Phi}=(5C_0+4\|V\|_{C^{0, 1}(\omega)})\tilde{\Phi} \quad \mbox{in} \ \mathbb R^{n+2}.
\end{align*}
For conveniences of the presentation, let us introduce $\tau_0=\sqrt{5C_0+4\|V\|_{C^{0, 1}(\omega)}}$.
Furthermore, we choose
\begin{align}
{\Phi}(x, s, t, y)&= e^{\tau_0 y}\tilde{{\Phi}} \nonumber\\
&=\sum_{-\infty<\lambda_k\leq \lambda} \alpha_k \phi_k(x) \mathcal{S}_{\lambda_k}(s)
e^{i2{(\|V\|_{C^{0, 1}(\omega)}+C_0)^{\frac{1}{2}}} t } e^{\tau_0 y}.
\label{kao-1}
\end{align} 
Then
\begin{align*}
-{\triangle}{\Phi} -\frac{V(x)+5C_0+4\|V\|_{C^{0, 1}(\omega)}}{4(\|V\|_{C^{0, 1}(\omega)}+C_0)}  D^2_{tt} {\Phi}- D_{yy}^2 {\Phi}=0 \quad \mbox{in} \ \mathbb R^{n+3}.
\end{align*}
Therefore, we can write it as
\begin{align}
-{\rm div} \big ({A}(x, s, t, y)\nabla \Phi\big)=0,
\label{PHHI}
\end{align}
where
\begin{align}
{A}(x, s, t,y)=\begin{bmatrix}
    I_{n\times n}       & 0 & 0 & 0\\
 0     & 1 &  0 & 0\\
 0       & 0 & \frac{V(x)+5C_0+4\|V\|_{C^{0, 1}(\omega)} }{4({\|V\|_{C^{0, 1}(\omega)}}+C_0)}& 0 \\
 0 & 0& 0& 1
 \label{b-matrix}
\end{bmatrix}.
\end{align}

%We construct
%\begin{align}
%{\Phi} (x, s, t)= e^{i{(\|V\|_{C^{0, 1}(\omega)}+1)^{\frac{1}{2}}} t } \hat{\Phi}=\sum_{0<\lambda_k\leq \lambda} \alpha_k %\phi_k(x) \frac{\sinh(\sqrt{\lambda_k}s) }{\sqrt{\lambda_k}}e^{i{(\|V\|_{C^{0, 1}(\omega)}+1)^{\frac{1}{2}}} t }.
%\label{con
%Then
%\begin{align}
%-\triangle {\Phi}-\frac{V(x)}{(\|V\|_{C^{0, 1}(\omega)}+1)} D^2_{tt}{\Phi}=0 \quad \mbox{in} \  \omega\times \mathbb R\times \mathbb R.
%\label{Vequ}
%\end{align}
% (Attention, we need to $\|V\|_{\rm {Lip}}$ is infinity in $R^{n+2}$. We need to restrict to $B_\lambda$.)

%Furthermore, we can write (\ref{Vequ}) in the following way
%\begin{align}
%-{\rm div} \big (A(x, s, t)\nabla \Phi\big)=0,
%\label{PHII}
%\end{align}
%where
%\begin{align}
%A(x, s, t)=\begin{bmatrix}
%    I_{n\times n}       & 0 & 0 \\
% 0     & 1 &  0 \\
% 0       & 0 & \frac{V(x)}{({\|V\|_{C^{0, 1}(\omega)}}+1)}
%\end{bmatrix}.
%\end{align}
Since $V(x)$ is local Lipschitz continuous, it is easy to see that $A(x, s, t, y)$ is uniformly elliptic and local Lipschitz continuous in $ \mathbb R^{n+3}$.
We will consider the quantitative properties for $\nabla \Phi$. Direct calculations show that
\begin{align}
\nabla \Phi(x, s, t,y)=\langle &\sum_{-\infty<\lambda_k\leq \lambda} \alpha_k \nabla \phi_k(x) \mathcal{S}_{\lambda_k}(s) e^{i2(\|V\|_{C^{0, 1}(\omega)}+C_0)^{\frac{1}{2}} t} e^{\tau_0 y},  \nonumber \\ &\sum_{-\infty<\lambda_k\leq \lambda} \alpha_k  \phi_k(x) \partial_s \mathcal{S}_{\lambda_k}(s) e^{i2(\|V\|_{C^{0, 1}(\omega)}+C_0)^{\frac{1}{2}} t }e^{\tau_0 y}, \nonumber \\
& i2{(\|V\|_{C^{0, 1}(\omega)}
+C_0)^{\frac{1}{2}}}\sum_{-\infty<\lambda_k\leq \lambda} \alpha_k \phi_k(x) \mathcal{S}_{\lambda_k}(s)
e^{i2(\|V\|_{C^{0, 1}(\omega)}+C_0)^{\frac{1}{2}}t}e^{\tau_0 y},\nonumber \\
&\tau_0 \sum_{-\infty<\lambda_k\leq \lambda} \alpha_k  \phi_k(x)  \mathcal{S}_{\lambda_k}(s) e^{i(\|V\|_{C^{0, 1}(\omega)}+C_0)^{\frac{1}{2}} t }e^{\tau_0 y}\rangle.
\label{PPhi-1}
\end{align}

%Recall that ${\Phi}= e^{i{(\|V\|_{C^{0, 1}(\omega)}+C_0)^{\frac{1}{2}}} t } e^{\sqrt{C_0}y} \hat{\Phi}$. Hence
% $$\nabla \Phi(x, s, t, y)=\langle e^{i{(\|V\|_{C^{0, 1}(\omega)}+C_0)^{\frac{1}{2}}} t } e^{\sqrt{C_0}y} \nabla_{(x, s)}\hat{\Phi}, i{(\|V\|_{C^{0, 1}(\omega)}+C_0)^{\frac{1}{2}}}{\Phi}, \sqrt{C_0}{\Phi} \rangle.$$
Let $Q_j=\Lambda_2(j)$ for $j\in \mathbb Z^n$. Then we have
 \begin{align}
 \|\nabla \Phi\|^2_{L^2(4Q_j\times (-{4\rho}, {4\rho})\times (-{4\rho}, {4\rho})\times (-{4\rho}, {4\rho}))}
 &\leq 128 \rho^2 (5C_0+ 8\|V\|_{C^{0,1}(\omega)})e^{8\tau_0 \rho} \|\hat{\Phi}\|^2_{L^2(Q_j \times (-{4\rho}, {4\rho}))}
 \nonumber \\ & +128\rho^2e^{8\tau_0 \rho}\|\nabla \hat{\Phi}\|^2_{L^2( 4Q_j\times (-{4\rho}, {4\rho}))}\nonumber \\
&\leq C\rho^2(5C_0+ 8\|V\|_{C^{0,1}(\omega)}) e^{8\tau_0\rho}\|\hat{\Phi}\|^2_{H^1 ( 4Q_j\times (-{4\rho}, {4\rho}))}.
\label{recall-1}
 \end{align}
Similarly, we can obtain that
 \begin{align}
 \|\nabla \Phi\|^2_{L^2(Q_j\times (-{\rho}, {\rho})\times (-{\rho}, {\rho})\times (-{\rho}, {\rho}))}
&\geq C\rho^2 e^{-2\tau_0\rho}\|\hat{\Phi}\|^2_{H^1 ( Q_j\times (-{\rho}, {\rho}))}.
\label{recall-1-1}
 \end{align}

 Next we consider the region in $\mathcal{B}_{20}$. Notice that $V(x)$ is only bounded in the region. We aim to build a new function to get rid of $V(x)$ and construct a new second order elliptic equation without the lower order terms. As \begin{align*}
-{\triangle}\hat{\Phi} +(V(x)+C_0) \hat{\Phi}=C_0 \hat{\Phi} \quad \mbox{in} \ \mathbb R^{n+1}.
\end{align*}
We introduce $\hat{\Phi}_1= e^{\sqrt{C_0} t}\hat{\Phi} $. Then $\hat{\Phi}_1$ satisfies 
\begin{align}
-{\triangle}\hat{\Phi}_1 +(V(x)+C_0) \hat{\Phi}_1=0 \quad \mbox{in} \ \mathbb R^{n+2}.
\label{new-construction}
\end{align}

Due to (\ref{new-construction}), we consider the existence of the solution $w(x,s, t)$ for the following equation 
\begin{equation}
    \begin{aligned}
        -\triangle w +(V(x)+C_0)w = 0  \quad  \text{in } \ \mathcal{B}_{30}\times (-2, 2)\times (-2, 2).
        \label{mistake-1}
    \end{aligned}
\end{equation}
%By maximum principle, $w(x,s) >0$ in $ \mathcal{B}_{30}\times (-2, 2)$. Let us normalize $\|w\|_{L^1({ \mathcal{B}_{20}\times (-2, 2)})}=1$.  Applying the weak Harnack inequality in \cite{GT77} (Theorem 8.18), we have
%\begin{align}
%\inf_{\mathcal{B}_{25}\times (-\frac{3}{2}, \frac{3}{2})} w\geq C \|w\|_{L^1({\mathcal{B}_{30}\times (-2, 2)})}\geq C.
%\label{harnack}
%\end{align}
It is true that $1<V(x)+C_0\leq 2 C_0$ in $\mathcal{B}_{30}\times (-2, 2)\times (-2, 2)$. On one hand,  choosing $w_1=e^{\sqrt{2C_0}(x_1+\cdots+x_n+s+t)}$, then
\begin{align*}
    -\triangle w_1+(V(x)+C_0) w_1\leq 0 \ \quad  \text{in } \ \mathcal{B}_{30}\times (-2, 2)\times (-2, 2).
\end{align*}
On the other hand, letting $w_2=e^{40 n \sqrt{2C_0}}$, it holds that
\begin{align*}
    -\triangle w_2+(V(x)+C_0) w_2\geq 0 \ \quad  \text{in } \ \mathcal{B}_{30}\times (-2, 2)\times (-2, 2),
\end{align*}
since $(V(x)+C_0)>0$.
Note that  $w_1\leq w_2$. By the sub-solution and super-solution method, there exists a solution $w$ satisfying (\ref{mistake-1}) and
\begin{align}
0< e^{-40n \sqrt{2{C}_0} }\leq w_1\leq w\leq w_2 \leq e^{40 n \sqrt{2{C}_0}}.
\label{harnack}
\end{align}
We introduce a new function $\bar \Phi(x, s)=\frac{\hat{\Phi}_1}{w}$, where $\hat{\Phi}_1$ is given in (\ref{new-construction}). Then $\bar \Phi$ satisfies the equation 
\begin{align}
-{\rm{div}}(w^2 \nabla \bar \Phi)=0    \quad  \text{in } \ \mathcal{B}_{30}\times (-2, 2)\times (-2, 2).
\end{align}
By (\ref{harnack}) and standard elliptic estimates for solutions $w$ in (\ref{mistake-1}), we have
\begin{align}
    C^{-1}\leq \| w^2\|_{L^\infty({\mathcal{B}_{25}\times (-\frac{3}{2}, \frac{3}{2})\times (-\frac{3}{2}, \frac{3}{2})})} \leq C
    \label{elli-1}
\end{align}
and
\begin{align}
    \| w\|_{C^{0,1}(\mathcal{B}_{20}\times (-1, 1)\times (-1, 1))}\leq C \| w\|_{L^\infty({\mathcal{B}_{25}\times (-\frac{3}{2}, \frac{3}{2})\times (-\frac{3}{2}, \frac{3}{2})})}\leq C.
        \label{elli-2}
\end{align}
Thus,
\begin{align}
    \| w^2\|_{C^{0,1}(\mathcal{B}_{20}\times (-1, 1)\times (-1, 1))}\leq C \| w\|^2_{L^\infty({\mathcal{B}_{25}\times (-\frac{3}{2}, \frac{3}{2})\times (-\frac{3}{2}, \frac{3}{2})})}\leq C.
    \label{elli-3}
\end{align}

We consider the equation 
\begin{align*}
-{\rm{div}}(w^2 \nabla \bar \Phi)-\hat{C}_0 \bar \Phi+\hat{C}_0 \bar \Phi=0   
\end{align*}
for some large constant $\hat{C}_0>0$, which to be determined and is used to control the norm of $w$. Let $\tilde{\Phi}= e^{\sqrt{\hat{C}_0} y}\bar{\Phi}$. Then
\begin{align*}
-{\rm{div}}(w^2 \nabla \tilde{\Phi})-\partial_{yy}\tilde{\Phi}+ {\hat{C}_0} \tilde{\Phi}=0  \quad \text{in} \ \mathcal{B}_{30}\times (-2, 2)\times (-2, 2)\times \mathbb{R}.
\end{align*}
Furthermore, choose ${\Phi}_1(x, s, t, y,\tau )= e^{i \sqrt{{\hat{C}_0}}  \tau}\tilde{\Phi}.$
Then
\begin{align*}
    -{\rm{div}}(w^2 \nabla {\Phi}_1)-\partial_{yy}{\Phi}_1-\partial_{\tau\tau} {\Phi}_1=0 \quad \text{in} \ \mathcal{B}_{30}\times (-2, 2)\times (-2, 2)\times \mathbb{R}\times \mathbb{R}.
\end{align*}
We can write the last equation as
\begin{align}
 -{\rm{div}}(A(x,s, t, y,\tau)\nabla {\Phi}_1)=0, 
 \label{second-elli}
\end{align}
where
\begin{align}
{A}(x, s, t,y,\tau)=\begin{bmatrix}
    w^2(x, s,t) I_{n\times n}       & 0 & 0 & 0& 0\\
 0     & w^2(x, s, t) &  0 & 0 & 0\\
  0     & 0 & w^2(x, s, t) &  0 & 0 \\
 0    & 0   & 0 & 1& 0 \\
 0 & 0 & 0& 0& 1
\end{bmatrix}.
\label{b-matrix-1}
\end{align}

Note that
\begin{align*}
    \Phi_1(x,s, t,y,\tau)&=  e^{\sqrt{{C}_0} t} e^{\sqrt{\hat{C}_0} y} e^{i\sqrt{\hat{C}_0} \tau} \hat{\Phi} w^{-1}\nonumber \\&= e^{\sqrt{{C}_0} t} e^{\sqrt{\hat{C}_0} y} e^{i\sqrt{\hat{C}_0} \tau} w^{-1} \sum_{-\infty<\lambda_k\leq \lambda} \alpha_k \phi_k(x) \mathcal{S}_{\lambda_k}(s).
\end{align*}

Direct calculations show that
\begin{align*}
\nabla \Phi_1=&\langle  e^{\sqrt{{C}_0} t} e^{\sqrt{\hat{C}_0} y} e^{i\sqrt{\hat{C}_0} \tau} (\nabla_x \hat{\Phi} w^{-1}-
\hat{\Phi} w^{-2}\nabla_x w), \nonumber \\
 &  e^{\sqrt{{C}_0} t} e^{\sqrt{\hat{C}_0} y} e^{i\sqrt{\hat{C}_0} \tau}(\partial_s \hat{\Phi} w^{-1}-\hat{\Phi} w^{-2} \partial_s w), \nonumber  \\
  &  e^{\sqrt{{C}_0} t} e^{\sqrt{\hat{C}_0} y} e^{i\sqrt{\hat{C}_0} \tau}\hat{\Phi} (\sqrt{{C}_0} w^{-1}- w^{-2} \partial_t w), \nonumber  \\
&  \sqrt{\hat{C}_0} e^{\sqrt{{C}_0} t} e^{\sqrt{\hat{C}_0} y} e^{i\sqrt{\hat{C}_0} \tau} \hat{\Phi} w^{-1}, \nonumber  \\ 
&i\sqrt{\hat{C}_0} e^{\sqrt{{C}_0} t} e^{\sqrt{\hat{C}_0} y} e^{i\sqrt{\hat{C}_0} \tau} \hat{\Phi} w^{-1}\rangle.
\end{align*}
Let $0<\rho<1$. Using estimates for  $w$ and $\nabla w$ in (\ref{elli-1}) and (\ref{elli-2}), we have
\begin{align*}
&\|\nabla \Phi_1\|^2_{L^2(Q_j\times (-\rho, \rho)\times (-\rho, \rho) \times (-\rho, \rho)\times (-\rho, \rho))} \nonumber \\
&\geq C\| e^{\sqrt{{C}_0} t} e^{\sqrt{\hat{C}_0} y} w^{-1}\nabla \hat{\Phi} \|^2_{L^2(Q_j\times (-\rho, \rho)\times (-\rho, \rho) \times (-\rho, \rho)\times (-\rho, \rho))} \nonumber \\ &-C_1 \| e^{\sqrt{{C}_0} t} e^{\sqrt{\hat{C}_0} y}  \hat{\Phi} w^{-2}\nabla w\|_{L^2(Q_j\times (-\rho, \rho)\times (-\rho, \rho) \times (-\rho, \rho)\times (-\rho, \rho))} \nonumber \\
&+\hat{C}_0 \| e^{\sqrt{{C}_0} t} e^{\sqrt{\hat{C}_0} y}   \hat{\Phi} w^{-1}\|^2_{L^2(Q_j\times (-\rho, \rho)\times (-\rho, \rho) \times (-\rho, \rho)\times (-\rho, \rho))} \nonumber \\
& \geq C\|  e^{\sqrt{{C}_0} t} e^{\sqrt{\hat{C}_0} y}  \nabla \hat{\Phi} \|^2_{L^2(Q_j\times (-\rho, \rho)\times (-\rho, \rho) \times (-\rho, \rho)\times (-\rho, \rho))} \nonumber \\& + C\|  e^{\sqrt{{C}_0} t} e^{\sqrt{\hat{C}_0} y}  \hat{\Phi} \|^2_{L^2(Q_j\times (-\rho, \rho)\times (-\rho, \rho) \times (-\rho, \rho)\times (-\rho, \rho))}
\end{align*}
by choosing $\hat{C}_0$ large enough. Thus, we have 
\begin{align}
\|\nabla \Phi_1\|^2_{L^2(Q_j\times (-\rho, \rho)\times (-\rho, \rho) \times (-\rho, \rho)\times (-\rho, \rho))}
    \geq C_1 \| \hat{\Phi} \|^2_{H^1(Q_j\times (-\rho, \rho))}.
    \label{brain-1}
\end{align}
Similarly, we can also verify that 
\begin{align}
\|\nabla \Phi_1\|^2_{L^2(Q_j\times (-\rho, \rho)\times (-\rho, \rho) \times (-\rho, \rho)\times (-\rho, \rho))}
    \leq C_2 \| \hat{\Phi} \|^2_{H^1(Q_j\times (-\rho, \rho))}.
    \label{brain-2}
\end{align}

We will apply the Lemma \ref{lemm-5} to derive some quantitative three-ball type results for $\Phi$ in the equation (\ref{PHHI}) and $\Phi_1$ in the equation (\ref{second-elli}).
We will consider $\nabla \Phi$ on a half space $\{(x, 0, t, y)\}\cap \mathbb R^{n+3}$. We introduce 
\begin{align} \hat{\phi}(x, t, y)= e^{i2{(\|V\|_{C^{0, 1}(\omega)}+C_0)^{\frac{1}{2}}} t } e^{\tau_0 y} \phi.
\label{why-1}
\end{align}
Note that \begin{align} \nabla \Phi(x, 0,t, y)=\langle 0, \hat{\phi}(x,t, y) , 0, 0 \rangle. \label{dean} \end{align}
Similarly we will consider $\nabla \Phi_1$ on a half space $\{(x, 0, t, y,\tau)\}\cap \mathcal{B}_{30}\times (-2, 2)\times (-2, 2)\times \mathbb{R}\times \mathbb{R} $.
Let \begin{align}
    \hat{\phi}_1(x,0,t,y,\tau)= e^{\sqrt{{C}_0} t} e^{\sqrt{\hat{C}_0} y} e^{i\sqrt{\hat{C}_0} \tau}
\phi w^{-1}.
\label{why-2}\end{align}
It is true that
$$\nabla \Phi_1(x,0,t,y,\tau)=\langle 0, \hat{\phi}_1, 0, 0, 0\rangle.$$ 
Recall that $Q^{n+4}$ denotes the $n+4$ dimensional cube. Let the measurable set $E_1\subset \frac 1 2 Q^{n+4}\cap\{s=0\}.$
\begin{corollary}
 Assume $\Phi$ satisfies the equation (\ref{PHHI}), $\Phi_1$ satisfies the equation (\ref{second-elli}) and their leading coefficients (\ref{b-matrix}), (\ref{b-matrix-1}) satisfies (\ref{metric-1}) and (\ref{metric}).
Let $\hat{\phi}$ be given in (\ref{why-1}) and $\hat{\phi}_1$ be given in (\ref{why-2}). Then
  \begin{align}
\|\nabla \Phi\|_{L^2 ( Q^{n+3})}\leq C (\frac{|Q^{n+4}|}{|E|})^{\frac{\gamma}{2}} \|\hat{\phi}\|^\gamma _{L^2(E)} \|\nabla \Phi\|^{1-\gamma}_{L^2( 4Q^{n+3})},
\label{logunov-mal-2}
\end{align}  
and 
  \begin{align}
\|\nabla \Phi_1\|_{L^2 ( Q^{n+4})}\leq C (\frac{|Q^{n+4}|}{|E|})^{\frac{\gamma}{2}} \|\hat{\phi}_1\|^\gamma _{L^2(E_1)} \|\nabla \Phi_1\|^{1-\gamma}_{L^2( 4Q^{n+4})},
\label{logunov-mal-2-3}
\end{align}
\label{cor-1-1}
where $\gamma=\frac{1} {\frac{1}{C_2} \ln \frac{ C_1|Q^{n+3}|  }{|E_1|}+1}$ if $|E_1|=|E|.$
\end{corollary}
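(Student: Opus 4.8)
The plan is to read off both inequalities directly from Lemma \ref{lemm-5}, applied once to the elliptic equation (\ref{PHHI}) in the $(n+3)$ variables $\bar x=(x,s,t,y)$ and once to (\ref{second-elli}) in the $(n+4)$ variables $\bar x=(x,s,t,y,\tau)$, together with the pointwise reduction of the gradient on the slice $\{s=0\}$ that was recorded just before the statement. The only real bookkeeping is to check that the exponent produced by Lemma \ref{lemm-5} can be taken to be the \emph{same} number $\gamma$ in the two ambient dimensions, which is where Remark \ref{rem-3} and the relation $|Q^{n+3}|=2|Q^{n+2}|$ enter.

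\emph{Step 1: the slice identities.} Since $\mathcal{S}_{\lambda_k}(0)=0$ and $\partial_s\mathcal{S}_{\lambda_k}(0)=1$ in all three cases of the definition of $\mathcal{S}_{\lambda_k}$, differentiating (\ref{kao-1}) term by term shows that the $x$-, $t$- and $y$-components of $\nabla\Phi$ vanish on $\{s=0\}$ while the $s$-component equals $e^{i(\|V\|_{C^{0,1}(\omega)}+C_0)^{1/2}t}e^{\sqrt{C_0}y}\phi=\hat\phi$ (cf. (\ref{why-1})), i.e. $\nabla\Phi(x,0,t,y)=\langle 0,\hat\phi,0,0\rangle$; hence $\|\nabla\Phi\|_{L^2(E)}=\|\hat\phi\|_{L^2(E)}$ for every $E\subset\{s=0\}$. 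The same computation for $\Phi_1$ gives $\nabla\Phi_1(x,0,t,y,\tau)=\langle 0,\hat\phi_1,0,0,0\rangle$ (cf. (\ref{why-2})) and $\|\nabla\Phi_1\|_{L^2(E_1)}=\|\hat\phi_1\|_{L^2(E_1)}$ for $E_1\subset\{s=0\}$; here the extra factor $w^{-1}(x,s,t)$ is harmless, because every term of $\nabla\Phi_1$ that differentiates $w$ still carries the factor $\mathcal{S}_{\lambda_k}(s)$, which vanishes at $s=0$.

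\emph{Step 2: invoke Lemma \ref{lemm-5}.} By hypothesis the matrices (\ref{b-matrix}) and (\ref{b-matrix-1}) are symmetric and satisfy the ellipticity and Lipschitz bounds (\ref{metric-1})--(\ref{metric}), so Lemma \ref{lemm-5} (more precisely, the $L^2$-form of its conclusion obtained in the penultimate display of its proof, the explicit prefactor being absorbed into $C$) applies with $W=\Phi$ on $4Q^{n+3}$ and with $W=\Phi_1$ on $4Q^{n+4}$, yielding $\|\nabla\Phi\|_{L^\infty(Q^{n+3})}\le C\|\nabla\Phi\|^{\gamma}_{L^2(E)}\|\nabla\Phi\|^{1-\gamma}_{L^2(2Q^{n+3})}$ and its analogue for $\Phi_1$. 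Bounding $\|\nabla\Phi\|_{L^2(Q^{n+3})}\le|Q^{n+3}|^{1/2}\|\nabla\Phi\|_{L^\infty(Q^{n+3})}$ on the left, enlarging $\|\nabla\Phi\|_{L^2(2Q^{n+3})}\le\|\nabla\Phi\|_{L^2(4Q^{n+3})}$ on the right, and substituting $\|\nabla\Phi\|_{L^2(E)}=\|\hat\phi\|_{L^2(E)}$ from Step 1 gives (\ref{logunov-mal-2}); the identical three manipulations give (\ref{logunov-mal-2-3}).

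\emph{Step 3: a common exponent, and the main obstacle.} In ambient dimension $n+3$ Lemma \ref{lemm-5} produces $\gamma=\big(\frac{1}{C_2}\ln\frac{C_1|Q^{n+2}|}{|E|}+1\big)^{-1}$, while in ambient dimension $n+4$ the slice $\{s=0\}$ is $(n+3)$-dimensional, so it produces $\big(\frac{1}{C_2}\ln\frac{C_1|Q^{n+3}|}{|E_1|}+1\big)^{-1}$. By Remark \ref{rem-3} the estimate for $\Phi$ survives replacing $C_1$ by $2C_1$ inside the logarithm, and since $|Q^{n+3}|=2|Q^{n+2}|$ this rewrites its exponent as $\big(\frac{1}{C_2}\ln\frac{C_1|Q^{n+3}|}{|E|}+1\big)^{-1}$, which equals the exponent for $\Phi_1$ precisely when $|E_1|=|E|$; hence both estimates hold with the single $\gamma$ of the statement. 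I do not expect a genuine difficulty beyond this: the one sensitive point, that the ellipticity and Lipschitz constants $M_1,M_2$ of (\ref{b-matrix}) and (\ref{b-matrix-1}) — and therefore $C,C_1,C_2$ — stay under control, has been folded into the hypotheses, and is exactly what the normalizations by $\|V\|_{C^{0,1}(\omega)}+C_0$ and by $w$ in the constructions of $\Phi$ and $\Phi_1$ were arranged to guarantee.
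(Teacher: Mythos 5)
Your Steps 1 and 3 are correct and match the paper (the slice identities and the Remark~\ref{rem-3} device to unify the two exponents). Step 2 has a genuine gap: you read the first line of the ``Therefore'' display in the proof of Lemma~\ref{lemm-5} as asserting
\[
\|\nabla W\|_{L^\infty(Q^{n+3})}\le C\|\nabla W\|^{\gamma}_{L^2(E)}\|\nabla W\|^{1-\gamma}_{L^2(2Q^{n+3})},
\]
with an $L^2$ norm on the right, and then just enlarge $2Q^{n+3}$ to $4Q^{n+3}$. But that line contains a typo: comparing it with the display immediately above it (which has $\|\nabla W\|_{L^\infty(2Q^{n+3})}$) and with its own second line (which goes from $L^2(2Q^{n+3})$ to $L^\infty(2Q^{n+3})$ without any pointwise bound) makes clear that the correct right-hand side is $\|\nabla W\|^{1-\gamma}_{L^\infty(2Q^{n+3})}$. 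What Lemma~\ref{lemm-5} actually gives, both in its statement and in its corrected proof, is an $L^\infty$ norm on the right, and replacing $L^\infty$ by $L^2$ there goes in the wrong direction, since on a bounded set the $L^\infty$ norm controls the $L^2$ norm, not conversely.

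The missing ingredient, which the paper supplies as estimate~(\ref{cacc-2}), is the interior elliptic gradient bound together with the special structure of $\nabla\Phi$. Since $\Phi$ solves the divergence-form equation~(\ref{PHHI}) with Lipschitz leading coefficients, interior regularity gives $\|\nabla\Phi\|_{L^\infty(2Q^{n+3})}\le C\|\Phi\|_{L^2(4Q^{n+3})}$; and because $D_y\Phi=\sqrt{C_0}\,\Phi$ by the explicit form~(\ref{PPhi-1}), one also has $\|\Phi\|_{L^2(4Q^{n+3})}\le C\|\nabla\Phi\|_{L^2(4Q^{n+3})}$. Only with these two steps can one convert the $L^\infty(2Q^{n+3})$ factor produced by Lemma~\ref{lemm-5} into the $L^2(4Q^{n+3})$ factor in~(\ref{logunov-mal-2}). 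The same is needed for $\Phi_1$, using $D_y\Phi_1=\sqrt{\hat{C}_0}\,\Phi_1$. Your phrase ``the explicit prefactor being absorbed into $C$'' does not cover this; the issue is not a multiplicative constant but a change of norm that requires using the equation.
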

\begin{proof}

From the explicit expression of $\nabla \Phi$ in (\ref{PPhi-1}), and the elliptic estimates for $\Phi$ in (\ref{PHHI}),
we get
\begin{align}
\|\nabla\Phi\|_{L^\infty (2Q^{n+3})}\leq \frac{C}{|Q^{n+3}|^{\frac{1}{2}}}  \|\Phi\|_{L^2 (4Q^{n+3})}\leq  \frac{C}{|Q^{n+3}|^{\frac{1}{2}}}\|\nabla\Phi\|_{L^2 (4Q^{n+3})}.
\label{cacc-2}
\end{align}
Hence, it follows from (\ref{logunov-mal-2-1}) and (\ref{dean}) that 
\begin{align}
\|\nabla \Phi\|_{L^2 ( Q^{n+3})}\leq C (\frac{|Q^{n+3}|}{|E|})^{\frac{\gamma_1}{2}}  \|\hat{\phi}\|^{\gamma_1}_{L^2(E)} \|\nabla \Phi\|^{1-\gamma_1}_{L^2( 4Q^{n+3})},
\label{logunov-mal-3}
\end{align}
where $\gamma_1=\frac{1} {\frac{1}{C_2} \ln \frac{ C_1|Q^{n+2}|  }{|E|}+1}$, and $C$ depends on $M_1$, $M_2$, $n$.
By the same arguments, we can show that
  \begin{align*}
\|\nabla \Phi_1\|_{L^2 ( Q^{n+4})}\leq C (\frac{|Q^{n+4}|}{|E|})^{\frac{\gamma}{2}}   \|\hat{\phi}_1\|^\gamma _{L^2(E_1)} \|\nabla \Phi_1\|^{1-\gamma}_{L^2( 4Q^{n+4})},
\end{align*}
where $\gamma=\frac{1} {\frac{1}{C_2} \ln \frac{ C_1|Q^{n+3}|  }{|E_1|}+1}$. Note that $|Q^{n+2}|=2^{n+2}$ and  $|Q^{n+3}|=2^{n+3}$.
Thanks to Remark \ref{rem-3}, we can change $\gamma_1$ to be $\gamma$ if $|E_1|=|E|.$
\end{proof}
%%\begin{align*}
%\|\nabla \Phi\|^{\frac{1}{C_2} \ln \frac{ C_1|Q^{n+2}|  }{|E|}+1}_{L^2 ( Q^{n+3})}\leq C(C_1|Q^{n+2}|)^{\frac{1}{2}} \|\hat{\phi}\|_{L^2(E)} \|\nabla \Phi\|^{\frac{1}{C_2}\ln \frac{ C_1|Q^{n+2}|  }{|E|}}_{L^2( 4Q^{n+3})}
%\end{align*}
%Therefore, we obtain the inequality
%\begin{align*}
%\|\nabla \Phi\|_{L^2 ( Q^{n+3})}&\leq (C_1|Q^{n+2}|)^{\frac{1}{2\gamma}}\|\hat{\phi}\|^\gamma _{L^2(E)} \|\nabla \Phi\|^{1-\gamma}_{L^2( 4Q^{n+3})} \nonumber\\
%& \leq C \|\hat{\phi}\|^\gamma _{L^2(E)} \|\nabla \Phi\|^{1-\gamma}_{L^2( 4Q^{n+3})} ,
%\end{align*}
% C depends on $1/r$ and r is size of Q.
Now we are ready to give the proof of Theorem \ref{th1}.

%From (\ref{double-2}), we have
%\begin{align}
%N(\nabla \Phi, \mathcal{Q}_j)\leq C\sqrt{\lambda}.
%\end{align}

\begin{proof}[Proof of Theorem \ref{th1}]
We will apply the quantitative three-ball type inequality (\ref{logunov-mal-2-3}) and (\ref{logunov-mal-2}). We first consider a region near origin.
We introduce $\Omega_j=\Omega\cap \Lambda_1(j)$ and $Q_j=\Lambda_2(j)$.
Let $\mathcal{E}_j=\Omega_j\times \{0\}\times (-\frac{\rho}{2}, \frac{\rho}{2} )\times (-\frac{\rho}{2}, \frac{\rho}{2} ) \times (-\frac{\rho}{2}, \frac{\rho}{2} )$ and
$\mathcal{Q}_j=Q_j\times (-{\rho},{\rho} )\times  (-{\rho},{\rho} )\times  (-{\rho},{\rho} )\times  (-{\rho},{\rho} )$. By rescaling estimates, e.g. by considering $\Phi_1(x, \frac{s}{\rho}, \frac{t}{\rho}, \frac{y}{\rho}, \frac{\tau}{\rho})$, we can identify the cubes $\mathcal{Q}_j$ with the standard cube $Q^{n+4}$ with size $2$,  and the measurable set $\mathcal{E}_j$ with the set $E_1$. By fixing the value of $\rho$, these rescalings only change  $A(x,s,t,y,
\tau)$  in (\ref{b-matrix-1}) by some constant. Thus, we apply the three-ball type inequality (\ref{logunov-mal-2-3}) in $\mathcal{Q}_j$ and $\mathcal{E}_j$.
To this end, we verify that the conditions  (\ref{metric-1}) and (\ref{metric}) holds for $A(x,s,t,y, \tau)$ in  (\ref{second-elli}). 
%We are going to apply the conclusion in Lemma \ref{lemm-5} near the origin. Recall that  $\Omega_j=\Omega\cap \Lambda_1(j)$ and $Q_j=\Lambda_2(j)$, $\mathcal{E}_j=\Omega_j\times \{0\}\times (-\frac{\rho}{2}, \frac{\rho}{2} )\times (-\frac{\rho}{2}, \frac{\rho}{2} ) $ and
From estimates (\ref{elli-1}), (\ref{elli-2}) and (\ref{elli-3}), we know that the leading coefficients $A(x, s,t, y, \tau)$ in (\ref{second-elli}) satisfies (\ref{metric-1}) and (\ref{metric}) for some constants $M_1$ and $M_2$. Thus,  the three-ball type inequality (\ref{logunov-mal-2-3}) in Corollary \ref{cor-1-1} holds for $\Phi_1$,
\begin{align}
\| \nabla \Phi_1\|_{L^2 ( \mathcal{Q}_j)}\leq C (\frac{|\mathcal{Q}_j|}{|\mathcal{E}_j|})^{\frac{\gamma}{2}}  \|\hat{\phi}_1\|^\gamma _{L^2(\mathcal{E}_j)} \|\nabla \Phi_1\|^{1-\gamma}_{L^2( 4\mathcal{Q}_j)},
\label{285}
\end{align}
where $\gamma =\frac{1}{\frac{1}{C_2}\ln \frac{C_1|\mathcal{Q}_j|^{\frac{n+3}{n+4}}}{|\mathcal{E}_j|}+1}.$ Note that $|Q^{n+3}|$ in Corollary \ref{cor-1-1} is the Lebesgue measure for the cube $Q^{n+3}$. Here $|\mathcal{Q}_j|$ denotes the Lebesgue measure for $\mathcal{Q}_j$ in $\mathbb R^{n+4}$.
%where $$\nabla \Phi_1(x,0,t,y)= \hat{\phi}_1(x,t,y)=e^{\sqrt{\hat{C}_0}t}e^{i\sqrt{\hat{C}_0}y}\phi w^{-1}$$ and $4\mathcal{Q}_j\subset \mathcal{B}_{20}\times (-4\rho, 4\rho)\times (-4\rho, 4\rho) \times (-4\rho, 4\rho)\times (-4\rho, 4\rho)$.

As it can be see from (\ref{brain-1}) and (\ref{brain-2}) that the $L^2$ norm of $\Phi_1$ and the $H^1$ norm of $\hat{\Phi}$ are comparable. Therefore, it follows from (\ref{285}) that
\begin{align}
\| \hat{\Phi}\|_{H^1 ({Q}_j\times (-\rho, \rho))}\leq C (\frac{|\mathcal{Q}_j|}{|\mathcal{E}_j|})^{\frac{\gamma}{2}}   \|{\phi}\|^\gamma _{L^2(\Omega_j)} \|\hat{\Phi}\|^{1-\gamma}_{H^1 (4{Q}_j\times (-4\rho, 4\rho))}
\label{287}
\end{align}   
for $4{Q}_j\subset \mathcal{B}_{20}$ since  $$\|\hat{\phi}_1 \| _{L^2(\mathcal{E}_j)} \leq C e^{\sqrt{\hat{C}_0}\rho}\|{\phi}\| _{L^2(\Omega_j)}.$$ 
%Since $V(x)=|x|^{\beta_1}$ satisfies the Assumption (A) for $x\in 4{Q}_j\subset \mathbb R^n \backslash \mathcal{B}_{20}$,
%it is true that (\ref{287}) holds for $4{Q}_j\subset \mathbb R^n \backslash \mathcal{B}_{20}$
%Now we know that (\ref{287}) holds near the origin and away from the origin. 

%Let $E=\mathcal{E}_j$ and $Q^{n+3}=\mathcal{Q}_j$. 
%By the assumption (A), $\|V\|_{C^{0, 1}(\mathcal{B}_{20})}\leq C$. We can set $\omega=\mathcal{B}_{20}$.
%If $4Q^{n+3}\subset\mathcal{B}_{20}\times (-4\rho, 4\rho)\times (-4\rho, 4\rho)\times (-4\rho, 4\rho)$, the conditions (\ref{metric-1}) and (\ref{metric}) are satisfied for $A(x,s,t,y)$ in (\ref{b-matrix}).  Then (\ref{logunov-mal-2}) is applicable. We have
%\begin{align}
%\| \nabla \Phi\|_{L^2 ( \mathcal{Q}_j)}\leq C \|\hat{\phi}\|^\gamma _{L^2(\mathcal{E}_j)} \|\nabla \Phi\|^{1-\gamma}_{L^2( 4\mathcal{Q}_j)}.
%\end{align}

Next we consider the region away from $\mathcal{B}_{20}$. Let $\mathcal{E}^1_j=\Omega_j\times \{0\}\times (-\frac{\rho}{2}, \frac{\rho}{2} )\times (-\frac{\rho}{2}, \frac{\rho}{2} ) \times (-\frac{\rho}{2}, \frac{\rho}{2} )$ and
$\mathcal{Q}^1_j=Q_j\times (-{\rho},{\rho} )\times  (-{\rho},{\rho} )\times  (-{\rho},{\rho} )\times  (-{\rho},{\rho} )$.  By rescaling estimates,  we still identify the cubes $\mathcal{Q}^1_j$ with the standard cube $Q^{n+3}$ with size $2$,  the measurable set $\mathcal{E}^1_j$ with the set $E$ in Corollary \ref{cor-1-1}.
Let $\omega=\Lambda_{10}(j)$ for $|j|\geq 7$. If $4\mathcal{Q}^1_j \subset(\mathcal{B}_{\hat{C}\lambda^{\frac{1}{\beta_1}}}\backslash\mathcal{B}_{20})\times (-4\rho, 4\rho)\times (-4\rho, 4\rho)\times (-4\rho, 4\rho)$, by the Assumption (A), we get
\begin{align*}
   \frac{3}{4}\leq \frac{V(x)+5C_0+4\|V\|_{C^{0, 1}(\omega)}}{4({\|V\|_{C^{0, 1}(\omega)}}+C_0)}\leq \frac{5}{4}
\end{align*}
and
\begin{align*}
  \frac{| \nabla V(x)|}{4({\|V\|_{C^{0, 1}(\omega)}}+C_0)}\leq \frac{1}{4}
\end{align*}
%\begin{align*}
%c_1 (|j|-6)^{\beta_1}-C^0\leq \|V\|_{C^{0, 1}(\omega)}\leq c_2(|j|+6)^{\beta_1}.
%\end{align*}
%\begin{align*}
% \frac{c_1}{4c_2}\leq  \frac{c_1(|j|-6)^{\beta_1}+1} {c_2(|j|+6)^{\beta_1}+C_0}\leq   \frac{V(x)+C_0}{\|V\|_{C^{0, 1}(\omega)} +C_0}\leq 1
%\end{align*}
for $x\in \omega$ and any $|j|$ large.
Thus, from (\ref{b-matrix}), the matrix $A$ in (\ref{b-matrix}) implies that the conditions (\ref{metric-1}) and (\ref{metric}) are satisfied for some $M_2$ and $M_2$ as well by changing the constants accordingly. From (\ref{logunov-mal-2}) in Corollary \ref{cor-1-1}, we have
\begin{align}
\| \nabla \Phi\|_{L^2 ( \mathcal{Q}^1_j)}\leq C (\frac{|\mathcal{Q}_j|}{|\mathcal{E}_j|})^{\frac{\gamma}{2}} \|\hat{\phi}\|^\gamma _{L^2(\mathcal{E}^1_j)} \|\nabla \Phi\|^{1-\gamma}_{L^2( 4\mathcal{Q}^1_j)}.
\label{sum-three}
\end{align}
Observe that
\begin{align}
    \|\hat{\phi}\|^2 _{L^2(\mathcal{E}^1_j)} \leq C\rho^2 e^{\tau_0\rho}\|{\phi}\|^2_{L^2 (\Omega_j)}.
    \label{recall-1-2}
\end{align}
It follows from (\ref{sum-three}), (\ref{recall-1}), (\ref{recall-1-1}) and (\ref{recall-1-2}) that
\begin{align*}
\|\hat{\Phi}\|_{H^1 ( Q_j\times (-{\rho}, {\rho}))}\leq C(5C_0+ 8\|V\|_{C^{0,1}(\omega)}) e^{10\tau_0\rho} (\frac{|\mathcal{Q}_j|}{|\mathcal{E}_j|})^{\frac{\gamma}{2}} \|{\phi}\|^\gamma_{L^2 (\Omega_j)} \|\hat{\Phi}\|^{1-\gamma}_{H^1 ( 4Q_j\times (-{4\rho}, {4\rho}))}.
\end{align*}
We have chosen $\omega =\Lambda_{10}(j)\subset \mathcal{B}_{\hat{C}\lambda^{1/\beta_1}}$. By the assumption (A),  it holds that
$$\|V\|_{C^{0,1}(\omega)}\leq C(1+\lambda)^{\frac{\beta_2}{\beta_1}}.$$ Thus, from the definition of $\tau_0$, we get
\begin{align}
\|\hat{\Phi}\|_{H^1 ( Q_j\times (-{\rho}, {\rho}))}\leq C(1+\lambda)^{\frac{\beta_2}{\beta_1}} e^{C \lambda^{\frac{\beta_2}{2\beta_1}}\rho} (\frac{|\mathcal{Q}_j|}{|\mathcal{E}_j|})^{\frac{\gamma}{2}} \|{\phi}\|^\gamma_{L^2 (\Omega_j)} \|\hat{\Phi}\|^{1-\gamma}_{H^1 ( 4Q_j\times (-{4\rho}, {4\rho}))}.
\label{sum-three-1}
\end{align}
Taking (\ref{287}) into considerations, the estimates (\ref{sum-three-1}) hold for any $4{Q}_j\times (-{4\rho}, {4\rho} ) \subset\mathcal{B}_{4\hat{C}\lambda^{1/\beta_1}}\times (-{4\rho}, {4\rho} )$. Note that 
\begin{align*}\frac {1}{\gamma}=\frac{1}{C_2}\ln \frac{C_1|Q^{n+3}|}{|{E}|}-1=\frac{1}{C_2}\ln \frac{C_1|\mathcal{Q}_j|^{\frac{n+3}{n+4}}}{|\mathcal{E}_j|}-1.
\end{align*}
From the definition of $\Omega_j$ in (\ref{geom-om}) and $|j|\leq C\lambda^{\frac{1}{\beta_1}}$, we get that
$$\frac{|\mathcal{Q}_j|}{|\mathcal{E}_j|}\leq (\frac{1}{\delta})^{1+\lambda^{\frac{\sigma}{\beta_1}}} $$
and
$$\frac {1}{\gamma}\leq \frac{1}{C_2} \ln C_1 (\frac{1}{\delta})^{1+\lambda^{\frac{\sigma}{\beta_1}}}.$$
We can choose 
\begin{align}
    \gamma= {C_2} \big(\ln C_1 (\frac{1}{\delta})^{1+\lambda^{\frac{\sigma}{\beta_1}}}\big)^{-1}
    \label{kao-1}
\end{align}
as (\ref{sum-three-1}) still holds.
We cover $ \mathcal{B}_{\hat{C}\lambda^{1/\beta_1}}\times (-{\rho}, {\rho} )$ by a union of ${Q}_j\times (-{\rho}, {\rho} )$ such that
\begin{align*}
\mathcal{B}_{\hat{C}\lambda^{1/\beta_1}}\times (-{\rho}, {\rho} ) \subset \cup_{j} {Q}_j\times (-{\rho}, {\rho} ).
\end{align*}

It is easy to see that
\begin{align*}
 \cup_{j} 4{Q}_j\times (-{4\rho}, {4\rho} ) \subset \mathcal{B}_{4\hat{C}\lambda^{1/\beta_1}}\times (-{4\rho}, {4\rho} ).
\end{align*}
The following H\"older's inequality holds
\begin{align}
\sum_j a_j^\gamma b_j^{1-\gamma}\leq (\sum_j a_j  )^\gamma (\sum_j b_j)^{1-\gamma}
\label{holder}
\end{align}
for $0<\gamma<1$ and $a_j, b_j>0$. 
Let 
\begin{align*}
a_j=\|\hat{\phi}\|_{L^2(\Omega_j)}, \quad  b_j=\|\hat{ \Phi}\|_{H^1( 4{Q}_j\times (-{4\rho}, {4\rho} ) )}.
\end{align*}
Taking the summation in (\ref{sum-three-1}) over a union of ${Q}_j\times (-{\rho}, {\rho} )$ yields that
\begin{align*}
\| \hat{\Phi} \|_{H^1 (\mathcal{B}_{\hat{C}\lambda^{1/\beta_1}}\times (-{\rho}, {\rho} ))}\leq &C(1+\lambda)^{\frac{\beta_2}{\beta_1}} e^{C \lambda^{\frac{\beta_2}{2\beta_1}}\rho} (\frac{1}{\delta})^{\frac{\gamma}{2}(1+\lambda^{\frac{\sigma}{\beta_1}})} \|\hat{\phi}\|^\gamma _{L^2(\Omega\cap \mathcal{B}_{4\hat{C}\lambda^{1/\beta_1}})} 
 \|\hat{\Phi}\|^{1-\gamma}_{H^1(  \mathcal{B}_{4\hat{C}\lambda^{1/\beta_1}}\times (-{4\rho}, {4\rho} ) )},
\end{align*}
where $ \Omega=\cup \Omega_j $.
Thanks to the doubling inequality (\ref{kao-niu}), we get
\begin{align*}
\|\hat{\Phi} \|_{H^1 (\mathcal{B}_{\hat{C}\lambda^{1/\beta_1}}\times (-{\rho}, {\rho} ))}\leq& C(1+\lambda)^{\frac{\beta_2}{\beta_1}} e^{C \lambda^{\frac{\beta_2}{2\beta_1}}\rho} e^{\sqrt{\lambda}(1-\gamma)}(\frac{1}{\delta})^{\frac{\gamma}{2}(1+\lambda^{\frac{\sigma}{\beta_1}})} \|{\phi}\|^\gamma _{L^2(\Omega)} \| \hat{\Phi} \|^{1-\gamma}_{H^1 (\mathcal{B}_{\hat{C}\lambda^{1/\beta_1}}\times (-{\rho}, {\rho} ))}.
\end{align*}
Therefore, from the fact that $\frac{\beta_2}{2\beta_1}\geq \frac{1}{2}$,  we have
\begin{align*}
\|\hat{\Phi} \|_{H^1 (\mathcal{B}_{\hat{C}\lambda^{1/\beta_1}}\times (-{\rho}, {\rho} ))}  &\leq C e^{\frac{C\lambda^{\frac{\beta_2}{2\beta_1}}}{\gamma}}
e^{\frac{C\sqrt{\lambda}(1-\gamma)}{\gamma}} (\frac{1}{\delta})^{\frac{1}{2}(1+\lambda^{\frac{\sigma}{\beta_1}})} \|{\phi}\|_{L^2(\Omega)}\nonumber \\
&\leq C e^{\frac{C\lambda^{\frac{\beta_2}{2\beta_1}}}{\gamma}}
(\frac{1}{\delta})^{C(1+\lambda^{\frac{\sigma}{\beta_1}})}\|{\phi}\|_{L^2(\Omega)}
\end{align*}
where $C$ depends on $\rho$, which is fixed.
It follows from (\ref{kao-1}) that
\begin{align*}
\|\hat{\Phi} \|_{H^1 (\mathcal{B}_{\hat{C}\lambda^{1/\beta_1}}\times (-{\rho}, {\rho} ))}  \leq C (\frac{1}{\delta})^{C \lambda^{\frac{\sigma}{\beta_1}+\frac{\beta_2}{2\beta_1}}}  \|{\phi}\|_{L^2(\Omega)}.
\end{align*}
%Recall that ${\Phi}= e^{i{(\|V\|_{C^{0, 1}(\omega)}+1)^{\frac{1}{2}}} t } e^{\sqrt{C_0}y} \hat{\Phi}$,  it holds that
%\begin{align}
%\| \nabla \Phi\|^2_{L^2 (\mathcal{B}_{\hat{C}\lambda^{1/\beta_1}}\times (-{\rho}, {\rho} ) \times (-{\rho}, {\rho} )  \times (-{\rho}, %{\rho} ))}\geq 4\rho^2 e^{-\sqrt{C_0}\rho} \| \hat{\Phi}\|^2_{L^2 (\mathcal{B}_{\hat{C}\lambda^{1/\beta_1}}\times (-{\rho}, {\rho} ) )}.
%\end{align}
%It is also true that
% $$\|\hat{\phi}\|_{L^2(\Omega\times (-\frac{\rho}{2}, \frac{\rho}{2})\times (-\frac{\rho}{2}, \frac{\rho}{2}))}\leq \rho^2 e^{\sqrt{C_0}\rho}\|{\phi}\|_{L^2(\Omega)} .$$ Thus, we obtain that
%\begin{align}
%\| \hat{\Phi}\|_{L^2 (\mathcal{B}_{\hat{C}\lambda^{1/\beta_1}}\times (-{\rho}, {\rho} ) )}\leq C (\frac{1}{\delta})^{C %\lambda^{\frac{\sigma}{\beta_1}+\frac{1}{2}}}
% \|{\phi}\|_{L^2(\Omega)}
%\end{align}
%for $\lambda\geq 1$.
Thanks to (\ref{crucial-cut}) in Lemma \ref{lem-2}, we have
\begin{align*}
\|\hat{\Phi}\|_{H^1 (\mathbb R^n\times (-\frac{\rho}{2}, \frac{\rho}{2}))} \leq C (\frac{1}{\delta})^{C \lambda^{\frac{\sigma}{\beta_1}+\frac{\beta_2}{2\beta_1}}}
 \|{\phi}\|_{L^2(\Omega)}.
\end{align*}
At last, applying the first inequality in (\ref{com-phi}) in Lemma \ref{lemma-1} and fixing the value of $\rho>0$, we arrive at
\begin{align*}
 \|\phi\|_{L^2(\mathbb R^n)}\leq C (\frac{1}{\delta})^{C \lambda^{\frac{\sigma}{\beta_1}+\frac{\beta_2}{2\beta_1}}}
 \|{\phi}\|_{L^2(\Omega)}.
\end{align*}
This completes the proof of Theorem \ref{th1}.
\end{proof}

\section{Proof of the second type spectral inequality }
This section is devoted to the proof of Theorem \ref{th2}. We will adapt the strategy in the proof of Theorem \ref{th1}. Using spectral measure $dP_\lambda$, we construct
\begin{align}
\check{f}=\int^\mu_{-\infty}\mathcal{S}_\lambda(s) dP_{\lambda}f,
\label{check-f}
\end{align}
where
\begin{equation}
\mathcal{S}_\lambda(s)=\left \{
\begin{array}{lll}
\frac{\sinh(\sqrt{\lambda}s)}{\sqrt{\lambda}}, \quad &\lambda>0, \nonumber \\
s, &\lambda=0, \nonumber \\
\frac{\sinh (i\sqrt{-\lambda}s)}{i\sqrt{-\lambda}}, \quad &\lambda<0.
\end{array}
\right.
\end{equation}
Then we obtain
\begin{equation*}
-\triangle \check{f}-\partial_{ss} \check{f}+V(x)\check{f}=0 \quad \quad \mbox{in} \ \mathbb{R}^{n+1}.
\end{equation*}
Since $\|V\|_{L^\infty}\leq C^0=C_0-1$ for some positive constant $C_0$, we have $1\leq C_0+V(x)\leq 2C_0$. We write the equations as 
\begin{equation*}
-\triangle \check{f}-\partial_{ss} \check{f}+(C_0+V(x))\check{f}-C_0 \check{f}=0 \quad \quad \mbox{in} \  \mathbb{R}^{n+1}.
\end{equation*}
Let $\tilde{f}=e^{\sqrt{C_0}t} \check{f}$.
Then
\begin{align*}
-\triangle \tilde{f}-\partial_{ss} \tilde{f} -\partial_{tt}\tilde{f} + (C_0+V(x)) \tilde{f}=0  \quad \quad  \mbox{in} \ \mathbb{R}^{n+2}.
\end{align*}

We construct  the solution  $w(x,s, t)$ in the following equation 
\begin{equation}
    \begin{aligned}
        -\triangle w +(C_0+V(x)) w &= 0  \quad  &\text{in }& \ \mathcal{B}_{30}(j)\times (-2, 2)\times (-2, 2),
        \label{mistake-1-1}
    \end{aligned}
\end{equation}
where $\mathcal{B}_{30}(j)$ is a ball centered at $j\in \mathbb Z^n$ with radius $30$. We will let $j$ change in the later proof.
Let $w_1= e^{\sqrt{2C_0}(x_1-j_1+ x_x-j_2\cdots+x_n-j_n+s+t)}$. It holds that
\begin{align*}
    -\triangle w_1+(C_0+V(x)) w_1\leq 0 \quad \text{in }\ \mathcal{B}_{30}(j)\times (-2, 2)\times (-2, 2).
\end{align*}
Let $w_2= e^{\sqrt{2C_0}}$. It is known that
\begin{align*}
    -\triangle w_2+(C_0+V(x)) w_2\geq 0 \quad \text{in }\ \mathcal{B}_{30}(j)\times (-2, 2)\times (-2, 2).
\end{align*}

Note that  $w_1\leq w_2$. By the sub-solution and super-solution method, there exists a solution $w$ satisfying (\ref{mistake-1-1}) and
\begin{align}
0< e^{-40n \sqrt{2C_0} }\leq w_1\leq w\leq w_2 \leq e^{40 n\sqrt{2C_0} }.
\label{harnack-1}
\end{align}

%By maximum principle, $w>0$ in $ \mathcal{B}_{30}(j)\times (-2, 2)\times (-2, 2)$. Let us normalize $\|w\|_{L^1({ \mathcal{B}_{30}(j)\times (-2, 2)\times (-2, 2)})}=1$. Since $V(x)\in L^\infty$, by elliptic regularity estimates, $w\in C^{1, \alpha}$ in $\mathcal{B}_{30}(j)\times (-2, 2)\times (-2, 2)$ for some $0<\alpha<1$. Applying the weak Harnack inequality in \cite{GT77} (Theorem 8.18), we have
%\begin{align}
%\inf_{\mathcal{B}_{25}(j)\times (-\frac{3}{2}, \frac{3}{2}) \times (-\frac{3}{2}, \frac{3}{2})} w\geq C \|w\|_{L^1({\mathcal{B}_{30}(j)\times (-2, 2)\times (-2, 2)})}\geq C.
%\label{harnack-1}
%\end{align}
We consider a new function $ f_1(x, s, t)=\frac{\tilde{f}}{w}$. Then $ f_1$ satisfies the equation 
\begin{align*}
-{\rm{div}}(w^2 \nabla f_1)=0    \quad  \text{in } \ \mathcal{B}_{30}(j)\times (-2, 2)\times (-2, 2).
\end{align*}
By (\ref{harnack-1}) and standard elliptic estimates for (\ref{mistake-1-1}),  it holds that
\begin{align}
    C^{-1}\leq \| w^2\|_{L^\infty({\mathcal{B}_{25}(j)\times (-\frac{3}{2}, \frac{3}{2})\times (-\frac{3}{2}, \frac{3}{2})})} \leq C
    \label{elli-1-1}
\end{align}
and 
\begin{align}
    \| w^2\|_{C^{0,1}(\mathcal{B}_{20}(j)\times (-1, 1)\times (-1, 1))}\leq C \| w\|^2_{L^\infty({\mathcal{B}_{25}(j\times (-\frac{3}{2}, \frac{3}{2})\times (-\frac{3}{2}, \frac{3}{2})})}\leq C.
    \label{elli-2-1}
\end{align}

To control the norm of $w$, we consider the equation 
\begin{align*}
-{\rm{div}}(w^2 \nabla f_1)-\hat{C}_0 f_1+\hat{C}_0 f_1=0   
\end{align*}
for some large constant $\hat{C}_0>0$, which to be determined. Let $f_2= e^{\sqrt{\hat{C}_0} y}{f}_1$. Then
\begin{align*}
-{\rm{div}}(w^2 \nabla f_2 )-\partial_{yy}f_2+ {\hat{C}_0} f_2=0  \quad \text{in} \ \mathcal{B}_{30}(j)\times (-2, 2)\times (-2, 2)\times \mathbb{R}.
\end{align*}

%\begin{align}
%-{\rm{div}}(w^2 \nabla \tilde{\Phi})-\partial_{yy}\tilde{\Phi}+ {\hat{C}_0} \tilde{\Phi}=0  \quad \text{in} \ \mathcal{B}_{20}\times (-2, %2)\times \mathbb{R}.
%\end{align}
Furthermore, denote $\bar f(x, s, t, y, \tau )= e^{i \sqrt{{\hat{C}_0}} \tau }f_2.$
We have
\begin{align*}
    -{\rm{div}}(w^2 \nabla \bar f)-\partial_{yy}\bar f-\partial_{\tau\tau} \bar f=0 \quad \text{in} \ \mathcal{B}_{30}(j)\times (-2, 2)\times (-2, 2)\times \mathbb{R}\times \mathbb{R}.
\end{align*}
The last equation can be written as
\begin{align}
 -{\rm{div}}(A(x,s, t, y,\tau)\nabla \bar f)=0, 
 \label{second-elli-1-1}
\end{align}
where
\begin{align}
{A}(x, s, t,y, \tau)=\begin{bmatrix}
    w^2(x, s, t)I_{n\times n}       & 0 & 0 & 0& 0\\
 0     & w^2(x, s, t) &  0 & 0& 0 \\
  0     & 0 &w^2(x, s, t) &  0 & 0\\
 0       & 0 & 0&1& 0 \\
 0 & 0& 0& 0&1
\end{bmatrix}.
\end{align}

%Let $\tilde{f}=e^{(\|V\|_{C^{0, 1}}+C_0)^{1/2}t} \check{f}$.
%Then
%\begin{align}
%-\triangle \tilde{f}-\partial_{ss} \tilde{f}-\frac{C_0-V(x)}{(\|V\|_{C^{0, 1}}+C_0)^{1/2} }\partial_{tt}\tilde{f}+C_0 \tilde{f}=0  \quad %\quad  \mbox{in} \ \mathbb{R}^{n+2}.
%\end{align}
%Finally, letting $\bar{f}= e^{i C_0^{1/2} y} \tilde{f}$ gives that
%\begin{align}
%-\triangle \bar{f}-\partial_{ss} \bar{f}-\frac{C_0-V(x)}{\|V\|_{C^{0, 1}}+C_0 }\partial_{tt} \bar{f}-\partial_{yy} \bar{f}=0  \quad \quad  %\mbox{in} \ \mathbb{R}^{n+3}.
%\end{align}
%Thus, we can write the last equation as
%\begin{align}
%-{\rm div} \big ({A}(x, s, t, y)\nabla \bar{f}\big)=0,
%\label{second-ell}
%\end{align}
%where
%\begin{align}
%{A}(x, s, t, y)=\begin{bmatrix}
%    I_{n\times n}       & 0 & 0 & 0\\
% 0     & 1 &  0 & 0\\
% 0       & 0 & \frac{C_0-V(x)}{{\|V\|_{C^{0, 1}}}+C_0}& 0 \\
% 0 & 0& 0& 1
%\end{bmatrix},
%\end{align}
It holds that
\begin{align}
\bar{f}(x, s, t, y,
\tau)=  w^{-1} e^{ \sqrt{C_0}  t} e^{ \sqrt{\hat{C_0} } y} e^{ i \sqrt{\hat{C_0} } \tau}\int^\mu_{-\infty}\mathcal{S}_\lambda(s) d P_{\lambda}f.
\label{bar-f}
\end{align}

From the construction of $\bar f$,  direct calculations show that
\begin{align}
\nabla \bar f(x, s, t, y,\tau)=&\langle e^{ \sqrt{C_0}  t} e^{ \sqrt{\hat{C_0} } y} e^{ i \sqrt{\hat{C_0} } \tau} (w^{-1}\nabla_x \int^\mu_{-\infty}\mathcal{S}_\lambda(s) d P_{\lambda}f- w^{-2} \nabla_x w \int^\mu_{-\infty}\mathcal{S}_\lambda(s) d P_{\lambda}f),  \nonumber\\
& e^{ \sqrt{C_0}  t} e^{ \sqrt{\hat{C_0} } y} e^{ i \sqrt{\hat{C_0} } \tau} (w^{-1}\partial_s \int^\mu_{-\infty}\mathcal{S}_\lambda(s) d P_{\lambda}f- w^{-2} \partial_s w \int^\mu_{-\infty}\mathcal{S}_\lambda(s) d P_{\lambda}f),  \nonumber  \\
&e^{ \sqrt{C_0}  t} e^{ \sqrt{\hat{C_0} } y} e^{ i \sqrt{\hat{C_0} } \tau} (w^{-1} \sqrt{C_0}- w^{-2} \partial_t w )\int^\mu_{-\infty}\mathcal{S}_\lambda(s) d P_{\lambda}f, \nonumber  \\
& \sqrt{\hat{C_0}}  \bar f,  \
 i \sqrt{\hat{C_0}} \bar f \rangle
\label{nabla-bar-f}
\end{align}
and 
\begin{align}
\nabla \bar f(x, 0, t, y, 
\tau) =& \langle0,  \ 
 w^{-1} e^{ \sqrt{C_0}  t} e^{ \sqrt{\hat{C_0} } y} e^{ i \sqrt{\hat{C_0} } \tau} \int^\mu_{-\infty}  d P_{\lambda}f, 0, \ 0, \ 0\rangle.
\end{align}
Let $0<\rho<1$. Recall that $Q_j=\Lambda_2(j)$. It is clear that $6Q_j\subset \mathcal{B}_{20}(j)$. The estimates for  $w$ and $\nabla w$ in (\ref{elli-1-1}) and (\ref{elli-2-1}) imply that
\begin{align*}
&\|\nabla \bar f \|^2_{L^2(Q_j\times (-\rho, \rho)\times (-\rho, \rho) \times (-\rho, \rho)\times (-\rho, \rho))} \nonumber\\
&\geq C\| e^{\sqrt{{C}_0} t} e^{\sqrt{\hat{C}_0} y} w^{-1} \nabla \int^\mu_{-\infty}\mathcal{S}_\lambda(s) d P_{\lambda}f   \|^2_{L^2(Q_j\times (-\rho, \rho)\times (-\rho, \rho) \times (-\rho, \rho)\times (-\rho, \rho))} \nonumber \\ &-C_1 \|e^{\sqrt{{C}_0} t} e^{\sqrt{\hat{C}_0} y}  \int^\mu_{-\infty}\mathcal{S}_\lambda(s) d P_{\lambda}f   w^{-2}\nabla w\|_{L^2(Q_j\times (-\rho, \rho)\times (-\rho, \rho) \times (-\rho, \rho) \times (-\rho, \rho)))} \nonumber \\
&+\hat{C}_0 \|e^{\sqrt{{C}_0} t} e^{\sqrt{\hat{C}_0} y}  \int^\mu_{-\infty}\mathcal{S}_\lambda(s) d P_{\lambda}f  
 w^{-1}\|^2_{L^2(Q_j\times (-\rho, \rho)\times (-\rho, \rho) \times (-\rho, \rho)\times (-\rho, \rho))} \nonumber \\
& \geq C\| e^{\sqrt{{C}_0} t} e^{\sqrt{\hat{C}_0} y} \nabla \int^\mu_{-\infty}\mathcal{S}_\lambda(s) d P_{\lambda}f\|^2_{L^2(Q_j\times (-\rho, \rho)\times (-\rho, \rho) \times (-\rho, \rho)\times (-\rho, \rho))} \nonumber \\& + C\| e^{\sqrt{{C}_0} t} e^{\sqrt{\hat{C}_0} y} \int^\mu_{-\infty}\mathcal{S}_\lambda(s) d P_{\lambda}f  \|^2_{L^2(Q_j\times (-\rho, \rho)\times (-\rho, \rho) \times (-\rho, \rho)\times (-\rho, \rho))}
\end{align*}
by choosing $\hat{C}_0$ large enough. Furthermore, we have 
\begin{align}
\|\nabla  \bar f\|^2_{L^2(Q_j\times (-\rho, \rho)\times (-\rho, \rho) \times (-\rho, \rho) \times (-\rho, \rho))}
    \geq C_1 \| \int^\mu_{-\infty}\mathcal{S}_\lambda(s) d P_{\lambda}f \|^2_{H^1(Q_j\times (-\rho, \rho))},
    \label{brain-1-1}
\end{align}
where $C_1$ depends on the fixed constant $\rho$ and the constants in Assumption (B).
The similar arguments also yield that
\begin{align}
\|\nabla  \bar f\|^2_{L^2(Q_j\times (-\rho, \rho)\times (-\rho, \rho) \times (-\rho, \rho))}
    \leq C_2 \| \int^\mu_{-\infty}\mathcal{S}_\lambda(s) d P_{\lambda}f \|^2_{H^1(Q_j\times (-\rho, \rho))}.
    \label{brain-2-1}
\end{align}

Next we show that the $L^2$ norm of the spectral projection $\mathbb{I}_{\mu}(f)$ and the $H^1$ norm of $\int^\mu_{-\infty}\mathcal{S}_\lambda(s) dP_{\lambda}f$ are comparable, which is stated in the following lemma.
\begin{lemma}
There exists a positive constant $C$ such that 
\begin{align}
C\rho\|\mathbb{I}_{\mu}(f)\|^2_{L^2(\mathbb R^n)} &\leq \|  \int^\mu_{-\infty}\mathcal{S}_\lambda(s) dP_{\lambda}f\|^2_{H^1(\mathbb R^n\times (-\rho, \rho))}\nonumber \\
&\leq C\rho(1+{\rho^2}(1+|\mu|) )
e^{2\rho (\sqrt{|\mu|}+1)} \|\mathbb{I}_{\mu}(f)\|^2_{L^2(\mathbb R^n)}
\label{compare-in}
\end{align}
for any small $\rho>0$.
\label{lem-com}
\end{lemma}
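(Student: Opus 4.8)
The plan is to evaluate every term of $\|\check f\|_{H^1(\mathbb R^n\times(-\rho,\rho))}^2$ — where $\check f(x,s)=\int_{-\infty}^\mu\mathcal S_\lambda(s)\,dP_\lambda f$ — by means of the functional calculus of the self-adjoint operator $H=-\triangle+V$, in the same spirit as the (omitted) proof of Lemma \ref{lemma-1}. Write $d\nu(\lambda)=d(P_\lambda f,f)$; this is a finite positive measure, supported in $[-C^0,\mu]$ since Assumption (B) forces $\sigma(H)\subset[-C^0,\infty)$, with total mass $\nu((-\infty,\mu])=\|\mathbb I_\mu(f)\|_{L^2(\mathbb R^n)}^2$. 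Because $\mathcal S_\lambda(s)$ is entire in $\lambda$ and, together with $\lambda\mapsto\lambda\,\mathcal S_\lambda(s)$, bounded on the compact set $[-C^0,\mu]$, the function $\check f(\cdot,s)$ lies in $D(H)\subset H^2(\mathbb R^n)$ for every $s$, one may differentiate under the spectral integral, and $\partial_s\check f(\cdot,s)=\int_{-\infty}^\mu\mathcal C_\lambda(s)\,dP_\lambda f$ with $\mathcal C_\lambda:=\partial_s\mathcal S_\lambda$.

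The spectral theorem then yields, for every $s$,
\[
\|\check f(\cdot,s)\|_{L^2(\mathbb R^n)}^2=\int_{-\infty}^\mu\mathcal S_\lambda(s)^2\,d\nu(\lambda),\qquad \|\partial_s\check f(\cdot,s)\|_{L^2(\mathbb R^n)}^2=\int_{-\infty}^\mu\mathcal C_\lambda(s)^2\,d\nu(\lambda),
\]
while, using $\|\nabla_x g\|_{L^2}^2=(Hg,g)-(Vg,g)$ with $g=\check f(\cdot,s)$ and $(H\check f(\cdot,s),\check f(\cdot,s))=\int_{-\infty}^\mu\lambda\,\mathcal S_\lambda(s)^2\,d\nu(\lambda)$, one gets $\|\nabla_x\check f(\cdot,s)\|_{L^2}^2=\int_{-\infty}^\mu\lambda\,\mathcal S_\lambda(s)^2\,d\nu(\lambda)-(V\check f(\cdot,s),\check f(\cdot,s))$, with $|(V\check f(\cdot,s),\check f(\cdot,s))|\le C^0\|\check f(\cdot,s)\|_{L^2}^2$ by Assumption (B). Summing these three quantities and integrating in $s$ over $(-\rho,\rho)$ gives an exact spectral representation of $\|\check f\|_{H^1(\mathbb R^n\times(-\rho,\rho))}^2$.

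For the upper bound I insert the elementary estimates valid on $\supp\nu\subset[-C^0,\mu]$: $|\mathcal S_\lambda(s)|\le|s|\,e^{(\sqrt{|\mu|}+1)|s|}$, $|\mathcal C_\lambda(s)|\le e^{(\sqrt{|\mu|}+1)|s|}$, and $\lambda\,\mathcal S_\lambda(s)^2\le e^{2(\sqrt{|\mu|}+1)|s|}$ when $0<\lambda\le\mu$ while $\lambda\,\mathcal S_\lambda(s)^2\le 0$ when $\lambda\le0$ (hyperbolic growth occurs only for $\lambda\in(0,\mu]$, where $\sqrt\lambda\le\sqrt{|\mu|}$; for $\lambda\le0$ the blocks $\mathcal S_\lambda,\mathcal C_\lambda$ are trigonometric, bounded by $|s|$ and $1$). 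Pulling $\nu((-\infty,\mu])$ out and integrating in $s$ gives
\[
\|\check f\|_{H^1(\mathbb R^n\times(-\rho,\rho))}^2\le\Bigl(\int_{-\rho}^{\rho}\bigl(2+(1+C^0)s^2\bigr)e^{2(\sqrt{|\mu|}+1)|s|}\,ds\Bigr)\|\mathbb I_\mu(f)\|_{L^2(\mathbb R^n)}^2\le C\rho\bigl(1+\rho^2(1+|\mu|)\bigr)e^{2\rho(\sqrt{|\mu|}+1)}\|\mathbb I_\mu(f)\|_{L^2(\mathbb R^n)}^2,
\]
which is the right-hand inequality. For the lower bound I keep only the $\partial_s$-term, so $\|\check f\|_{H^1(\mathbb R^n\times(-\rho,\rho))}^2\ge\int_{-\infty}^{\mu}\bigl(\int_{-\rho}^{\rho}\mathcal C_\lambda(s)^2\,ds\bigr)d\nu(\lambda)$; since $\mathcal C_\lambda(s)^2=\cosh^2(\sqrt\lambda s)\ge1$ for $\lambda\ge0$, and $\mathcal C_\lambda(s)^2=\cos^2(\sqrt{-\lambda}\,s)\ge\cos^2(\sqrt{C^0}\,\rho)\ge\tfrac12$ for $-C^0\le\lambda<0$ provided $\rho$ is small enough that $\sqrt{C^0}\,\rho\le\pi/4$, we get $\int_{-\rho}^{\rho}\mathcal C_\lambda(s)^2\,ds\ge\rho$ for all $\lambda\le\mu$, hence $\|\check f\|_{H^1(\mathbb R^n\times(-\rho,\rho))}^2\ge\rho\,\|\mathbb I_\mu(f)\|_{L^2(\mathbb R^n)}^2$. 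Relabelling constants finishes the proof.

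The one point requiring care — rather than a genuine obstacle — is the rigorous use of the functional calculus: one must know $\check f(\cdot,s)\in D(H)$ (so that $(H\check f(\cdot,s),\check f(\cdot,s))$ equals $\int\lambda\,\mathcal S_\lambda(s)^2\,d\nu$) and that $\partial_s$ commutes with $\int dP_\lambda$, both of which follow from the spectral cutoff at $\mu$ making $\nu$ compactly supported so that $\mathcal S_\lambda(s)$, $\mathcal C_\lambda(s)$ and $\lambda\,\mathcal S_\lambda(s)$ are bounded on $\supp\nu$. The only case distinction is the usual one between the at most finitely many bounded negative eigenvalues in $[-C^0,0)$, where the building blocks are trigonometric and actually help in the lower bound, and the positive spectrum, where they are hyperbolic and produce the factor $e^{2\rho\sqrt{|\mu|}}$; smallness of $\rho$ is used only to bound $\cos^2(\sqrt{C^0}\,s)$ from below on $(-\rho,\rho)$.
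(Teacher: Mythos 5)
Your proof is correct and follows essentially the same route as the paper: it uses the spectral representation of the three pieces of the $H^1$ norm (with the spatial gradient handled via $\|\nabla_x\check f\|^2=(H\check f,\check f)-(V\check f,\check f)$, which is the integration-by-parts step the paper performs), then bounds the building blocks $\mathcal S_\lambda$, $\mathcal C_\lambda$, $\lambda\mathcal S_\lambda^2$ pointwise on $\supp\nu\subset[-C^0,\mu]$, and keeps only the $\partial_s$-piece for the lower bound using $\cosh^2\ge1$ and $\cos^2\ge\tfrac12$ for small $\rho$. The only cosmetic difference is that you treat the spectrum uniformly through the compactly supported measure $\nu$, which avoids the paper's separate discussion of $\mu<0$ and of individual negative eigenvalues.
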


\begin{proof}
If $\mu<0$, the estimate (\ref{compare-in}) is clear from the definition of $\mathcal{S}_\lambda(s)$ and the fact that there are only a finite number of negative eigenvalue $\lambda$. See Lemma \ref{lemma-1} or the argument in \cite{ZZ23}. Let us focus on $\mu>0$
We first study the properties of $\cosh$ and $\sinh$. We have
\begin{align}
|\sinh (\sqrt{\lambda} s)|\leq |\sqrt{\lambda} s \cosh(\sqrt{\lambda}  s)|,
\label{calcu-1}
\end{align}
and
\begin{align}
1\leq \cosh( \sqrt{\lambda}  s)\leq e^{\sqrt{\lambda}  |s|}
\label{calcu-2}
\end{align}
for $\lambda\geq 0$. For $\lambda<0$, there are a finite number of eigenvalues $\lambda_i$ such that $\lambda_i>-C$ for some positive $C$ depending on $V(x)$. We choose some small positive  $\rho_0>0$ such that $\cos(\sqrt{-\lambda}\rho_0)=c_0$ for some positive constant $0<c_0<1$.
Thus, 
\begin{align*}
|\frac{\sinh(i\sqrt{-\lambda}s)}{i\sqrt{-\lambda}}|= |\frac{\sin(\sqrt{-\lambda}s)}{\sqrt{-\lambda}}|\leq |s|
\end{align*}
and 
\begin{align}
c_0< |\cosh(i\sqrt{-\lambda}s)|=|\cos(\sqrt{-\lambda}s)|<1
\label{315}
\end{align}
for $|s|<\rho\leq 100 \rho_0$.
Let us prove the first inequality in (\ref{compare-in}).  Note that there are only a finite number of eigenvalue $\lambda$ such that $\lambda<0$ due to the boundedness of $V(x)$. By the properties of $\cosh$, we get
\begin{align*}
\| \int^\mu_{-\infty} \partial_s \mathcal{S}_\lambda(s) dP_{\lambda}f\|^2_{L^2(\mathbb R^n\times (-\rho, \rho))}
&\geq \| \int^\mu_{0} \cosh(\sqrt{\lambda }s) dP_{\lambda}f\|^2_{L^2(\mathbb R^n\times (-\rho, \rho))}\nonumber \\
&+\| \int^0_{-\infty} \cosh(i\sqrt{-\lambda }s) dP_{\lambda}f\|^2_{L^2(\mathbb R^n\times (-\rho, \rho))} \nonumber \\
&\geq 2\rho \|\int^\mu_{0}  dP_{\lambda}f\|^2_{L^2(\mathbb R^n)}\nonumber \\
&+\| \int^0_{-\infty} \cos(\sqrt{-\lambda}s) dP_{\lambda}f\|^2_{L^2(\mathbb R^n\times (-\rho, \rho))}\nonumber \\
&\geq C\rho \|\mathbb{I}_{\mu}(f)\|^2_{L^2(\mathbb R^n)},
\end{align*}
where we have chosen $\rho$ small such that $\cos(\sqrt{-\lambda}\rho)>c_0$ as in (\ref{315}). Thus, we achieve 
\begin{align*}
C\rho\|\mathbb{I}_{\mu}(f)\|_{L^2(\mathbb R^n)} \leq \|  \int^\mu_{-\infty}\mathcal{S}_\lambda(s) dP_{\lambda}f\|_{H^1(\mathbb R^n\times (-\rho, \rho))}.
\end{align*}

Next we estimate the second inequality in (\ref{compare-in}). 
We split the integration in term of positive spectrum $\lambda$ and negative spectrum $\lambda$. It follows from (\ref{calcu-1}) that
\begin{align}
\| \int^\mu_{-\infty} \mathcal{S}_\lambda(s) dP_{\lambda}f\|^2_{L^2(\mathbb R^n)}
& \leq \| \int^\mu_{0} \frac{\sinh(\sqrt{\lambda} s)}{\sqrt{\lambda}} dP_{\lambda}f\|^2_{L^2(\mathbb R^n)} +\| \int^{0}_{-\infty} |s| dP_{\lambda}f\|^2_{L^2(\mathbb R^n)}  \nonumber \\
& \leq (1+e^{2|\mu||s|}s^2 )\|\mathbb{I}_{\mu}(f)\|^2_{L^2(\mathbb R^n)}.
\label{3.15}
\end{align}
Then integrating with respect to $s$ variable gives that
\begin{align}
\| \int^\mu_{-\infty} \mathcal{S}_\lambda(s) dP_{\lambda}f\|^2_{L^2(\mathbb R^n\times (-\rho, \rho))}& \leq C\rho^3 e^{2\rho|\mu|} \|\mathbb{I}_{\mu}(f)\|^2_{L^2(\mathbb R^n)}.
\label{integration-l2}
\end{align}
We further consider the $L^2$ norm with respect to $s$ derivative. By the property (\ref{calcu-2}), we get
\begin{align*}
\| \int^\mu_{-\infty} \partial_s \mathcal{S}_\lambda(s) dP_{\lambda}f\|^2_{L^2(\mathbb R^n)} 
& \leq \| \int^\mu_{0} \cosh(\sqrt{\lambda} s) dP_{\lambda}f\|^2_{L^2(\mathbb R^n)}  +\| \int^{0}_{-\infty}1  dP_{\lambda}f\|^2_{L^2(\mathbb R^n)}   \nonumber \\
& \leq e^{2|\mu||s|} \|\mathbb{I}_{\mu}(f)\|_{L^2(\mathbb R^n)}.
\end{align*}
Thus, 
\begin{align}
\| \int^\mu_{-\infty} \partial_s \mathcal{S}_\lambda(s) dP_{\lambda}f\|^2_{L^2(\mathbb R^n\times (-\rho, \rho))} \leq 2\rho e^{2|\mu|\rho} \|\mathbb{I}_{\mu}(f)\|_{L^2(\mathbb R^n)}.
\label{integration-s}
\end{align}

Next we consider the $L^2$ norm of weak derivative of $\int^\mu_{-\infty}\mathcal{S}_\lambda(s) d P_{\lambda}f$ with spatial variables. Let $\triangle_x$ the Laplace operator with respect to $x$ variables. It holds that
\begin{align*}
-\triangle_x \int^\mu_{-\infty}\mathcal{S}_\lambda(s) d P_{\lambda}f+V(x) \int^\mu_{-\infty}\mathcal{S}_\lambda(s) d P_{\lambda}f=\int^\mu_{-\infty}\lambda\mathcal{S}_\lambda(s) d P_{\lambda}f.
\end{align*}

Multiplying both sides by  $\int^\mu_{-\infty}\mathcal{S}_\lambda(s) d P_{\lambda}f$ and integrating by parts gives that
\begin{align*}
\| \nabla_x \int^\mu_{-\infty}\mathcal{S}_\lambda(s) d P_{\lambda}f\|^2_{L^2(\mathbb R^n)} 
&\leq \|\int^\mu_0 {\sinh(\sqrt{\lambda}s)}  dP_{\lambda}f\|^2_{L^2(\mathbb R^n)}+\|\int^0_{-\infty} {\sinh(i\sqrt{-\lambda}s)}  dP_{\lambda}f\|^2_{L^2(\mathbb R^n)}
\nonumber \\
&+\|V\|_{L^\infty} \|\int^\mu_{-\infty}\mathcal{S}_\lambda(s) d P_{\lambda}f\|^2_{L^2(\mathbb R^n)} \nonumber \\
&\leq (|\mu|+1) s^2 e^{2 |\mu||s|} \|\int^\mu_{-\infty}   dP_{\lambda}f\|^2_{L^2(\mathbb R^n)} \nonumber \\
& +s^2\|V\|_{L^\infty}  e^{2 |\mu||s|} \|\int^\mu_{-\infty}   dP_{\lambda}f\|^2_{L^2(\mathbb R^n)}\nonumber \\
&\leq C(1+|\mu|)s^2 e^{2|\mu| |s|}  \|\mathbb{I}_{\mu}(f)\|^2_{L^2(\mathbb R^n)},
\end{align*}
where we used the property (\ref{calcu-1}), (\ref{3.15}) and boundedness of $V(x)$.

Integration with respect to $\rho$ yields that
\begin{align}
\| \nabla_x \int^\mu_{-\infty}\mathcal{S}_\lambda(s) dP_{\lambda}f\|^2_{L^2(\mathbb R^n\times (-\rho, \rho))} 
\leq C(|\mu|+1) \rho^3 e^{2|\mu| \rho}  \|\mathbb{I}_{\mu}(f)\|^2_{L^2(\mathbb R^n)}.
\label{integration-x}
\end{align}

Taking (\ref{integration-l2}), (\ref{integration-s}), and (\ref{integration-x}) in account  gives that
\begin{align*}
\|  \int^\mu_{-\infty}\mathcal{S}_\lambda(s) dP_{\lambda}f\|^2_{H^1(\mathbb R^n\times (-\rho, \rho))}\leq C\rho(1+{\rho^2}(1+|\mu|) )
e^{2\rho |\mu|} \|\mathbb{I}_{\mu}(f)\|^2_{L^2(\mathbb R^n)}.
\end{align*}
Therefore, the lemma is arrived.
\end{proof}

We are ready to give the proof of Theorem \ref{th2}. We use the quantitative results on propagation of smallness of gradients in Corollary \ref{cor-1-1}.
\begin{proof}[Proof of Theorem \ref{th2}]
In the following, we adopt these notations.  We still have $Q_j=\Lambda_2(j)$ and $\Omega_j=\Omega \cap \Lambda_1(j)$ for any $j\in \mathbb Z^n$. Denote $\mathcal{E}_j=\Omega_j\times \{0\}\times (-\frac{\rho}{2}, \frac{\rho}{2} )\times (-\frac{\rho}{2}, \frac{\rho}{2} )\times (-\frac{\rho}{2}, \frac{\rho}{2} )$ and
$\mathcal{Q}_j=Q_j\times (-{\rho},{\rho} )\times  (-{\rho},{\rho} )\times (-{\rho},{\rho} )\times (-{\rho},{\rho} ) $. 
As in the proof of Theorem \ref{th1}, we can identify the cubes $\mathcal{Q}_j$ with the standard cube $Q^{n+4}$ with size $2$,  and the measurable set $\mathcal{E}_j$ with the set  $E_1$ in Corollary \ref{cor-1-1}.
Let 
\begin{align}
\hat{f}=e^{ \sqrt{C_0} t} e^{\sqrt{\hat{C}_0} y}e^{i\sqrt{\hat{C}_0} \tau}\int^\mu_{-\infty} d P_{\lambda}f \ \mbox{and} \ \hat{f}_1=e^{ \sqrt{C_0} t} e^{\sqrt{\hat{C}_0} y}e^{i\sqrt{\hat{C}_0} \tau}w^{-1}\int^\mu_{-\infty} d P_{\lambda}f.
\label{hat-f}
\end{align}
Recall that $\bar f$ in (\ref{bar-f}). Note that
$$\nabla \bar f (x,0,t,y,\tau)=\langle 0, \hat{f}_1, 0, 0, 0\rangle.$$ 

%From (\ref{second-ell-1-}), $\bar f$ satisfies %(\ref{elliptic}). It is easy to see that (\ref{metric}) and (\ref{metric-1}) are satisfied.
%\begin{align*}
%\frac{1}{2C_0} \leq \frac{C_0-V(x)}{{\|V\|_{C^{0, 1}}}+C_0}\leq 1, \quad  \|\frac{C_0-V(x)}{{\|V\|_{C^{0, 1}}}+C_0}\|_{C^{0, 1} }\leq 1
%\end{align*}

From (\ref{elli-1-1}) and (\ref{elli-2-1}), we see that the leading coefficients $A(x, s,t, y,\tau)$ in (\ref{second-elli-1-1}) satisfies (\ref{metric-1}) and (\ref{metric}) in $6\mathcal{Q}j$. Thus, (\ref{logunov-mal-2-3}) is applicable with $E_1=\mathcal{E}_j$ and $Q^{n+4}=\mathcal{Q}_j$. Replacing $\Phi_1$ by  $\bar f$  and $\hat{\phi}_1$ by $\hat{f}_1$  in (\ref{logunov-mal-2-3}) gives that
\begin{align}
\|\nabla \bar f \|_{L^2 ( \mathcal{Q}_j)}\leq C (\frac{|\mathcal{Q}_j|}{|\mathcal{E}_j|})^{\frac{\gamma}{2}} \|\hat{f}_1\|^\gamma _{L^2(\mathcal{E}_j)} \|\nabla \bar f \|^{1-\gamma}_{L^2( 4\mathcal{Q}_j)},
\label{three-second}
\end{align}
where $\gamma =\frac{1}{\frac{1}{C_2}\ln \frac{C_1|\mathcal{Q}_j|^{\frac{n+3}{n+4}}}{|\mathcal{E}_j|}+1}.$
It follows from  (\ref{elli-1-1}) that
\begin{align*}
   \|\hat{f}_1\|_{L^2(\mathcal{E}_j)}&\leq C \|\hat{f}\|_{L^2(\mathcal{E}_j)} 
   \leq C e^{(\sqrt{C_0}+\sqrt{\hat{C}_0})\rho} \|\int^\mu_{-\infty} d P_{\lambda}f\|_{L^2(\Omega_j)}.
\end{align*}

Recall $\check{f}$ in (\ref{check-f}).   It follows from (\ref{brain-1-1}) and  (\ref{brain-2-1}) that
\begin{align}
\| \check{f} \|_{H^1 ( {Q}_j\times (-\rho, \rho)}\leq C e^{(\sqrt{C_0}+\sqrt{\hat{C}_0})\gamma\rho} (\frac{|\mathcal{Q}_j|}{|\mathcal{E}_j|})^{\frac{\gamma}{2}} \|\mathbb{I}_{\mu}(f)\|^\gamma _{L^2(\Omega_j)} \|\check{f} \|^{1-\gamma}_{H^1( 4{Q}_j\times (-4\rho, 4\rho))}.
\label{339}
\end{align}
From the value of $\gamma$ and the definition of sensors sets $\Omega$ defined in (\ref{geom-om-2}), we have
$$\frac{|\mathcal{Q}_j|}{|\mathcal{E}_j|}\leq \frac{1}{\delta} $$
and
$$\frac {1}{\gamma}\leq \frac{1}{C_2} \ln C_1 (\frac{1}{\delta}).$$ 
We can choose 
\begin{align}
    {\gamma}= {C_2} \big(\ln C_1 (\frac{1}{\delta})\big)^{-1}
\end{align}
as (\ref{339}) still holds for such a $\gamma$.
Let ${\Omega}=\cup \Omega_j$.
As $\mathbb R^n\subset \cup_{j\in \mathbb{Z}^n} Q_j$, summing up the estimate (\ref{339}) in ${Q}_j\times (-{\rho},{\rho}) $ and using the H\"older's inequality (\ref{holder}) by choosing
\begin{align*}
    a_j=\|\hat{f}\|_{L^2(\Omega_j)}, \quad b_j=\| \check{f} \|_{H^1( 4{Q}_j\times (-4\rho, 4\rho))},
\end{align*}
 we get
\begin{align}
\| \check{f}  \|_{H^1 (\mathbb R^n\times(-{\rho},{\rho} ) ) }\leq Ce^{(\sqrt{C_0}+\sqrt{\hat{C}_0})\gamma\rho} (\frac {1}{\delta})^{\frac{\gamma}{2}}  \|\mathbb{I}_{\mu}(f)\|^\gamma _{L^2({\Omega})} \|\check{f}  \|^{1-\gamma}_{H^1( \mathbb R^n\times(-4{\rho},4{\rho} ))}.
\label{global-three}
\end{align}
The application of  (\ref{compare-in}) in  Lemma \ref{lem-com} yields that
\begin{align*}
C\rho\|\mathbb{I}_{\mu}(f)\|_{L^2(\mathbb R^n)} \leq  C(1+|\mu|)e^{(\sqrt{C_0}+\sqrt{\hat{C}_0})\gamma\rho}  e^{(1-\gamma)(\sqrt{|\mu|}+1)\rho}
  (\frac {1}{\delta})^{\frac{\gamma}{2}}  \|\mathbb{I}_{\mu}(f)\|^\gamma_{L^2(\Omega)} \|\mathbb{I}_{\mu}(f)\|^{1-\gamma}_{L^2(\mathbb R^n)}.
\end{align*}
By fixing $\rho$ as satisfied in Lemma \ref{lem-com} and using bounded assumption of $V(x)$ in { Assumption (B)}, we obtain that
\begin{align*}
\|\mathbb{I}_{\mu}(f)\|_{L^2(\mathbb R^n)}\leq e^{C((\frac{1}{\gamma}-1)(\sqrt{|\mu|}+1)+{1})}(\frac {1}{\delta})^{\frac{1}{2}} 
 \|\mathbb{I}_{\mu}(f)\|_{L^2(\Omega)}.
\end{align*}
Hence, we arrive at
\begin{align*}
\|\mathbb{I}_{\mu}(f)\|_{L^2(\mathbb R^n)}\leq (\frac{1}{\delta})^{C(\sqrt{|\mu|}+1)}\|\mathbb{I}_{\mu}(f)\|_{L^2(\Omega)}.
\end{align*}
Therefore, the proof of the theorem is completed.
\end{proof}

%Something more to think because we used the following for $\sigma=0$. \begin{align} 
%\frac{|\Omega\cap \Lambda_d(k) |}{|\Lambda_d(k)|}\geq \delta^{1+|k|^\sigma}
%\end{align}

\section{Observability inequality}
In this section, we will apply the spectral inequalities to  show  observability inequalities (\ref{aim-res}) on sensor sets for  heat equations in Theorem \ref{th3}. Especially, the observation region is concerned with a subset of positive measure in $(0, T)$. To this end, we need
the following lemma which is on the property of density points for sets of positive measure on $(0, T)$, see e.g. \cite{PW13}.
\begin{lemma}
Let $J$ be a subset of positive measure in $(0, T)$ and $k$ be a density point of $J$. Then for any $\alpha>1$, there exists $k_1\in (k, T)$ such that the sequence defined by
\begin{align*}
k_{m+1}-k= \alpha^{-m}(k_1-k)
\end{align*}
satisfies
\begin{align*}
|J\cap (k_{m+1}, k_m)|\geq \frac{ (k_m-k_{m+1})}{3}.
\end{align*}
\label{densi}
\end{lemma}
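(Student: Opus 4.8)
The plan is to exploit that $k$ is a Lebesgue density point of $J$, so that the proportion of $J$ in a small right-neighborhood $(k,k+r)$ tends to $1$ as $r\to0^+$, and then to choose $k_1$ close enough to $k$ that all the intervals $(k_{m+1},k_m)$ lie inside a neighborhood where that proportion is uniformly close to $1$. (Note $k<T$, since $J\subset(0,T)$ forces the density of $J$ at $T$ to be at most $\tfrac12$.)

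First I would unwind the definition of the sequence. Putting $a_m:=\alpha^{-m}(k_1-k)>0$, the relation $k_{m+1}-k=\alpha^{-m}(k_1-k)$ gives $k_{m+1}-k=a_m$, $k_m-k=\alpha^{-(m-1)}(k_1-k)=\alpha a_m$, and therefore $k_m-k_{m+1}=(\alpha-1)a_m$; in particular $(k_m)_{m\geq1}$ strictly decreases to $k$ and $k_m-k\leq k_1-k$ for every $m\geq1$. Since $k$ is a density point of $J$ we have $\lim_{r\to0^+}|J\cap(k,k+r)|/r=1$ (if density is meant in the two-sided sense, $|J\cap(k,k+r)|\geq|J\cap(k-r,k+r)|-r$ already forces the right-sided lower density to $1$). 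Fix $\eps:=\frac{2(\alpha-1)}{3\alpha}\in(0,1)$ and pick $r_0>0$ with $|J\cap(k,k+r)|\geq(1-\eps)r$ for all $0<r\leq r_0$; then set $k_1:=k+\min\{r_0,(T-k)/2\}\in(k,T)$, so that $k_1-k\leq r_0$ and hence $k_m-k\leq r_0$ and $k_m\in(k,T)$ for every $m\geq1$.

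For each $m\geq1$ I would then estimate, using the density lower bound on $(k,k_m)$ and the trivial bound $|J\cap(k,k_{m+1})|\leq k_{m+1}-k$,
\begin{align*}
|J\cap(k_{m+1},k_m)| &= |J\cap(k,k_m)|-|J\cap(k,k_{m+1})| \\
&\geq (1-\eps)(k_m-k)-(k_{m+1}-k) = \bigl((1-\eps)\alpha-1\bigr)a_m.
\end{align*}
By the choice of $\eps$ one has $(1-\eps)\alpha-1=(\alpha-1)-\eps\alpha\geq\frac{\alpha-1}{3}$, whence $|J\cap(k_{m+1},k_m)|\geq\frac{(\alpha-1)a_m}{3}=\frac{k_m-k_{m+1}}{3}$, which is exactly the assertion.

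I do not expect a genuine obstacle here: the argument is elementary once the density point is invoked. The only two points that need a little care are making the right-sided density statement precise (handled above) and choosing $k_1$ so that it simultaneously stays below $T$ and within distance $r_0$ of $k$ (handled by the explicit formula), both of which are routine.
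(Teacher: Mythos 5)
Your proof is correct and complete. Note that the paper itself does not supply a proof of this lemma; it simply cites \cite{PW13}, and the argument there is essentially the same density-point computation you carried out, namely: pass from the two-sided to the one-sided density, choose $k_1$ close enough to $k$ so that the relative measure of $J$ on every initial segment $(k,k_m)$ exceeds a threshold $1-\eps$ with $\eps$ tuned to $\alpha$, and then subtract the trivial upper bound on $(k,k_{m+1})$. Your explicit choice $\eps=\frac{2(\alpha-1)}{3\alpha}$ gives exactly the factor $\frac{1}{3}$ in the statement, the side condition $k_1\in(k,T)$ is handled by the cap $(T-k)/2$, and the preliminary remark that $k<T$ (since the density of $J\subset(0,T)$ at $T$ is at most $\tfrac12$) closes the only potential loose end. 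No gap.
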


Next we deduce the sharp observability inequality in Theorem \ref{th3} from spectral inequality (\ref{aim-res}). We follow the proof in \cite{AEWZ14}, which in turn relies on the ideas in \cite{M10}, interpolation inequalities and telescopic series method in \cite{PW13}.

\begin{proof}[Proof of Theorem \ref{th3}:]
Let $\tilde{H}=\triangle-V(x)$ and $\tilde{P}_\lambda$ be the projection onto the space generated by $\{\phi_k: \lambda_k\leq \lambda\}$, where $\phi_k$ is the eigenfunction corresponding to $\lambda_k$ associated with $\tilde{H}$. 
Then $\tilde{P}^\lambda=I-\tilde{P}_\lambda$. Note that $$e^{t\tilde{H}} f=\sum a_i e^{-\lambda_i t} \phi_i(x),$$ where $a_i=\int_{\mathbb R^n} f\phi_i$. Thanks to the spectral inequality (\ref{aim-res}) in Theorem \ref{th1},
we have
\begin{align}
\| e^{t\tilde{H}} f\|_{L^2(\mathbb R^n)}&\leq \| e^{t\tilde{H}} \tilde{P}_\lambda f\|_{L^2(\mathbb R^n)}+ \| e^{t\tilde{H}} \tilde{P}^\lambda f\|_{L^2(\mathbb R^n)} \nonumber \\
&\leq (\frac{1}{\delta})^{C\lambda^{\frac{\sigma}{\beta_1}+\frac{1}{2}}}   \| e^{t\tilde{H}} \tilde{P}_\lambda  f \|_{L^2(\Omega)} + \| e^{t\tilde{H}} \tilde{P}^\lambda f\|_{L^2(\mathbb R^n)}\nonumber \\
&\leq (\frac{1}{\delta})^{C\lambda^{\frac{\sigma}{\beta_1}+\frac{1}{2}}} ( \| e^{t\tilde{H}}  f\|_{L^2(\Omega)}+ \|e^{t\tilde{H}} \tilde{P}^\lambda  f\|_{L^2(\mathbb R^n)} ) + \| e^{t\tilde{H}} \tilde{P}^\lambda f \|_{L^2(\mathbb R^n)}  \nonumber \\
&\leq (\frac{1}{\delta})^{C\lambda^{\frac{\sigma}{\beta_1}+\frac{1}{2}}}   ( \| e^{t\tilde{H}}  f\|_{L^2(\Omega)}+ \|e^{t\tilde{H}} \tilde{P}^\lambda  f\|_{L^2(\mathbb R^n)} ).\nonumber
\end{align}
With $t>s$, we have
\begin{align}
\| e^{t\tilde{H}} f\|_{L^2(\mathbb R^n)} &\leq(\frac{1}{\delta})^{C\lambda^{\frac{\sigma}{\beta_1}+\frac{1}{2}}}  ( \| e^{t\tilde{H}}  f\|_{L^2(\Omega)}+ e^{-\lambda(t-s)} \|e^{s\tilde{H}} \tilde{P}^\lambda  f\|_{L^2(\mathbb R^n)} )
\label{star-star}
\end{align}
for $\lambda\geq 0$.
We choose some constant $0<\tau<1$, which to be determined.  We can show that
\begin{align*}
\sup_{\lambda \geq 0}  e^{C\lambda^{\frac{\sigma}{\beta_1}+\frac{1}{2}}\ln \frac{1}{\delta}- \tau \lambda(t-s)} = e^{C_2(\tau(t-s))^{\frac{\frac{\sigma}{\beta_1}+\frac{1}{2}}{ {\frac{\sigma}{\beta_1}-\frac{1}{2}}}} (\ln \frac{1}{\delta})^{ \frac{-1} { \frac{\sigma}{\beta_1}-\frac{1}{2}}} },
\end{align*}
where the supremum is achieved by $\lambda=(\frac{\tau(t-s)  }{C(\frac{\sigma}{\beta_1}+\frac{1}{2}) \ln \frac{1}{\delta}})^{ \frac{1} { \frac{\sigma}{\beta_1}-\frac{1}{2}} }$ and $C_2$ depends on $\beta_1$, $\sigma$. $c_1$, $c_2$ and $n$.
Denote
\begin{align*}
\sigma_1= \frac{\sigma}{\beta_1}+\frac{1}{2}, \quad \quad \sigma_2= \frac{1}{2}-\frac{\sigma}{\beta_1}.
\end{align*}
Note that $\sigma_1>0$ and $\sigma_2>0$.
It follows from (\ref{star-star}) that
\begin{align*}
\| e^{t\tilde{H}} f\|_{L^2(\mathbb R^n)}&\leq e^{C_2(\tau(t-s))^{\frac{-\sigma_1}{\sigma_2} }( \ln \frac{1}{\delta})^{ \frac{1} {\sigma_2} }} (  e^{\tau \lambda (t-s)} \| e^{t\tilde{H}}  f\|_{L^2(\Omega)}+ e^{-(1-\tau)\lambda(t-s)} \|e^{s\tilde{H}}   f\|_{L^2(\mathbb R^n)}).
\end{align*}
We aim to minimize the right hand side of the last inequality. Since $\lambda$ is some positive free parameter, we choose $\lambda$ such that
\begin{align*}
 e^{\lambda(t-s)}=\frac{ \| e^{s\tilde{H}} f\|_{L^2(\mathbb R^n)}} {  \| e^{t\tilde{H}} f\|_{L^2(\Omega)}}.
\end{align*}
Thus, we obtain
\begin{align*}
\| e^{t\tilde{H}} f\|_{L^2(\mathbb R^n)}\leq 2 e^{C_2(\tau(t-s))^{\frac{-\sigma_1}{\sigma_2} }(\ln \frac{1}{\delta})^{ \frac{1} {\sigma_2} }} \| e^{t\tilde{H}} f\|^{1-\tau }_{L^2(\Omega)}  \| e^{s\tilde{H}} f\|^{\tau}_{L^2(\mathbb R^n)}.
\end{align*}
In particular, if $s=0$, we have
\begin{align}
\| u(x,t)\|_{L^2(\mathbb R^n)}\leq 2e^{C_2(\tau t)^{\frac{-\sigma_1}{\sigma_2} }( \ln \frac{1}{\delta})^{ \frac{1} {\sigma_2} }} \| u(x,t)\|^{1-\tau }_{L^2(\Omega)}  \| u(x,0)\|^{\tau}_{L^2(\mathbb R^n)}.
\label{heat-u}
\end{align}
This is a quantitative unique continuation result for the heat equations (\ref{nonlinear-one}). It is well-known that $u(x,t)=0$ in $\mathbb R^n\times (0, T)$
if $u(x,T)=0$ or  $u(x,0)=0$ in $\mathbb R^n$. From (\ref{heat-u}), we learn that $u(x,t)=0$ in $\mathbb R^n\times (0, T)$
if $u(x,T)=0$ in $\Omega$, where $\Omega$ is given in (\ref{geom-om}).

We will consider $t\in [t_1, t_2]$. Choosing $s=t_1$, by energy estimates, we get
\begin{align*}
\| e^{t_2\tilde{H}} f\|_{L^2(\mathbb R^n)}&\leq e^{T\|V^-\|_{\infty}} \| e^{t\tilde{H}} f\|_{L^2(\mathbb R^n)} \nonumber \\
&\leq  C e^{T\|V^-\|_{\infty}} e^{C_2(\tau(t-t_1))^{\frac{-\sigma_1}{\sigma_2} }(\ln \frac{1}{\delta})^{ \frac{1} {\sigma_2} }}\| e^{t\tilde{H}} f\|^{1-\tau}_{L^2(\Omega)}  \| e^{t_1\tilde{H}} f\|^{\tau}_{L^2(\mathbb R^n)}.
\end{align*}
Let $\alpha=2$, $t_1=k_{m+1}$ and $t_2=k_m$. From Lemma \ref{densi}, we get that  $|J\cap  (t_1,  t_2)|\geq \frac{t_2-t_1}{3}$. Then it holds that
\begin{align*}
 |J\cap  (t_1+\frac{t_2-t_1}{4}, t_2)|\geq \frac{t_2-t_1}{12}.
 \end{align*}
Integrating the above inequality for $t\in J\cap (t_1+\frac{t_2-t_1}{4}, t_2)$ and using H\"older's inequality yields that
\begin{align*}
\| e^{t_2\tilde{H}} f\|_{L^2(\mathbb R^n)}
&\leq  C e^{T\|V^-\|_{\infty}} e^{C_2(\frac{1}{4}\tau(t_2-t_1))^{\frac{-\sigma_1}{\sigma_2} }(\ln \frac{1}{\delta})^{ \frac{1} {\sigma_2} }}
\nonumber \\
&\times ( \int^{t_2}_{t_1+\frac{t_2-t_1}{4}}\mathrm{1}_J(t) \| e^{t\tilde{H}} f\|_{L^2(\Omega)}\, dt)^{{1-\tau}}  \| e^{t_1\tilde{H}} f\|^{\tau}_{L^2(\mathbb R^n)}.
\end{align*}
Let $a$ be some positive constant, which is to be determined. By Young's inequality, $AB\leq (1-\tau)A^{\frac{1}{1-\tau}}+\tau B^{\frac{1}{\tau}}$, we get
\begin{align*}
e^{-a (t_2-t_1)^{\frac{-\sigma_1}{\sigma_2} }( \ln \frac{1}{\delta})^{ \frac{1} {\sigma_2}} }\| e^{t_2\tilde{H}} f\|_{L^2(\mathbb R^n)}&\leq C e^{T\|V^-\|_{\infty}} e^{- \frac a 2 (t_2-t_1)^{\frac{-\sigma_1}{\sigma_2} } ( \ln \frac{1}{\delta})^{ \frac{1} {\sigma_2}} }\nonumber \\
&\times 
( \int^{t_2}_{t_1+\frac{t_2-t_1}{4}} \mathrm{1}_E(t)\| e^{t\tilde{H}} f\|_{L^2(\Omega)}\, dt)^{{1-\tau}}   \nonumber \\ &\times  e^{ \big(C_2 (\frac{\tau}{4})^{\frac{-\sigma_1}{\sigma_2} }  -\frac a 2 \big) (t_2-t_1)^{\frac{-\sigma_1}{\sigma_2} }(\ln \frac{1}{\delta})^{ \frac{1} {\sigma_2}} }   \| e^{t_1\tilde{H}} f\|^{\tau}_{L^2(\mathbb R^n)}\nonumber \\
&\leq  C (1-\tau) e^{\frac{T\|V^-\|_{\infty}}{1-\tau}} e^{\frac{-a/2}{1-\tau} (t_2-t_1)^{\frac{-\sigma_1}{\sigma_2} }( \ln \frac{1}{\delta})^{ \frac{1} {\sigma_2}} } \nonumber \\
&\times ( \int^{t_2}_{t_1+\frac{t_2-t_1}{4}}\mathrm{1}_J(t) \| e^{t\tilde{H}} f\|_{L^2(\Omega)}\, d t) \nonumber \\ & + \tau e^{\frac{1}{\tau} \big(C_2 (\frac{\tau}{4})^{\frac{-\sigma_1}{\sigma_2}} - \frac a 2 \big) (t_2-t_1)^{\frac{-\sigma_1}{\sigma_2} }( \ln \frac{1}{\delta})^{ \frac{1} {\sigma_2}}  }  \| e^{t_1\tilde{H}} f\|_{L^2(\mathbb R^n)}.
\end{align*} 
Now we choose the constants $a$ and $\tau$ such that
\begin{align*}
\frac{a}{2}\geq d_0 \quad \mbox{and} \quad  \frac{1}{\tau} \big(C_2 (\frac{\tau}{4})^{\frac{-\sigma_1}{\sigma_2}} -\frac a 2 \big)     >2^{\frac{\sigma_1}{\sigma_2}} a
\end{align*}
for some positive constant $d_0$.
Hence, we have
\begin{align*}
e^{-a (t_2-t_1)^{\frac{-\sigma_1}{\sigma_2} }(\ln \frac{1}{\delta})^{ \frac{1} {\sigma_2}} }\| e^{t_2\tilde{H}} f\|_{L^2(\mathbb R^n)}&\leq C e^{C_1 T\|V^-\|_{\infty}} e^{-d_0 (t_2-t_1)^{\frac{-\sigma_1}{\sigma_2} }(\ln \frac{1}{\delta})^{ \frac{1} {\sigma_2}} } \nonumber \\
&\times\int^{t_2}_{t_1+\frac{t_2-t_1}{4}}  \mathrm{1}_J(t) \| e^{t\tilde{H}} f\|_{L^2(\Omega)}\, dt \nonumber \\& +e^{-a (\frac{t_2-t_1}{2})^{\frac{-\sigma_1}{\sigma_2} }( \ln \frac{1}{\delta})^{ \frac{1} {\sigma_2}} }\| e^{t_1\tilde{H}} f\|_{L^2(\mathbb R^n)},
\end{align*}
where $C_1$ depends on the constants in the Assumption (A).
Furthermore,
\begin{align*}
e^{-a (t_2-t_1)^{\frac{-\sigma_1}{\sigma_2} }( \ln \frac{1}{\delta})^{ \frac{1} {\sigma_2}} }\| e^{t_2\tilde{H}} f\|_{L^2(\mathbb R^n)}&-e^{-a (\frac{t_2-t_1}{2})^{\frac{-\sigma_1}{\sigma_2} }( \ln \frac{1}{\delta})^{ \frac{1} {\sigma_2}} }\| e^{t_1\tilde{H}} f\|_{L^2(\mathbb R^n)} \nonumber \\&\leq C e^{C_1T\|V^-\|_{\infty}} 
\int^{t_2}_{t_1+\frac{t_2-t_1}{4}}  \mathrm{1}_J(t) \| e^{t\tilde{H}} f\|_{L^2(\Omega)}\, dt.
\end{align*}
 Recall that we have chosen $\alpha=2$, $t_1=k_{m+1}$ and $t_2=k_{m}$. Observe that
\begin{align*}
\frac{1}{k_{m+1}-k_{m+2}}=\frac{2}{k_{m}-k_{m+1}}.
\end{align*}

We have
\begin{align*}
e^{-a (k_m-k_{m+1})^{\frac{-\sigma_1}{\sigma_2} }(\ln \frac{1}{\delta})^{ \frac{1} {\sigma_2}} }\| e^{k_m\tilde{H}} f\|_{L^2(\mathbb R^n)} &-e^{-a (k_{m+1}-k_{m+2})^{\frac{-\sigma_1}{\sigma_2} }( \ln \frac{1}{\delta})^{ \frac{1} {\sigma_2}} }\| e^{k_{m+1}\tilde{H}} f\|_{L^2(\mathbb R^n)} \nonumber \\ & \leq C e^{C_1T\|V^-\|_{\infty}} 
\int^{k_m}_{k_{m+1}} \mathrm{1}_J(t)\| e^{t\tilde{H}} f\|_{L^2(\Omega)}\, dt.
\end{align*}
 As $m\to \infty$,  it holds that $$e^{-a (k_m-k_{m+1})^{\frac{-\sigma_1}{\sigma_2} }( \ln \frac{1}{\delta})^{ \frac{1} {\sigma_2}} }\to 0.$$
 We  sum up the above telescopic series from $m=1$ to $\infty$ to have
\begin{align*}
e^{-a (k_1-k_{2})^{\frac{-\sigma_1}{\sigma_2} }( \ln \frac{1}{\delta})^{ \frac{1} {\sigma_2}} }\| e^{k_1\tilde{H}} f\|_{L^2(\mathbb R^n)}\leq   C e^{C_1T\|V^-\|_{\infty}}  \int^{k_1}_{k} \mathrm{1}_J(t)\| e^{t\tilde{H}} f\|_{L^2(\Omega)}\, dt.
\end{align*}
Then 
\begin{align*}
\| e^{T\tilde{H}} f\|_{L^2(\mathbb R^n)}\leq   C(J) e^{C_1T\|V^-\|_{\infty}}   e^{ C(J) ( \ln \frac{1}{\delta})^{ \frac{1} {\sigma_2}} }
\int_{J} \| e^{t\tilde{H}} f\|_{L^2(\Omega)}\, dt.
\end{align*}
Let $f=u_0$ in (\ref{nonlinear-one}). We arrive at
\begin{align*}
\| u(x, T)\|_{L^2(\mathbb R^n)}\leq  C(J) e^{C_1T\|V^-\|_{\infty}}   e^{ C(J) (\ln \frac{1}{\delta})^{ \frac{1} {\sigma_2}} }
(\int_J \int_\Omega u^2 \, dx dt)^\frac{1}{2}.
\end{align*}
Therefore, we finish the proof of Theorem \ref{th3}.
\end{proof}

\end{document}